\documentclass[11pt,reqno]{amsart}
\usepackage[T1]{fontenc}
\usepackage{lmodern}
\usepackage{amsmath,amssymb,mathtools,mathrsfs}
\usepackage[margin=1in]{geometry}
\usepackage{microtype}
\usepackage{needspace}
\usepackage{placeins}
\usepackage{tikz-cd}
\usetikzlibrary{arrows.meta,calc,fit}
\tikzset{
 cell/.style={circle,draw,fill=white,inner sep=0pt,minimum size=4.5pt},
 attachment/.style={thin},
 cellmap/.style={-{Stealth[length=1.8mm]},semithick},
 cellframe/.style={draw,rounded corners=2pt,thin,inner sep=4pt}
}
\usepackage{xcolor}
\usepackage[colorlinks=true,linkcolor=blue!45!black,citecolor=blue!45!black,urlcolor=blue!45!black]{hyperref}
\newtheorem{theorem}{Theorem}[section]
\newtheorem{maintheorem}{Theorem}

\AddToHook{env/maintheorem/before}{\stepcounter{theorem}}
\newtheorem{proposition}[theorem]{Proposition}
\newtheorem{conjecture}[theorem]{Conjecture}
\newtheorem{lemma}[theorem]{Lemma}
\newtheorem{corollary}[theorem]{Corollary}
\theoremstyle{definition}
\newtheorem{definition}[theorem]{Definition}

\theoremstyle{remark}
\newtheorem{remark}[theorem]{Remark}
\newtheorem*{remark*}{Remark}
\newcommand{\Z}{\mathbb Z}

\newcommand{\F}{\mathbb F}

\newcommand{\CP}{\mathbb {CP}}

\newcommand{\Sp}{\mathrm {Sp}}
\newcommand{\D}{\mathrm D}
\newcommand{\Id}{\mathrm{id}}
\newcommand{\Res}{\operatorname{Res}}
\newcommand{\Ind}{\operatorname{Ind}}
\newcommand{\Hom}{\operatorname{Hom}}

\newcommand{\cofib}{\operatorname{cofib}}
\newcommand{\Cof}{\operatorname{Cof}}

\newcommand{\RO}{\operatorname{RO}}
\newcommand{\Fun}{\operatorname{Fun}}

\numberwithin{equation}{section}
\hypersetup{pdftitle={Equivariant generating hypotheses for finite groups},pdfauthor={Sihao Ma, XiaoLin Danny Shi, Zhouli Xu, Shangjie Zhang},pdfsubject={Ghosts for every nontrivial finite group}}
\title[Equivariant generating hypotheses]{Equivariant generating hypotheses for finite groups}
\author{Sihao Ma}
\address{Department of Mathematics, UCLA, Los Angeles, CA 90095-1555, USA}
\email{masihao@math.ucla.edu}
\author{XiaoLin Danny Shi}
\address{Department of Mathematics, University of Washington, Seattle, WA 98195, USA}
\email{dannyshi@uw.edu}
\author{Zhouli Xu}
\address{Department of Mathematics, UCLA, Los Angeles, CA 90095-1555, USA}
\email{xuzhouli@ucla.edu}
\author{Shangjie Zhang}
\address{Department of Mathematics, University of Washington, Seattle, WA 98195, USA}
\email{sjzh@uw.edu}
\date{}
\subjclass[2020]{Primary 55P91, Secondary 55P42, 55Q10}
\keywords{Equivariant generating hypothesis, finite equivariant spectra, ghosts}
\begin{document}
\begin{abstract}
We disprove Bohmann's equivariant generating hypothesis for every
nontrivial finite group $G$, even when all $\RO(H)$-graded
homotopy groups at every subgroup $H$ are tested. For each fixed $G$
and prime $p$ dividing $|G|$, we construct ghosts on finite
$G$-spectra with arbitrarily long nonzero composition powers.
We also prove that the homotopy-module functors are nonfull and
construct non-equivalent finite $G$-spectra with isomorphic full
homotopy modules.

Our constructions use circle power maps and cyclic permutations of
products of projective spaces, and are motivated by Ma--Xu's
categorical method in the motivic setting and the projective-space
power maps.
\end{abstract}
\maketitle

\begingroup
\hypersetup{hidelinks}
\fontsize{10}{11}\selectfont
\tableofcontents
\endgroup

\section{Introduction}
Freyd's generating hypothesis asks whether stable homotopy groups detect maps between finite spectra~\cite{Freyd}.

\begin{conjecture}[Freyd's classical generating hypothesis]\label{conj:freyd}
Let $f:X\to Y$ be a map between finite spectra. If
\[
 \pi_n(f):\pi_nX\longrightarrow\pi_nY
\]
is zero for every $n\in\Z$, then $f=0$ in the stable homotopy category.
\end{conjecture}

The conjecture remains open. Freyd proved that the conjectured
faithfulness would imply fullness~\cite[Proposition~9.7]{Freyd}:
for finite $X$ and $Y$, the natural map
\[
 [X,Y]\longrightarrow\Hom_{\pi_*S}(\pi_*X,\pi_*Y)
\]
would be an isomorphism, where module homomorphisms preserve degrees.
Graded homotopy modules would then recover both maps and stable
homotopy types of finite spectra.

Bohmann formulated the equivariant generating hypothesis in 2010,
using integer-graded homotopy groups at every subgroup
\cite[Conjecture~1.2]{Bohmann}. We disprove it for every nontrivial
finite group. The failure persists after testing all
representation-graded homotopy groups at every subgroup, and the
resulting ghost ideal is nonnilpotent for each fixed nontrivial
finite group. We also prove that the homotopy-module functors are
nonfull and construct non-equivalent finite $G$-spectra with
isomorphic full homotopy modules.

\Needspace{8\baselineskip}
Fix a finite group $G$. We work with $G$-spectra indexed on a
complete universe~\cite[Chapters~I--II]{LMS}. Finite $G$-spectra are
the compact objects in this
category. They form the thick
subcategory generated by the orbit spectra $\Sigma^\infty_G G/H_+$,
$H\leq G$~\cite[Section~2, \textup{(A)}]{BS}.
Write $[X,Y]^G$ for stable maps and put
\[
 \pi_W^H(X)=[S^W,\Res_H^G X]^H
 \qquad(H\leq G,\ W\in\RO(H)).
\]
The $H$-Mackey functor $\underline{\pi}_W^H(X)$ is defined on orbits by
\[
 \underline{\pi}_W^H(X)(H/L)
 = [S^{\Res_L^H W},\Res_L^G X]^L
 \qquad(L\leq H).
\]
We use $*$ for integer degrees and $\star$ for representation degrees:
$\underline{\pi}_*X$ denotes the integer-graded $G$-Mackey functor,
and $\underline{\pi}_\star^H(X)$ the $\RO(H)$-graded family of
$H$-Mackey functors. We consider the following three versions of the
generating hypothesis on the compact category.

\Needspace{8\baselineskip}
\begin{conjecture}[Equivariant generating hypotheses for finite groups]\label{conj:equivariant}
Let $G$ be finite and let $f:X\to Y$ be a map between finite $G$-spectra. Each of the following formulations asserts that its stated vanishing condition implies $f=0$.
\begin{enumerate}
\item\label{item:integer}\textup{\textbf{Bohmann's integer-graded Mackey GH.}}
The condition is
\[
 \underline{\pi}_*(f)=0.
\]
\item\label{item:ro}\textup{\textbf{$\RO(G)$-graded Mackey GH.}}
The condition is
\[
 \underline{\pi}_\star^G(f)=0.
\]
\item\label{item:all}\textup{\textbf{All-subgroup representation-orbit GH.}}
The condition is that, for every $H\leq G$,
\[
 \underline{\pi}_\star^H(f)=0.
\]
\end{enumerate}
\end{conjecture}

By induction--restriction adjunction, these conditions test maps from
$G/H_+\wedge S^n$, $G/H_+\wedge S^V$, and $G_+\wedge_H S^W$,
respectively, where $H\leq G$, $n\in\Z$, $V\in\RO(G)$, and
$W\in\RO(H)$. All three families consist of compact objects and
generate the finite $G$-spectra as a thick subcategory. They increase from
parts~\textup{(\ref{item:integer})} to~\textup{(\ref{item:all})}, since ordinary suspensions
are representation suspensions and
\[
 G/H_+\wedge S^V\simeq G_+\wedge_H S^{\Res_H^G V}.
\]
A counterexample to part~\textup{(\ref{item:all})} therefore disproves all three. A map satisfying the vanishing condition for one of these families is called a \emph{ghost} for that family~\cite[Definition~2.5 and Section~7.1]{Christensen}. For part~\textup{(\ref{item:all})}, we use the term \emph{representation-orbit ghost}.

For finite $G$, Greenlees--May~\cite[Appendix~A, Theorem~A.4]{GM}
proved faithfulness and fullness of the integer-graded homotopy
Mackey functor on all rational $G$-spectra. All three hypotheses therefore hold rationally.
Bohmann~\cite[Theorem~1.7]{Bohmann} gave a counterexample on compact
rational $S^1$-equivariant spectra in the sense of
Conjecture~\ref{conj:equivariant}\textup{(\ref{item:integer})}.

For each of the three detector families, the homotopy-module
functor records homotopy groups together with the operations induced
by stable maps between the detecting spectra.

\noindent\textbf{Convention.} Unless otherwise specified, for the rest of
this paper $G$ is a finite group with $|G|\ne1$ and $p$ is a prime
dividing $|G|$.

Bohmann~\cite[Theorem~1.5]{Bohmann} proved that integer-graded Mackey
GH for finite $G$ implies fullness over the graded sphere Green functor.
In this paper, we prove nonfaithfulness and nonfullness for all three
homotopy-module functors.

\begin{maintheorem}[Nonfaithfulness and nonfullness]\label{thm:main-results}
\leavevmode
\begin{enumerate}
\item\label{item:main-failure}
All three assertions in Conjecture~\ref{conj:equivariant} are false,
both integrally and in the category of finite $p$-local $G$-spectra.
\item\label{item:main-nonfullness}
{All three homotopy-module functors on finite $G$-spectra are nonfull,
integrally and $p$-locally.}
\end{enumerate}
\end{maintheorem}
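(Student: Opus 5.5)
The plan is to handle part~(\ref{item:main-failure}) first: construct, for every nontrivial finite $G$, a nonzero representation-orbit ghost between finite $G$-spectra. Nonfullness in part~(\ref{item:main-nonfullness}) should then follow from ghosts by a Freyd-type formal argument.

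\textbf{Reduction to $C_p$.} Choose an element of order $p$ and let $C_p\leq G$ be the subgroup it generates. Suppose $f:X\to Y$ is a nonzero representation-orbit ghost of finite $C_p$-spectra. I claim $G_+\wedge_{C_p}f$ is a nonzero representation-orbit ghost for $G$.
\begin{enumerate}
\item By adjunction, a map $G_+\wedge_K S^W\to G_+\wedge_{C_p}X$ is a map $S^W\to\Res^G_K G_+\wedge_{C_p}X$.
\item The double coset formula writes $\Res^G_K G_+\wedge_{C_p}X$ as a finite wedge of terms $K_+\wedge_L c_g^*\Res X$, with $L=K\cap gC_pg^{-1}$.
\item Since induction agrees with coinduction (Wirthm\"uller), each summand is tested by maps $S^{\Res_L W}\to \Res_L c_g^*X$. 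Here $L$ is trivial or a conjugate of $C_p$, so the ghost hypothesis on $f$ kills every such test.
\item For nonvanishing, note that $\Res^G_{C_p}G_+\wedge_{C_p}f$ contains $f$ as a wedge summand (the identity double coset).
\end{enumerate}
The same argument works after $p$-localization, and it preserves nonzero composition powers. So everything reduces to $G=C_p$.

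\textbf{The $C_p$ construction.} For $C_p$, a map is a representation-orbit ghost when two things vanish: its underlying map on $\pi_*$ (testing against $C_{p+}\wedge S^W$), and its effect on $\pi^{C_p}_V$ for all $V\in\RO(C_p)$.
\begin{enumerate}
\item Following the abstract, I will take $X$ and $Y$ to be finite $C_p$-spectra built from $(\CP^n)^{\times p}$ with $C_p$ cyclically permuting factors. A finite skeleton or Thom-type quotient keeps them compact. On the geometric fixed points one sees the diagonal $\CP^n$.
\item The map $f$ is built from degree-$d$ circle power maps $S^1\to S^1$, $z\mapsto z^d$. These induce the $d$-th power maps on $\CP^\infty$, hence on the permuted product. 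I will choose $d$ as a power of $p$ so that the underlying map is multiplication by $p^k$ on cells. I then subtract a transfer/norm correction, so that the underlying map is null and the map is zero on all $\pi^{C_p}_V$.
\item Ghostness would be checked on the cellular filtration by $C_p$-cells $S^V$ and $C_{p+}\wedge S^n$, using that the power map acts by $d^i$ on the $2i$-cells.
\item Nonvanishing, and nonvanishing of the composition powers $f^{\circ m}$, would be detected by a finer invariant. I expect to use geometric fixed points or the Tate construction, where the diagonal $\CP^n$ carries a Steenrod operation (e.g.\ $P^1$ on $\CP^p$) that the power map cannot kill.
\end{enumerate}
Doing this both integrally and $p$-locally gives part~(\ref{item:main-failure}).

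\textbf{Nonfullness.} Take composable ghosts $h:X\to Y$ and $g:Y\to Z$ with $gh\neq0$; these exist because the $C_p$ construction gives long nonzero composites. Let $C=\mathrm{Cof}(h)$.
\begin{enumerate}
\item Since $h$ is a ghost, the homotopy modules (for any of the three families) sit in a short exact sequence
\[
0\to\pi Y\to\pi C\to\pi\Sigma X\to 0.
\]
\item I plan to arrange that $\pi\Sigma X$ is projective over the detector ring. Then this sequence splits algebraically, giving a retraction $r:\pi C\to\pi Y$.
\item Suppose the functor were full. Then $r$ is realized by some $\rho:C\to Y$ restricting to $\mathrm{id}_Y$ up to a ghost $e$.
\item From this I would derive that $g$ extends over $C$ modulo longer ghost composites. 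Iterating against the nonzero higher powers should force $gh=0$, a contradiction.
\end{enumerate}
If this formal route fails, the fallback is to produce a module homomorphism directly between the explicit homotopy modules of the $\CP^n$-spectra that no map realizes, detected again on geometric fixed points.

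\textbf{Main obstacle.} I expect the hardest step to be the $C_p$ construction: proving simultaneously that $f$ vanishes on all $\RO(C_p)$-graded groups and that its iterated composites do not vanish. This needs fairly explicit control of $\pi^{C_p}_\star$ of the permuted products of projective spaces.
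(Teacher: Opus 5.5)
Your reduction to $C_p$ by induction is correct and is essentially Lemma~\ref{lem:induction}. The gap is the $C_p$ step itself, on both the ghost side and the detection side.

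\textbf{Constructing the ghost.} You never actually define $f$. The maps $B(z\mapsto z^d)$ act by $d^i$ on $H_{2i}$ and by $1$ on the bottom cell, not by a fixed $p^k$. The phrase ``subtract a transfer/norm correction so that the map is zero on all $\pi^{C_p}_V$'' restates the goal. A cell-by-cell check also cannot establish ghostness: vanishing on filtration quotients says nothing about classes in $[S^V,X]^{C_p}$ that are extensions across cells. The missing idea is naturality (Lemma~\ref{lem:central-ghost}). The operation $\Psi_X=(p-[C_p/e])(1-g_X)$ is natural in $X$. It vanishes on every $S^V$: $p-[C_p/e]$ has underlying degree $0$, $1-g$ has geometric fixed-point degree $0$, and the mark homomorphism is injective. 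Hence $\Psi_X a=a\Psi_{S^V\wedge Y}=0$ for every $a\colon S^V\wedge Y\to X$, which gives ghostness on all $X$ at once.

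\textbf{Detecting nonvanishing.} Your detection plan is more seriously flawed. Let $f$ be any representation-orbit ghost between finite $C_p$-spectra. Then $\pi_*\Phi^{C_p}(f)=\operatorname{colim}_n\pi^{C_p}_{*-n\lambda}(f)=0$. By the Segal conjecture, $f^{tC_p}\simeq(\Phi^{C_p}f)^\wedge_p$. The underlying map is also zero on $\pi_*$. So showing any of these is nonzero, by Steenrod operations or otherwise, would disprove Freyd's classical generating hypothesis. In fact $\Phi^{C_p}\Psi_X=0$ identically. The paper detects nonvanishing through the coherent $C_p$-action, not through the homotopy of any single spectrum.
\begin{itemize}
\item For the four-cell $f_p=1-q_{p+1}$ on $\Cof(a_\lambda)\wedge S/p$ ($p$ odd), the chain map with $\tau_1=-N$ is not null-homotopic over $\F_p[C_p]$ (Proposition~\ref{lem:chains}).
\item For $\Psi$ on $\Sigma^\infty_+(BS^1)^p$, the paper restricts along an equivariant $C_{p^2}\hookrightarrow(S^1)^p$. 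It then uses a $C_p$-equivariant May--Snaith--Zelewski splitting with the Segal conjecture to get $p^{m+n}(1-g)^n\ne0$ in $\Z_p[C_p]$. A Milnor $\varprojlim^1$ argument passes to $(\CP^k)^p$.
\end{itemize}

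\textbf{Nonfullness.} Your step~2 here is self-defeating. Write $u\colon X\to Y$ and $v\colon Y\to Z$ for your ghosts, with $vu\ne0$. If $\pi(\Sigma X)$ is projective, then $\Sigma X$ is a retract of a sum of detectors: realize and split the idempotent on a free cover, and use that $\pi$ reflects isomorphisms. Then every ghost out of $\Sigma X$ vanishes, including $\Sigma u$.

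The fix is to cone off the double ghost $w=vu$, not $u$. Map $X\xrightarrow{u}Y\xrightarrow{j}D\xrightarrow{q}\Sigma X$ to $X\xrightarrow{w}Z\xrightarrow{i}E\xrightarrow{\delta}\Sigma X$ via $(\Id,v,t,\Id)$. Since $\pi(t)\pi(j)=\pi(i)\pi(v)=0$ and $\pi(q)$ is surjective, $\pi(t)$ factors through $\pi(q)$. This gives a section $s$ of $\pi(\delta)$ (Lemma~\ref{lem:double-ghost-splitting}). If $s=\pi(r)$, then $\pi(\delta r)=\Id$, so $\delta r$ is an equivalence. Then $(\Sigma w)\delta=0$ forces $w=0$.

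No iteration over higher powers is needed, but you do need a ghost with nonzero square. The square-zero four-cell example cannot supply one, so the paper uses $\Psi^2$ on $\Ind_{C_p}^G\Sigma^\infty_+(\CP^k)^p$.
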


If $\ell$ is a prime not dividing $|G|$, each of the three
$\ell$-local versions of Conjecture~\ref{conj:equivariant} is equivalent
to the classical nonequivariant $\ell$-local generating hypothesis.
This follows from the geometric fixed-point splitting~\cite{Wimmer}
and averaging over the Weyl groups.

The equivariant homotopy of a finite $G$-spectrum $X$
contains more information than the module $\pi_\star^G X$ over the
equivariant stable homotopy ring $\pi_\star^G S$. The homotopy groups
at all subgroups also carry restrictions, transfers, and all other
operations induced by stable maps between the detecting spectra.
Morphisms of homotopy modules commute with all these operations
(see Section~\ref{sec:module-detection}). Even with all this structure,
our proof of nonfullness gives the following result.

\begingroup
\begin{maintheorem}[Homotopy modules]
\label{cor:same-modules}
There exist non-equivalent finite $G$-spectra $A$ and $B$ that are
$p$-local and whose homotopy Mackey functors admit, for all $H\leq G$ and
$W\in\RO(H)$, isomorphisms
\[
 \underline{\pi}_W^H(A)\cong\underline{\pi}_W^H(B)
\]
compatible with all stable operations.
\end{maintheorem}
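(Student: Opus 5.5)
Following the classical argument of Freyd, I will take $A$ and $B$ to be cofibers. I take a representation-orbit ghost $f:X\to Y$ between finite $p$-local $G$-spectra, which exists by Theorem~\ref{thm:main-results}(\ref{item:main-failure}). Then I compare $B=\cofib(f)$ with $A=Y\vee\Sigma X$, the cofiber of the zero map.

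\textbf{Step 1: the homotopy modules agree.} Let $D$ be any detecting spectrum $G_+\wedge_H S^W$. Since $f$ is a ghost, $[D,f]=0$ and $[\Sigma^{-1}D,f]=0$. The long exact sequence for $[D,-]$ applied to $X\to Y\to B\to\Sigma X$ therefore breaks into short exact sequences
\[
 0\to[D,Y]\to[D,B]\to[D,\Sigma X]\to0.
\]
These sequences are natural in $D$ with respect to all stable operations between detecting spectra. So $\pi(B)$, the homotopy module of $B$, is an extension of $\pi(\Sigma X)$ by $\pi(Y)$ in the abelian category of homotopy modules, that is, additive functors on the full subcategory of detectors.

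To get $\pi(B)\cong\pi(A)=\pi(Y)\oplus\pi(\Sigma X)$, I need this extension to split. The plan is to choose $f$ so that it does. The cleanest choice is to make $\pi(\Sigma X)$ projective, or $\pi(Y)$ injective, in the module category. For example, if $\Sigma X$ is a finite wedge of detectors, then $\pi(\Sigma X)$ is representable and hence projective by Yoneda. So I look for a ghost whose \emph{source} is a wedge of desuspended detectors $G_+\wedge_H S^{W}$.

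\textbf{Step 2: reduce to such a source.} This reduction is not automatic. A ghost out of a representable functor's representing object is zero, since $[D,f]=0$ applied to the identity class forces $f=0$. So the source cannot be projective, and I must instead aim for an injective target. I will therefore use the nonfullness statement, Theorem~\ref{thm:main-results}(\ref{item:main-nonfullness}), in the form proven there.

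Take a module map $\phi:\pi(Y')\to\pi(Z)$ that is not realized by any map $Y'\to Z$. The standard trick is to represent a non-split extension differently. Given finite $A,B$ with an abstract isomorphism of modules not realized by a map, the spectra need not be equivalent. Concretely, I expect the paper's nonfullness proof to produce, via the circle power maps, two finite $p$-local spectra $A$ and $B$ together with a module isomorphism $\pi(A)\cong\pi(B)$. The obvious candidates are a cofiber of a ghost and the split wedge, with the extension split because the relevant $\mathrm{Ext}^1$ vanishes for degree reasons. Degree reasons here means that the graded pieces of $\pi(Y)$ and $\pi(\Sigma X)$ are concentrated in disjoint $\RO$-degrees coupled only through the power-map operations.

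I will verify the splitting by writing an explicit section $\pi(\Sigma X)\to\pi(B)$ on the generators coming from the projective-space cells and checking compatibility with transfers and restrictions.

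\textbf{Step 3: non-equivalence.} Suppose $B\simeq Y\vee\Sigma X$. I argue that this forces $f=0$. In a triangle $X\xrightarrow{f}Y\to B$, one has $B\simeq Y\vee\Sigma X$ compatibly with the maps, which forces the connecting map to be split and hence $f=0$. To avoid the subtlety that an abstract equivalence need not be compatible with the triangle maps, I instead distinguish $A$ and $B$ by an invariant not seen by homotopy modules. A natural candidate is a mod $p$ Steenrod or cohomology operation on geometric fixed points $\Phi^{C_p}$: there the power map acts by a nonzero operation, while on the wedge it is trivial.

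\textbf{Main obstacle.} I expect the main obstacle to be Step 1's splitting together with the invariant in Step 3. That is, I must find a ghost for which the extension of homotopy modules splits, yet the cofiber is detectably non-split by a nonhomotopical invariant. My first attempt will use the power map ghost from part (\ref{item:main-failure}) and check $\mathrm{Ext}^1$ vanishing in the relevant degrees.
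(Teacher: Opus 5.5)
Your outline has the right shape (the cofiber of a ghost versus the split wedge), but both essential steps have gaps.

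\textbf{Splitting.} In Step 1 you correctly reduce to splitting $0\to h(Y)\to h(\cofib f)\to h(\Sigma X)\to0$, but you never supply a splitting. ``$\mathrm{Ext}^1$ vanishes for degree reasons'' has no basis, since these modules are not concentrated in separated degrees. Step 2 only defers to what you expect the nonfullness proof to contain.

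In fact, for a single ghost the extension splits only if $f$ is a composite of two ghosts. To see this, lift a section along an epimorphism $h(P)\to h(\Sigma X)$, with $P$ a sum of detectors, and realize the composite by a map $P\to\cofib f$ via Yoneda. Completing to a map of triangles then writes $\Sigma f$, up to sign, as a ghost $\Sigma X\to\Sigma K$ followed by a map $\Sigma K\to\Sigma Y$. The second map is a ghost because $h(Y)\to h(\cofib f)$ is injective. So using the four-cell ghost of part~(1) would require showing it lies in the square of the ghost ideal, and nothing in your plan addresses that.

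The missing idea is the converse, Lemma~\ref{lem:double-ghost-splitting}. Let $w=vu$ with $u,v$ ghosts, and complete $u$ to a triangle $X\xrightarrow{u}Y\to D\to\Sigma X$. Map it to the triangle of $w$ using $v$. The induced $h(D)\to h(\cofib w)$ kills $h(Y)$ because $h(v)=0$, so it factors through $h(\Sigma X)$ and gives the section. No projectivity or injectivity is involved. The required input is a nonzero double ghost on a finite spectrum. The paper gets one as $\Psi^2\neq0$ from Theorem~\ref{thm:long-main}. The four-cell ghosts square to zero, so they do not supply one this way.

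\textbf{Non-equivalence.} Step 3 fails as stated. You rightly note that an abstract equivalence need not respect the triangle, but your replacement invariant cannot work. Underlying spectra and geometric fixed points are exact, and these ghosts vanish under both: $i^*(p-[C_p/e])=0$ and $\Phi^{C_p}(1-g_X)=0$. The four-cell $X_p$ even has $\Phi^{C_p}X_p=0$, since all its cells are free. Hence $\Phi^{C_p}\cofib(f)\simeq\Phi^{C_p}Y\vee\Sigma\Phi^{C_p}X$, and no cohomology operation there separates the two spectra.

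The paper instead counts elements of finite mapping groups, which must first be arranged. Take a finite $C_p$-spectrum $Y$ with $\Psi_Y^3\neq0$ and $p^r\Psi_Y=0$, and set $Z=Y\wedge S/p^r$. Then $\Psi_Z^2\neq0$, since otherwise $\Psi_Y^2=p^rv$ and hence $\Psi_Y^3=0$. Also $p^{2r}\Id_Z=0$. After inducing to $G$, put $A=\cofib(\Psi_Z^2)$ and $B=Z\vee\Sigma Z$. The exact sequence for $[Z,-]^G$ gives
\[
|[Z,A]^G|=|\operatorname{coker}(\Psi_Z^2)_*|\,|\ker(\Sigma\Psi_Z^2)_*|<|[Z,Z]^G|\,|[Z,\Sigma Z]^G|=|[Z,B]^G|.
\]
The inequality is strict because $(\Psi_Z^2)_*(\Id_Z)\neq0$. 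This torsion construction is also what makes $A$ and $B$ $p$-local, and it is why a nonzero cube, not just a square, is needed.
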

\par
\endgroup
{The nonfaithfulness in
Theorem~\ref{thm:main-results}\textup{(\ref{item:main-failure})}
already occurs on a spectrum with four free orbit cells.} We construct these examples in
Sections~\ref{sec:prime-order}--\ref{sec:all-groups}
(Theorem~\ref{thm:main}).

Our second construction gives nonzero powers of arbitrary length
for each fixed group.

\begin{maintheorem}[Arbitrarily long ghost powers]\label{thm:long-main}
For every $m\geq0$ and $n\geq1$, there are a compact object $Y$ in the
$p$-local $G$-equivariant stable homotopy category and an endomorphism
$f:Y\to Y$ such that
\[
 \pi_W^H(f)=0\quad(H\leq G,\ W\in\RO(H)),
 \qquad p^m\cdot f^n\neq0.
\]
\end{maintheorem}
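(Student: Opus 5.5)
The plan is to construct the ghost as the smash product of two pieces: a nonequivariant-looking ``phantom-like'' self-map that is nonnilpotent but not a ghost, and a fixed representation-orbit ghost that annihilates all detectors. The abstract points to circle power maps and cyclic permutations of products of projective spaces. So I would take $P=(\CP^{k})^{\times p}$, with $C_p\le G$ acting by cyclic permutation, and induce it up along $G_+\wedge_{C_p}(-)$; here $C_p$ exists by Cauchy. The candidate endomorphism is the map induced by the degree-$d$ circle power map $z\mapsto z^d$ on each factor $\CP^k$, with $d$ chosen $\equiv 1 \pmod p$ but not $\equiv 1$ modulo a higher power of $p$ (e.g.\ $d=1+p$). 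On cohomology this map acts by $d^j$ on $x^j$. After passing to a suitable finite cellular subquotient $Y$, the map $\phi_d-\mathrm{id}$, or a polynomial combination of power maps, will be arranged to kill the bottom cells.

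The key steps, in order, are as follows.

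First, define $f$ on a skeleton-truncated subquotient $Y$ of $\Sigma^\infty_G G_+\wedge_{C_p}P$ as a linear combination $\sum_i c_i\phi_{d_i}$ of power maps. The coefficients are chosen so that on each cell filtration the induced map on ordinary (Borel) cohomology, and hence on $\pi^H_W$ after the Atiyah--Hirzebruch/isotropy filtration, vanishes. In effect $f$ is a ghost because it is zero on the associated graded of every detecting homotopy computation. Here I would invoke the construction of Theorem~\ref{thm:main}, whose four-free-cell ghost shows the pattern: a map that is zero on associated graded but nonzero through an attaching map detected by a nontrivial Tate or transfer extension.

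Second, prove $\pi_W^H(f)=0$ for all $H\le G$ and $W\in\RO(H)$. By the double coset formula, $\Res^G_H$ of an induced spectrum splits into induced pieces from $H\cap{}^gC_p$. So it suffices to treat $H=e$ and $H=C_p$. At $e$ the underlying map is a combination of power maps on $(\CP^k)^{p}$ that is zero in homotopy by the choice of $c_i$. At $C_p$ one uses the isotropy separation cofiber sequence $EC_{p+}\to S^0\to\widetilde{E}C_p$. The geometric fixed points are the diagonal $\CP^k$, on which the combination is chosen to vanish as well, and the Borel part reduces to the homotopy orbit spectral sequence.

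Third, prove $p^mf^n\neq0$. Compose $f^n$ with the inclusion of the bottom cell and the projection to the top cell of a cyclic-permutation piece. This gives an element of $[S^a,S^b]^{G}$ or of a Tate-type group, which I would identify explicitly as $\bigl(\prod_i(\text{eigenvalue factors})\bigr)\cdot\alpha$. Here $\alpha$ is detected by a transfer class of order a high power of $p$, for instance in $\pi_0$ of $\Sigma^\infty B C_{p+}$ or the $p$-completed Burnside piece $\hat{\Z}_p$. Taking $k$ large relative to $m+n$ makes the relevant group torsion-free or of order exceeding $p^{m+n}$, so multiplying by $p^m$ leaves it nonzero.

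The main obstacle is the third step, keeping $f^n$ nonzero while each $f$ remains a ghost. The nilpotence theorem for ghost ideals says ghost length is bounded by cell length, so $Y$ must grow with $n$; I would take $k\ge n(m+1)$ cells. My first attempt would be to compute the composite on the top-to-bottom cell via the cyclic permutation's norm map. In that computation the degrees $d^{j}$ accumulate as $\prod (d^{j}-1)$, whose $p$-adic valuation I can control linearly via $d=1+p$.
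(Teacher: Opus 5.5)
There is a genuine gap in both halves of your plan.

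\textbf{The ghost property.} Your first and second steps rely on the inference ``zero on the associated graded of every detecting computation, hence zero on $\pi_W^H$.'' This inference is false. Vanishing on an associated graded only says that the map raises filtration. For example, multiplication by $p$ on $\Z/p^2$, filtered by $p\Z/p^2$, is zero on the associated graded but nonzero; and the paper's own $f_p$ is zero on ordinary homology yet nonzero. Likewise, at $H=C_p$, vanishing on the underlying map and on the diagonal $\CP^k$ does not by itself force vanishing on $\pi_W^{C_p}$. The Borel part you defer to a spectral sequence is where all the content lies.

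Combinations of power maps $\phi_d$ are also not natural, so you would have to check infinitely many $W\in\RO(H)$ by hand. At $H=e$ the underlying map is an endomorphism of a finite nonequivariant spectrum. Unless you prove it null, and your plan gives no argument for this, certifying it as a ghost would amount to a counterexample to Freyd's open conjecture.

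The missing idea is to obtain ghostness from naturality. The operation $\Psi=(p-[C_p/e])(1-g)$ is a natural endomorphism of every $C_p$-spectrum. On each $S^V$ it equals $0\in A(C_p)$ by the marks homomorphism, since $p-[C_p/e]$ has underlying degree $0$ and $1-g$ has geometric fixed-point degree $0$. Hence $\Psi_{S^V\wedge Y}=0$. The naturality square (Lemma~\ref{lem:central-ghost} with $\Theta=1-\Psi$) together with the fixed-point adjunction then gives nullity of every $(S^{-V}\wedge\Res_H^{C_p}\Psi_X)^H$ at once, with a literally null underlying map.

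\textbf{Nonvanishing.} Your third step contradicts your first. Since $\pi_*\otimes\mathbb{Q}$ of a spectrum is its rational homology, a ghost must act by zero on the rational (co)homology of the underlying spectrum. So every eigenvalue factor $\sum_i c_i d_i^{\,j}$ of $f$ vanishes, and their product detects nothing. Conversely, a nonzero factor such as $(1+p)^j-1$ makes $f$ fail to be a ghost at $H=e$. Thus $p^mf^n$ must be detected by something invisible to cohomology, and ``a transfer class of order a high power of $p$'' does not supply it.

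The paper does this as follows.
\begin{enumerate}
\item It works on $\Sigma^\infty_+(BS^1)^p$ and restricts along the $C_p$-equivariant homomorphism $\varphi\colon C_{p^2}\to(S^1)^p$, $h\mapsto(\zeta,\zeta^{1+p},\ldots,\zeta^{1+p(p-1)})$.
\item The May--Snaith--Zelewski splitting of $F(\Sigma^\infty_+BC_{p^2},\Sigma^\infty_+(BS^1)^p)^\wedge_p$ separates the translates $g^j\varphi$ into distinct $C_p$-invariant summands. This gives coordinates $\theta_j$ with $(\theta_i^{hC_p})_*[g^j\varphi]=\delta_{ij}$.
\item The Segal conjecture for $C_p$ makes these coordinates $A(C_p)$-linear. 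Composing with $\Phi$ then gives an additive map $\Xi$ to $\Z_p[C_p]$ with $\Xi(p^m\Psi^n\varphi)=p^{m+n}(1-g)^n\neq0$.
\item A Milnor sequence transfers this to $(\CP^k)^p$ for large $k$. Its $\varprojlim\nolimits^{1}$ term vanishes because the groups $[\Sigma\Sigma^\infty_+(\CP^k)^p,\Sigma^\infty_+(BS^1)^p]^{C_p}$ are finitely generated and torsion.
\item Lemma~\ref{lem:induction} and $p$-localization finish the proof.
\end{enumerate}

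Your choice of $(\CP^k)^p$ with cyclic permutation, the double-coset reduction to $H=e,C_p$, and letting $k$ grow with $n$ do match the paper.
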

In fact, we can further show that $(S^{-W}\wedge\Res_H^G f)^H$ is null as a map
of spectra for every $H\leq G$ and $W\in\RO(H)$.

{These ghost powers give simultaneous lower bounds for the flat,
projective, and injective dimensions of all three homotopy modules
on the same finite spectrum.}

\begingroup
\begin{maintheorem}[Homological consequences]\label{cor:homological}
For every integer $n\geq1$, there is a finite $p$-local $G$-spectrum
$Y_n$ such that the homotopy modules
\[
 \underline{\pi}_*Y_n,\qquad
 \underline{\pi}_\star^G(Y_n),\qquad
 \{\underline{\pi}_\star^H(Y_n)\}_{H\leq G}
\]
each have flat, projective, and injective dimension at least $n$ in their
respective $p$-local module categories. Consequently, each of these
categories has infinite weak global dimension and infinite global dimension.
\end{maintheorem}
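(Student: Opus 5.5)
We deduce Theorem~\ref{cor:homological} from Theorem~\ref{thm:long-main} with the standard ghost-length argument of Christensen~\cite{Christensen}, run in all three detector families at once. Fix $n\geq1$. Theorem~\ref{thm:long-main} with $m=0$ and $n+1$ in place of $n$ gives a finite $p$-local $G$-spectrum $Y$ and a self-map $f$ with $\pi_W^H(f)=0$ for all $H\leq G$ and $W\in\RO(H)$, but $f^{n+1}\neq0$. By the inclusion of detector families in Conjecture~\ref{conj:equivariant}, $f$ is a ghost for each of the three families, so $f^{n+1}$ is a nonzero composite of $n+1$ ghosts on the same finite spectrum. Set $Y_n=Y$.

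In each family, let $\mathcal{A}$ be the $p$-local homotopy-module category (additive functors on the detector spectra, with their stable operations), and let $\pi\colon\Sp^G\to\mathcal{A}$ be the homotopy-module functor. Its key properties are:
\begin{itemize}
\item $\pi$ is homological;
\item every representable $\pi(G_+\wedge_H S^W)$ is projective;
\item $\pi$ is an isomorphism on maps out of a coproduct of detectors, by Yoneda.
\end{itemize}
Suppose $\pi Y$ had projective dimension $d\leq n$. Choose an epimorphism from a free module, realize it by a map $P_0\to Y$ from a finite wedge of detectors (finite because $\pi Y$ is finitely generated for compact $Y$), and let $Y^{(1)}$ be its fiber. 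Then $\pi Y^{(1)}\to\pi P_0$ is injective and the connecting map $Y\to\Sigma Y^{(1)}$ is a universal ghost. Iterating $d$ times, the $d$-th syzygy is projective, hence a summand of a free module and realized by a retract of a wedge of detectors. This gives an Adams resolution of length $d$, and standard Christensen theory shows that any composite of $d+1$ ghosts out of $Y$ vanishes, contradicting $f^{n+1}\neq0$. So the projective dimension is at least $n+1>n$.

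For the flat dimension, work in the compact setting: $\pi Y$ is finitely presented over a coherent enough ring, so flat and projective resolutions agree in length, i.e., finitely presented flat modules are projective. Alternatively, run the same argument with Tor against representables. For the injective dimension, use Spanier--Whitehead duality $D$, which preserves finite $G$-spectra and the detector families up to $W\mapsto -W$. Then $Df$ is again a ghost with $(Df)^{n+1}\neq0$, and $\pi(DY)$ relates to the $\mathbb{Q}/\mathbb{Z}_{(p)}$-dual (Brown--Comenetz) of $\pi Y$. Injective dimension of $\pi Y$ then corresponds to flat or projective dimension of the dual module. This step is the main obstacle: checking that the Pontryagin/Brown--Comenetz dual of a representable is injective in $\mathcal{A}$, and that duality exchanges the two dimensions in this functor category. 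I would first try to prove that $\Hom(\pi(-),\mathbb{Q}/\mathbb{Z}_{(p)})$ represented by $I_{\mathbb{Q}/\mathbb{Z}_{(p)}}Y$ yields injective cogenerators. The argument then dualizes using injective Adams resolutions.

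Since $n$ is arbitrary, each category contains objects of arbitrarily large flat dimension, hence infinite weak global dimension, and therefore also infinite global dimension.
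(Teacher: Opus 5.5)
Your projective-dimension argument is Christensen's standard one and is fine. The paper instead derives the projective bound from the flat one via $\operatorname{fd}\le\operatorname{pd}$.

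\textbf{The genuine gap is the flat dimension.} Your conclusion of infinite weak global dimension also rests on it. You assume that $h(Y_n)$ is finitely presented over a coherent coefficient category, and neither is true. The underlying level of the homotopy module of $S/p$ (say with trivial action) surjects onto a shift of $\operatorname{Ann}_{\pi_*S_{(p)}}(p)$. That ideal is not finitely generated, as one sees by mapping to the connective image-of-$J$ spectrum, or to $ko$ at $p=2$. Three consequences follow:
\begin{itemize}
\item homotopy modules of finite spectra need not even be finitely generated;
\item the kernel of $p$ on the representable $h(S)$ shows that $\operatorname{Mod}\mathcal C$ is not coherent;
\item without coherence, the flat dimension can be strictly smaller than the projective dimension.
\end{itemize}
Your alternative of Tor against representables detects nothing, since representables are projective.

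\textbf{The missing idea} is a homotopical reading of flatness. If $h(F)$ is flat, write it as a filtered colimit of finitely generated free modules. Co-Yoneda and thick induction then show that every map from a finite spectrum to $F$ factors through a finite sum of detectors. Hence every ghost out of $F$ is phantom. Now suppose $\operatorname{fd}h(X)<n$. The $(n-1)$st stage of the universal ghost tower of $X$ has flat homotopy module up to suspension. So any composite of $n$ ghosts out of $X$ is a ghost out of that stage (a phantom) precomposed with a map from the finite spectrum $X$, hence zero. This is where finiteness of $Y_n$ is used, not through any finite presentation of $h(Y_n)$.

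\textbf{Two smaller corrections.}

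First, your parenthetical claim that $h(Y)$ is finitely generated for compact $Y$ is false for the same reason. It is harmless, though: arbitrary wedges of detectors still have free homotopy modules and the Yoneda property, because detectors are compact.

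Second, for the injective bound, drop the Spanier--Whitehead detour. The module $h(DY)$ is the cohomotopy of $Y$, not a character module of $h(Y)$. The identity $\operatorname{id}(M^\vee)=\operatorname{fd}(M)$ only helps for modules that are themselves character modules. Your fallback is the right one, and it is the paper's route. For a detector $Q$, Brown representability gives $I_{\mathbb{Q}/\mathbb{Z}_{(p)}}DQ$ with
\[
 [Z,I_{\mathbb{Q}/\mathbb{Z}_{(p)}}DQ]^G\cong\Hom\bigl([Q,Z]^G,\mathbb{Q}/\mathbb{Z}_{(p)}\bigr)\cong\Hom\bigl(h(Z),h(I_{\mathbb{Q}/\mathbb{Z}_{(p)}}DQ)\bigr).
\]
These modules are injective cogenerators. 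Products and split idempotents then realize every injective module with the same universal property. The dual of your resolution argument shows that $\operatorname{id}h(Y)=d$ kills every composite of $d+1$ ghosts into $Y$.
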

\par
\endgroup

\begingroup
\subsection*{Constructions and proofs}
The starting point for this paper is the work of the first and third
authors~\cite{MX} on the algebraic and motivic generating hypotheses.
Their constructions produce finite ghosts with arbitrarily long
nonzero composition powers. Two ideas from that work guide our
equivariant constructions: natural operations that vanish on the
detecting objects, and projective-space models supporting long ghost powers.

The first idea is the following naturality principle.

\begin{lemma}\label{lem:central-ghost}
Let $\mathcal T$ be an additive category with a collection $\mathcal P$
of objects. Suppose $\Theta:\Id_{\mathcal T}\Rightarrow\Id_{\mathcal T}$
is natural and $\Theta_P=\Id_P$ for every $P\in\mathcal P$.
Then $1-\Theta_X$ induces zero on $\Hom_{\mathcal T}(P,X)$
for every $P\in\mathcal P$ and $X\in\mathcal T$.
\end{lemma}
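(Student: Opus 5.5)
The argument is a direct application of naturality of $\Theta$. Fix $P\in\mathcal P$, $X\in\mathcal T$, and a morphism $g\colon P\to X$. We must show that the map
\[
 (1-\Theta_X)_*\colon \Hom_{\mathcal T}(P,X)\to\Hom_{\mathcal T}(P,X),\qquad g\mapsto (1-\Theta_X)\circ g,
\]
is zero. Here $1$ means $\Id_X$, and $1-\Theta_X$ makes sense because $\mathcal T$ is additive, so each $\Hom$ is an abelian group and composition is bilinear.

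First, naturality of $\Theta\colon\Id_{\mathcal T}\Rightarrow\Id_{\mathcal T}$ applied to $g$ gives the commuting square
\[
 \Theta_X\circ g = g\circ\Theta_P .
\]
Second, the hypothesis $\Theta_P=\Id_P$ turns this into $\Theta_X\circ g=g$. Third, bilinearity of composition yields
\[
 (1-\Theta_X)\circ g=g-\Theta_X\circ g=g-g=0 .
\]
Since $g$ was arbitrary, the induced map on $\Hom_{\mathcal T}(P,X)$ vanishes, which is the claim.

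There is no genuine obstacle. The one point to state explicitly is that the additive structure is what makes $1-\Theta_X$ a morphism and makes postcomposition additive. If desired, we would also record the consequence used later: by naturality, $1-\Theta_X$ is an endomorphism of $X$ that is a ghost with respect to $\mathcal P$ in the sense of \cite[Definition~2.5]{Christensen}.
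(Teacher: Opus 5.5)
Your proof is correct and matches the paper's argument: naturality applied to a morphism $a\colon P\to X$ gives $\Theta_X a=a\Theta_P=a$, so $(1-\Theta_X)a=a-a=0$ by bilinearity of composition. Your remark that additivity is what makes $1-\Theta_X$ a morphism and postcomposition additive is the only extra point, and it is accurate.
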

\begin{proof}
For $a:P\to X$, naturality gives $(1-\Theta_X)a=a-a\Theta_P=0$.
\end{proof}

In a triangulated category, include the relevant shifts among
$\mathcal P$. In that work, formal multiplication gives natural
coefficient automorphisms $\Theta$ fixing the shifted units,
so $1-\Theta$ is a ghost. A rank-two comodule supports a nonzero
square-zero ghost of order $p$ on an object built from four shifts of
$BP_*$. Projective-space coefficients give arbitrarily long nonzero powers.
Here the group action supplies the natural automorphism.

\Needspace{8\baselineskip}
\begin{proposition}[A natural cyclic ghost operation]\label{prop:natural-main}
Let $g$ generate $C_p$, and let $g_X$ be its action on a
$C_p$-spectrum $X$. The natural operation
\[
 \Psi_X=(p-[C_p/e])(1-g_X)
\]
is a representation-orbit ghost with null underlying map.
It is nilpotent on every finite $C_p$-spectrum, but its nilpotence
exponents on finite spectra are unbounded. These statements also
hold $p$-locally.
\end{proposition}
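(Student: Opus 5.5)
The plan is to realize $\Psi$ as a natural transformation of the identity functor on $C_p$-spectra, check that it vanishes on each representation-orbit detector, and then apply Lemma~\ref{lem:central-ghost}.

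\textbf{Naturality.} Since $C_p$ is abelian, $g_X:X\to X$ is a $C_p$-equivariant map, and $g_X$ is natural in $X$. The Burnside ring $A(C_p)=\pi_0^{C_p}S$ acts naturally on every $X$ by smashing with self-maps of $S$. Hence $\Theta=1-\Psi$ is an endotransformation of the identity functor, and it remains so after $p$-localization.

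\textbf{Vanishing on detectors.} Take $P=C_{p+}\wedge_H S^W$ and split by $H$.
\begin{itemize}
\item If $H=e$, then $P=C_{p+}\wedge S^n$. An element $a\in A(C_p)$ acts on an induced spectrum through its restriction $\Res^{C_p}_e a\in A(e)=\Z$. Since $\Res^{C_p}_e[C_p/e]=p$, the factor $p-[C_p/e]$ already acts as zero on $P$.
\item If $H=C_p$, then $P=S^V$ with $V\in\RO(C_p)$. Here $g_{S^V}$ is a stable self-map of a representation sphere. By tom Dieck, such self-maps are elements of $A(C_p)$, detected by their degrees on the underlying sphere and on the fixed points.
\item On fixed points $g$ acts trivially, so $1-g_{S^V}$ has fixed-point degree $0$. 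An element of $A(C_p)$ with vanishing fixed-point mark is an integer multiple $c[C_p/e]$; for $p=2$ and $V=\sigma$ one gets $1-g=[C_2/e]$.
\item Therefore $\Psi_{S^V}=c(p-[C_p/e])[C_p/e]=0$, using $[C_p/e]^2=p[C_p/e]$. For virtual $V$, I will argue by desuspension, using that $g_{S^{-V}}$ is dual to $g_{S^{V}}$.
\end{itemize}
So $\Psi_P=0$ for every detector $P$. Lemma~\ref{lem:central-ghost}, applied with $\Theta=1-\Psi$, then shows that $\Psi$ is a representation-orbit ghost.

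\textbf{Underlying map.} Applying $\Res^{C_p}_e$ turns $p-[C_p/e]$ into $p-p=0$, so the underlying map of $\Psi_X$ is null.

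\textbf{Nilpotence on finite spectra.} This follows from the standard ghost lemma. The representation-orbit detectors generate the finite $C_p$-spectra as a thick subcategory. If $X$ is built in $n$ cofiber steps from retracts of sums of detectors, then any composite of $n$ ghosts out of $X$ is null. Applied to $\Psi_X^n$, this gives nilpotence with exponent bounded by the length of $X$.

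\textbf{Unboundedness.} This is the main obstacle. Using $(p-[C_p/e])^k=p^{k-1}(p-[C_p/e])$, one gets
\[
\Psi^k=p^{k-1}(p-[C_p/e])(1-g)^k,
\]
so one needs finite $p$-local $X_k$ with $\Psi_{X_k}^k\ne0$. Neither easy source of examples works.
\begin{itemize}
\item Free spectra give $\Psi=0$, since $p-[C_p/e]$ vanishes on them.
\item Actions restricted from the circle give $g\simeq\Id$ equivariantly, since $g$ is connected to the identity in $S^1$.
\end{itemize}
I would therefore use the paper's projective-space models: take $X_k$ to be a finite skeleton of a cyclic power of $\CP^N$, with $g$ acting by cyclic permutation of the factors, twisted by a circle power map. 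I would then detect $\Psi^k$ after a functor where $p-[C_p/e]$ acts as $p$ while $g$ acts nontrivially. Candidates are Borel cohomology, or geometric fixed points composed with a cohomology theory where $g$ acts by a $p$-th power operation. In Borel cohomology I expect $(1-g)^k$ to act as multiplication by $x^k$ on a truncated polynomial class, nonzero for $N\ge k$. A $p$-adic valuation count should then show that the factor $p^{k-1}$ does not kill this class.
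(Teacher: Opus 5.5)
Your arguments for naturality, for $\Psi_P=0$ on the detectors $C_{p+}\wedge_H S^W$, for the null underlying map, and for nilpotence via the ghost lemma are correct and essentially the paper's. The only difference is that you show $1-g_{S^V}\in\Z\cdot[C_p/e]$ and use $(p-[C_p/e])[C_p/e]=0$, whereas the paper checks the two marks directly.

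\textbf{The gap: unboundedness of the nilpotence exponents.} This is the substantive content of the statement, and the detection strategy you sketch cannot work, because every functor you propose kills $\Psi$ for formal reasons.
\begin{itemize}
\item On underlying spectra $p-[C_p/e]$ restricts to $0$, as you note yourself. So the permutation action on $H^*((\CP^N)^p)$ is irrelevant.
\item Borel cohomology means maps into $F(EC_{p+},E)$ with $E$ carrying the trivial action. There $g$ acts on the target by a map equivariantly homotopic to the identity, since equivariant self-maps of $EC_p$ are all homotopic. Hence $\alpha\circ g_X=g\circ\alpha=\alpha$, and $1-g$ acts as zero.
\item Moreover, with ordinary coefficients $[C_p/e]$ acts as multiplication by the transfer of $1$ along a $p$-sheeted cover, which is $p$. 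So $p-[C_p/e]$ acts as zero there, not as $p$.
\item On geometric fixed points, $\Phi^{C_p}(g_X)=\Id$ for every $X$. This is exactly the half of the ghost argument showing that $1-g$ dies there. So no cohomology theory applied afterwards can see $1-g$.
\end{itemize}
There is also no mechanism making $(1-g)^k$ act as multiplication by $x^k$. The ``circle power map twist'' plays no role in the paper's models, which are just $\Sigma^\infty_+(\CP^k)^p$ with cyclic permutation.

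\textbf{What is missing.} You need a detector that sees $g$ through a source with a nontrivial $C_p$-action, and sees $p-[C_p/e]$ in stable cohomotopy, where it is not killed. The paper proceeds as follows.
\begin{itemize}
\item It precomposes with the $C_p$-equivariant homomorphism $\varphi:C_{p^2}\to(S^1)^p$, $h\mapsto(\zeta,\zeta^{1+p},\ldots,\zeta^{1+p(p-1)})$, where $C_p$ acts on $C_{p^2}$ by $h\mapsto h^{1+p}$.
\item It uses the May--Snaith--Zelewski splitting of $F(\Sigma^\infty_+BC_{p^2},\Sigma^\infty_+(BS^1)^p)^\wedge_p$, made $C_p$-equivariant in $\Fun(BC_p,\Sp)$. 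This puts the maps $g^j\varphi$ in distinct invariant summands.
\item It applies the Segal conjecture $A(C_p)^\wedge_p\cong\pi_0((S^\wedge_p)^{hC_p})$ followed by the mark $\Phi$, which sends $p-[C_p/e]$ to $p$. This yields a homomorphism $\Xi$ to $\Z_p[C_p]$ with $\Xi(p^m\Psi^n\varphi)=p^{m+n}(1-g)^n\neq0$.
\item It passes to a finite stage $(\CP^k)^p$ via the Milnor sequence. The relevant $\varprojlim^1$ vanishes because the groups $[\Sigma\Sigma^\infty_+(\CP^k)^p,\Sigma^\infty_+(BS^1)^p]^{C_p}$ are finitely generated and rationally zero.
\item The $p$-local statement follows because $\Psi$ is $p$-power torsion on finite spectra. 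This comes from $p^p\Psi=\sum_{j=2}^p(-1)^j\binom pj p^{p-j}\Psi^j$ together with nilpotence.
\end{itemize}
Your plan contains none of these ingredients or a substitute for them. So the unboundedness claim, integrally and $p$-locally, remains unproved.
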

The factors serve complementary purposes: $p-[C_p/e]$, where
$[C_p/e]\in A(C_p)$ is the class of the free orbit, vanishes on
underlying spectra, while $1-g_X$ vanishes on $C_p$-geometric fixed
points. Their product therefore vanishes on representation spheres,
and naturality makes $\Psi_X$ a ghost. It remains to find a finite
$X$ with $\Psi_X\ne0$.

Second, the projective-space coefficient examples in that work realize
as finite cellular motivic spectra with long ghost powers through the
Gheorghe--Wang--Xu comparison~\cite{GWX}. These examples suggest
$\CP^\infty$ as a natural object on which to seek nontrivial ghosts.
To introduce a natural $C_p$-action, we pass to its $p$-fold product
$(\CP^\infty)^p$, equipped with cyclic permutation of the factors.
We prove that every power of $\Psi$ is nonzero on
$\Sigma^\infty_+(\CP^\infty)^p$ and remains nonzero on a sufficiently
large finite stage $\Sigma^\infty_+(\CP^d)^p$.
Induction then gives
Theorem~\ref{thm:long-main} for all nontrivial finite groups.

To construct the four-cell ghosts, we start with power maps on a circle
with a free $C_p$-action and then smash with a Moore spectrum.
The case $p=2$ requires a modification. Longer ghosts also yield nonfullness, inequivalent spectra with
isomorphic homotopy modules, and unbounded homological dimensions.

\subsection*{Organization}
Sections~\ref{sec:prime-order}--\ref{sec:all-groups} prove the
four-cell Theorem~\ref{thm:main}, which implies
Theorem~\ref{thm:main-results}\textup{(\ref{item:main-failure})}.
Section~\ref{sec:prime-order} constructs the $C_p$-examples from
circle power maps and explains the intuition behind the construction.
Section~\ref{sec:prime-order-proofs} proves that the maps vanish after
every representation twist and passage to fixed points, and detects
their nonvanishing using chains with $C_p$-action.
Section~\ref{sec:all-groups} passes to arbitrary finite groups
by using the induction functor.

Section~\ref{sec:projective} proves
Proposition~\ref{prop:natural-main} and Theorem~\ref{thm:long-main}.
It constructs the natural cyclic ghost, detects its powers on
$(\CP^\infty)^p$, and passes to finite complexes and arbitrary
finite groups. Section~\ref{sec:module-detection} uses double
ghosts to prove Theorem~\ref{thm:main-results}\textup{(\ref{item:main-nonfullness})}
and distinguishes spectra with isomorphic homotopy modules to prove
Theorem~\ref{cor:same-modules}. Section~\ref{sec:homological}
derives the homological dimension bounds of
Theorem~\ref{cor:homological} from arbitrarily long ghost powers.
\par
\endgroup

\medskip
\noindent\textbf{Acknowledgments.}
The second author is partially supported by NSF Grant No.~DMS-2404828
and NSF CAREER Grant No.~DMS-2541934.
The third author is partially supported by the AMS Centennial
Research Fellowship and NSF Grant DMS-2506247.
The fourth author gratefully acknowledges that this research is
supported in part by the Pacific Institute for the Mathematical Sciences.

\section{\texorpdfstring{A four-cell ghost for $C_p$}{A four-cell ghost for Cp}}\label{sec:norm}\label{sec:prime-order}

We establish the following result in
Sections~\ref{sec:prime-order}--\ref{sec:all-groups}.
In this section, we construct the example for $C_p$.
Section~\ref{sec:all-groups} extends the construction to all
nontrivial finite groups by induction.

\begin{theorem}[Four-cell ghosts]\label{thm:main}
There are a finite $G$-spectrum $X$ with four free $G$-orbit
cells and an endomorphism $f:X\to X$ of exact additive order $p$
such that, for every $H\leq G$ and every $W\in\RO(H)$, the non-equivariant
spectrum map
\begin{equation}\label{eq:null}
 \bigl(S^{-W}\wedge\Res_H^G f\bigr)^H:
 \bigl(S^{-W}\wedge\Res_H^G X\bigr)^H
 \longrightarrow
 \bigl(S^{-W}\wedge\Res_H^G X\bigr)^H
\end{equation}
is null, while its image $f:X\to X$ in $\Fun(BG,\Sp)$ remains nonzero.
\end{theorem}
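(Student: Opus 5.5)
We first treat $G=C_p$ and then induce up. For $C_p$, take the circle $S^1$ with the free rotation action of $C_p$ and the $p$-fold power map $z\mapsto z^p$. More usefully, take the degree-$k$ maps on the free $C_p$-circle. Using the cell structure $C_{p+}\to S^1_+$, the suspension spectrum $\Sigma^\infty S(\lambda)_+$ has two free cells. Smashing with the mod $p$ Moore spectrum $S/p$ gives a spectrum $X = S(\lambda)_+\wedge S/p$ with four free $C_p$-cells.

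The candidate endomorphism is the natural cyclic operation specialised to this $X$:
\[
f=(p-[C_p/e])(1-g_X),
\]
or a variant built from the rotation by $e^{2\pi i/p}$ minus the identity composed with the transfer correction. By Lemma~\ref{lem:central-ghost}, and by the factorization described after Proposition~\ref{prop:natural-main}, this operation kills every representation sphere source. Here $p-[C_p/e]$ vanishes on underlying spectra, and $1-g$ vanishes on geometric fixed points. Since $X$ is free, $X^{C_p}\simeq X_{hC_p}$ by the Adams isomorphism, and the same holds after twisting by $S^{-W}$.

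To get the stronger statement that the spectrum map~\eqref{eq:null} is null, I would argue directly rather than through $\pi_*$. Because $X$ is built from free cells, $(S^{-W}\wedge X)^{C_p}\simeq (S^{-W}\wedge X)_{hC_p}$. The operation $1-g$ becomes null after taking homotopy orbits, because $g$ acts on a homotopy orbit spectrum as a map homotopic to the identity: conjugation is trivial since $C_p$ is abelian, and the relevant homotopy comes from the path in $S^1$, i.e.\ the rotation action extends to the circle. For $H=e$, the factor $p-[C_p/e]$ restricts to $p-p=0$. Both cases use only naturality plus this extension of the action to $S^1$, so the nullity holds as a map of spectra and not merely on homotopy.

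Exact additive order $p$ follows from the factor $S/p$: its identity has order $p$ for odd $p$, and order $4$ for $p=2$, which is why $p=2$ needs a modification. There I would replace $S/2$ by a different two-cell complex, or use $2\cdot$ a suitable map.

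The main obstacle is showing that $f\neq0$ in $\Fun(BC_p,\Sp)$. My plan is to pass to chains. The Borel image of $X$ maps to $C_*(S^1;\F_p)$ with its $C_p$-action, which is a two-term complex of free $\F_p[C_p]$-modules, $\F_p[C_p]\xrightarrow{1-g}\F_p[C_p]$. On this complex the map $f$ acts as $(p-N)(1-g)=-N(1-g)$, where $N$ is the norm; this composite is $0$ at chain level. So the detection must come from a homotopy rather than a strict chain map. I would therefore compute in the derived category of $\F_p[C_p]$-modules, using the bigger chain model of $X$ with four cells, where the Moore spectrum contributes an extension class. The task is to identify $f$ with a nonzero element of $\mathrm{Ext}^1_{\F_p[C_p]}$ between the cells, namely the Bockstein-type class of degree one that is multiplied by $N(1-g)$. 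I expect to need Toda-bracket bookkeeping here: exhibiting $f$ as the composite of the Moore Bockstein with a degree-shifting map, and checking nontriviality by a comparison with $\mathbb Z/p^2[C_p]$ coefficients.

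Finally, for general $G$, choose $C_p\le G$ and set $X_G=G_+\wedge_{C_p}X$ and $f_G=G_+\wedge_{C_p}f$. The cells stay free, with four $G$-orbits. The double coset formula makes $\Res^G_H(X_G)$ a sum of inductions from $H\cap {}^{x}C_p$, which is $e$ or a conjugate of $C_p$. By the previous cases the map~\eqref{eq:null} is then null. Nonvanishing in $\Fun(BG,\Sp)$ follows by restricting back to $C_p$, where $f$ is a summand of $\Res\Ind f$.
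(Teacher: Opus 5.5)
\textbf{The proposed endomorphism is null, so the proof does not go through.} On $X=S(\lambda)_+\wedge S/p$ the generator $g$ acts by rotating the circle through $2\pi/p$. The rotations through angles $\theta\in[0,2\pi/p]$ all commute with the $C_p$-action, so they give a $C_p$-equivariant homotopy from the identity to $g_X$. This is exactly the extension of the action to the connected group $S^1$ that you invoke for the fixed-point step, but it applies to $X$ itself, not only after homotopy orbits. Hence $1-g_X=0$ in $[X,X]^{C_p}$, so $(p-[C_p/e])(1-g_X)=0$. The same holds for any variant of the form ``rotation minus identity, times a correction.''

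The chain-level vanishing $-N(1-g)=0$ you noticed cannot be rescued by a hidden homotopy. On $C=(\F_p[C_p]\xrightarrow{1-g}\F_p[C_p])$ the map $1-g$ is null-homotopic via $h=1$, so no Bockstein or Toda-bracket bookkeeping can detect it. Your claim of exact additive order $p$ therefore fails as well. The natural ghost $\Psi$ is nonzero only on larger spectra whose action does not extend to a connected group, for example $\Sigma^\infty_+(\CP^d)^p$ with cyclically permuted factors. It does not give a four-cell example.

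\textbf{The missing idea} is an equivariant self-map of the free circle that is not equivariantly homotopic to the identity. The paper uses the power map $q_{p+1}(z)=z^{p+1}$, which is equivariant because $\zeta^{p+1}=\zeta$ when $\zeta^p=1$. Your $z\mapsto z^p$ is not an equivariant endomorphism of the free circle, since it sends $\zeta z$ to $z^p$.

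On the free-cell sequence $\Sigma(C_p)_+\xrightarrow{1-g}\Sigma(C_p)_+\to\Cof(a_\lambda)$, the map $1-q_{p+1}$ is zero on the bottom cell and acts by $-N$ on the top cell. Its underlying map is $\mathrm{diag}(0,-p)$. The paper takes $f_p=(1-q_{p+1})\wedge S/p$ on $\Cof(a_\lambda)\wedge S/p\simeq\Sigma S(\lambda)_+\wedge S/p$.

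Because $f_p$ is not a natural operation, nullity of the twisted fixed-point maps must be computed, not deduced from naturality. For $H=C_p$ and odd $p$, the Adams isomorphism identifies $(S^{-V}\wedge\Cof(a_\lambda))^{C_p}$ with $S^{1-|V|}\vee S^{2-|V|}$, on which $N$ acts by $p$. The map is then $\bigl(\begin{smallmatrix}0&\beta\\0&-p\end{smallmatrix}\bigr)$ with $\beta\in\pi_1S$. Smashing with $S/p$ kills $-p$, and it kills $\beta$ because $[\Sigma S/p,S/p]=0$.

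Nonvanishing in $\Fun(BC_p,\Sp)$ comes from the chain map $\tau_1=-N$, $\tau_0=0$ on $C$. It is not null-homotopic: a homotopy $h$ would need $(1-g)h=0$ and $h(1-g)=-N$, which contradicts commutativity of $\F_p[C_p]$.

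At $p=2$ the paper uses $S/4$ and $f_2=2(1-q_3)$. For $V=a+b\sigma$ with $b$ odd, $1-q_3$ factors on twisted fixed points through a class in $\pi_1(S/2)$, and the factor $2$ kills it. The choice $S/4$ gives $4\cdot\Id=0$ for the underlying map while keeping $\tau_1=-2(1+g)$ nonzero over $\Z/4[C_2]$.

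Your induction step from $C_p$ to $G$ via the double coset formula is fine and matches the paper's.
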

Here $\Fun(BG,\Sp)$ is the category of Borel $G$-spectra, consisting
of ordinary spectra with homotopy coherent $G$-action.
Equation~\eqref{eq:null} asserts nullity
of the entire map, a stronger condition than vanishing on homotopy
groups. The examples can be chosen with $X$ already $p$-local and
$f^2=0$.

\subsection{A four-cell counterexample}\label{subsec:power-maps}

We give the construction of the four-cell counterexample in this section.

For $r>1$, let
\[
 S/r=\cofib(r:S\longrightarrow S)
\]
denote the mod-$r$ Moore spectrum. 
Let $C_p=\langle g\rangle$ denote the cyclic group of order $p$ for some prime $p$, and let $\lambda$ be the
real representation underlying the complex character
$g\mapsto e^{2\pi i/p}$. Let
\[
 a_{\lambda}:S^0\longrightarrow S^{\lambda}
\]
denote the Euler class of $\lambda$.
There is a cofiber sequence
\begin{equation}\label{eq:euler-triangle}
 S^0\xrightarrow{a_{\lambda}}S^{\lambda}\xrightarrow{i}\Cof(a_{\lambda})
       \xrightarrow{\delta}S^1,
\end{equation}
where $\Cof(a_\lambda)$ can be identified with $\Sigma S(\lambda)_+$.

Recall the Burnside ring of $C_p$
\begin{equation}\label{eq:burnside-ring}
 A(C_p)=\mathbb Z\{1,[C_p/e]\},
\end{equation}
where \[
 [C_p/e]^2=p[C_p/e], \qquad a_\lambda\cdot [C_p/e]=0.
\]
The marks homomorphism gives an injection
\begin{equation}
A(C_p)\hookrightarrow\mathbb Z^2,
 \qquad x+y[C_p/e]\longmapsto(x+py,x).
\end{equation}

Moreover let
\[ N:=1+g+\cdots+g^{p-1}\in\Z[C_p]\]
be the norm in the group ring. It satisfies
\[
(1-g)N=0,\qquad N^2=p\cdot N.
\]

Consider the map 
\begin{align*}
    q_{p+1}: S(\lambda) &\to S(\lambda)\\
        z&\mapsto z^{p+1}.
\end{align*}
In particular, it is $C_p$-equivariant at the space level. Abusing the notation, we let $q_{p+1}$ also denote its {stabilization} on $\Cof(a_{\lambda})$.

\begin{lemma}\label{lem:cellular-power}
There is a commutative
diagram of cofiber sequences:
\[
 \begin{tikzcd}[column sep=large,row sep=large]
 S^0 \arrow[r,"a_{\lambda}"] \arrow[d,"0"'] &
 S^{\lambda} \arrow[r,"i"] \arrow[d,"{-[C_p/e]}"'] &
 \Cof(a_{\lambda}) \arrow[r,"\delta"] \arrow[d,"1-q_{p+1}"] &
 S^1 \arrow[d,"0"] \\
 S^0 \arrow[r,"a_{\lambda}"'] & S^{\lambda} \arrow[r,"i"'] &
 \Cof(a_{\lambda}) \arrow[r,"\delta"'] & S^1.
 \end{tikzcd}
\]
In particular,
\begin{equation}\label{eq:power-relations}
 \begin{aligned}
 (1-q_{p+1})i&=- i [C_p/e],\\
 \delta(1-q_{p+1})&=0.
 \end{aligned}
\end{equation}
\end{lemma}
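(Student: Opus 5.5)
Since $\Cof(a_\lambda)\simeq\Sigma S(\lambda)_+$ and $q_{p+1}$ is the suspension of a based map of $S(\lambda)_+$, we construct the diagram by compatibility of $q_{p+1}$ with the cofiber sequence. The outer vertical maps are forced: $\delta(1-q_{p+1})$ is a map $\Sigma S(\lambda)_+\to S^1$. The map $\delta$ is the suspension of the collapse $S(\lambda)_+\to S^0$, and the collapse satisfies $c\circ q_{p+1}=c$ because $q_{p+1}$ is a map of unbased spaces. Hence $\delta q_{p+1}=\delta$, so $\delta(1-q_{p+1})=0$. This gives the second relation in \eqref{eq:power-relations} and justifies the right-hand vertical map $0$. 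The left-hand map $0$ on $S^0$ is then compatible: for the rotated square, $\Sigma^{-1}$ of the right square shows the zero map on $S^0$ fits, as the identity and $q_{p+1}$ both induce the identity on the bottom cell $S^0$.

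The substantive step is to compute $(1-q_{p+1})i$ and show it equals $-i[C_p/e]$. Since $\delta(1-q_{p+1})i=0$ holds trivially, the cofiber sequence shows that $(1-q_{p+1})i$ factors as $i\circ\phi$ for some $\phi\in[S^\lambda,S^\lambda]^{C_p}\cong A(C_p)$. We must identify $\phi$ up to the indeterminacy $\ker(i_*)$, which is the image of $a_\lambda\circ(-)$ from $[S^\lambda,S^0]^{C_p}$. The plan is to pin down $\phi$ by marks, i.e.\ by its underlying degree and its geometric fixed point degree.

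\emph{Geometric fixed points.} We have $S(\lambda)^{C_p}=\emptyset$, so $\Phi^{C_p}\Cof(a_\lambda)\simeq0$. The fixed-points mark of $\phi$ is therefore only determined modulo the image of $a_\lambda$. Because $a_\lambda[C_p/e]=0$, and on marks $a_\lambda$-multiples affect only the fixed-point coordinate, we may choose $\phi$ to have fixed mark $0$.

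\emph{Underlying.} On the free circle, the top cell $S^\lambda$ corresponds to $\Sigma$ of the fundamental class of $S(\lambda)$. The power map $z\mapsto z^{p+1}$ has degree $p+1$, so $1-q_{p+1}$ acts on the top cell by $1-(p+1)=-p$. Underlying, $i$ is injective on $H_2$, so the underlying mark of $\phi$ is $-p$. The marks $(-p,0)$ correspond to $-[C_p/e]$ by the formula $x+y[C_p/e]\mapsto(x+py,x)$.

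The main obstacle is making the ``up to indeterminacy'' argument rigorous. The claim that $\ker(i_*)$ on $A(C_p)$ is exactly $\mathbb Z\cdot(0,\ast)$ in marks must be checked. We verify it via the long exact sequence: $[S^\lambda,S^0]^{C_p}\xrightarrow{a_\lambda}[S^\lambda,S^\lambda]^{C_p}\xrightarrow{i_*}[S^\lambda,\Cof(a_\lambda)]^{C_p}$. An element $\psi\in[S^\lambda,S^0]^{C_p}$ has underlying degree $0$, since its underlying map goes $S^2\to S^0$. Hence $a_\lambda\psi$ has underlying mark $0$, and any element with underlying mark $0$ is a multiple of $1-[C_p/e]/p$-type classes. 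Here we take $\mathrm{tr}$-compatible generators and use $a_\lambda\cdot$ the transfer-type class, so that $a_\lambda$ times a generator of $[S^\lambda,S^0]$ gives $p-[C_p/e]$ up to sign. This last identification is the step we expect to need care; we will compare it via the Euler class relation with $\Res$ and $\Phi^{C_p}$.

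With $\phi=-[C_p/e]$, the middle square commutes, and the first relation of \eqref{eq:power-relations} follows.
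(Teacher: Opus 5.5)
Your argument for the right-hand square is correct and matches the paper: $q_{p+1}$ commutes with the collapse $S(\lambda)_+\to S^0$, so $\delta q_{p+1}=\delta$. The left square only needs $[C_p/e]a_\lambda=0$, which holds because $a_\lambda$ is null on underlying spectra.

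\textbf{The gap is in the middle square.} Suppose you pick an arbitrary $\phi$ with $(1-q_{p+1})i=i\phi$. The underlying mark $-p$ and the congruence of marks only tell you that $\phi=-[C_p/e]+k(p-[C_p/e])$ for some unknown $k\in\Z$. The vanishing of $\Phi^{C_p}\Cof(a_\lambda)$ says nothing about $k$. Your step ``we may choose $\phi$ to have fixed mark $0$'' therefore requires $p-[C_p/e]\in\operatorname{im}(a_\lambda\cdot)=\ker(i_*)$. This cannot be checked with $\Res$ and $\Phi^{C_p}$, because marks do not detect divisibility by $a_\lambda$.

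In fact $\operatorname{im}(a_\lambda\cdot)$ is the kernel of
\[
A(C_p)\to[S(\lambda)_+,S]^{C_p}\cong\pi^0\bigl((S(\lambda)/C_p)_+\bigr)\cong\Z\oplus\pi_1S,
\]
and $[C_p/e]$ goes to the transfer of the $p$-fold cover of the circle. For odd $p$ the $\pi_1S$-component vanishes, so your claim is true, though you have not proved it. At $p=2$, where $\lambda=2\sigma$, the transfer of the double cover $S^1\to\mathbb{RP}^1$ has top-cell component $\eta\neq0$. Hence $\operatorname{im}(a_{2\sigma}\cdot)=\Z(4-2[C_2/e])$ only, which is why Lemma~\ref{lem:euler-divisibility}(2) produces $4-2[C_2/e]$ rather than $2-[C_2/e]$. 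The candidate $\phi=2-2[C_2/e]$, with marks $(-2,2)$, is consistent with everything your argument establishes. Yet $i(2-2[C_2/e])+i[C_2/e]=i(2-[C_2/e])\neq0$. So your method cannot settle the middle square at $p=2$, and that is exactly the case used for $X_2$ and $f_2=2(1-q_3)$.

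\textbf{The missing idea is to build the lift geometrically instead of extracting it from exactness.} The radial extension $F_{p+1}(re^{i\theta})=re^{i(p+1)\theta}$ is a proper $C_p$-map $\lambda\to\lambda$. Its one-point compactification $\widehat F_{p+1}$, together with $q_{p+1}$ on $S(\lambda)_+$ and the identity on $S^0$, is a map of the cofiber sequence $S(\lambda)_+\to S^0\xrightarrow{a_\lambda}S^\lambda$. Rotating gives $q_{p+1}i=i\widehat F_{p+1}$ and $\delta q_{p+1}=\delta$ on the nose.

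The fixed-point mark of this particular element is visibly $1$, since $\widehat F_{p+1}$ is the identity on $(S^\lambda)^{C_p}=S^0$. Its underlying degree is $p+1$, so $\widehat F_{p+1}=1+[C_p/e]$ and $\phi=1-\widehat F_{p+1}=-[C_p/e]$. This is the paper's proof: the fixed mark $0$ comes from the geometry of the chosen lift, not from adjusting within the indeterminacy.
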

\begin{proof}
Consider the standard cofiber sequence
\[
 S(\lambda)_+\longrightarrow S^0\xrightarrow{a_\lambda}S^\lambda.
\]
The map $q_{p+1}$ on $S(\lambda)$ and the identity on $S^0$ induce
a map on its mapping cone $S^\lambda$. This map is the one-point
compactification of the radial extension of $q_{p+1}$, which we denote by
\[
 \begin{aligned}
 F_{p+1}&:\lambda\longrightarrow\lambda,\\
 F_{p+1}(re^{i\theta})&=re^{i(p+1)\theta},\\
 F_{p+1}(0)&=0.
 \end{aligned}
\]
It is proper and $C_p$-equivariant. Taking one-point compactification,
$\widehat F_{p+1}$ fixes $0$ and $\infty$, so
\[
 \widehat F_{p+1}a_\lambda=a_\lambda.
\]
Using the identification $[S^\lambda,S^\lambda]^{C_p}\cong A(C_p)$,
we compute its stable class from its degrees on underlying and
$C_p$-fixed points. The underlying map has degree $p+1$, since
$q_{p+1}$ winds the circle $p+1$ times. On $C_p$-fixed points,
$S^{\lambda^{C_p}}=S^0$, and $\widehat F_{p+1}$ is the identity.
These degrees are $p+1$ and $1$, respectively. Since the map
$A(C_p)\hookrightarrow\mathbb Z^2$ above is injective, we obtain
\[
 [\widehat F_{p+1}]=1+[C_p/e].
\]
\par

Rotating the cofiber sequence gives a map of~\eqref{eq:euler-triangle}
with components $1$, $\widehat F_{p+1}$, $q_{p+1}$, and $1$,
where the third component is $\Sigma(q_{p+1})_+$ under
$\Cof(a_\lambda)\simeq\Sigma S(\lambda)_+$.
Consequently, the middle and right squares give, respectively,
\[
 \begin{aligned}
 q_{p+1}i&=i(1+[C_p/e]),\\
 \delta q_{p+1}&=\delta.
 \end{aligned}
\]
Subtracting this map of cofiber sequences from the identity proves
the two asserted relations:
\[
 \begin{aligned}
 (1-q_{p+1})i&=i-i(1+[C_p/e])=-i[C_p/e],\\
 \delta(1-q_{p+1})&=\delta-\delta=0.
 \end{aligned}
\]
\end{proof}

\Needspace{8\baselineskip}

 We are now ready to introduce our four-cell model for the counterexample.

\begin{definition}\label{eq:small-models}

For $p$ odd, define
\[X_p: =\Cof(a_{\lambda})\wedge S/p,\]
\[f_p =1-q_{p+1}: X_p \to X_p.\]
For $p=2$, let $\sigma$ denote the $C_2$-sign representation, and define
\[X_2: =\Cof(a_{2\sigma})\wedge S/4,\]
\[f_2=2\cdot(1-q_3): X_2\to X_2.\]
\end{definition}
{Figure~\ref{fig:euler-cells} shows cell diagrams of $X_p$ and $X_2$.
Circles denote cells, with their dimensions labeled inside.
Each oval containing two cells denotes a Moore spectrum.
Lines without arrows denote attaching maps, labeled alongside,
and horizontal arrows denote morphisms.}

\begin{figure}[!htbp]
\centering
\newcommand{\eulercells}[5]{%
\begin{tikzpicture}[x=1cm,y=1cm,scale=.94,transform shape,font=\small,
 dimcell/.style={circle,draw,fill=white,minimum size=.64cm,inner sep=.6pt,font=\tiny},
 moore/.style={rectangle,minimum width=1.34079cm,minimum height=2.2cm,inner sep=0pt}]
 \foreach \x/\n in {0/s,5.5/t} {
  \node[dimcell] (\n0) at (\x-0.08031,-0.03539) {$#2$};
  \node[dimcell] (\n1) at (\x-0.53539,1.21493) {$#2+1$};
  \node[dimcell] (\n2) at (\x+0.76796,2.32913) {$1$};
  \node[dimcell] (\n3) at (\x+0.31288,3.57945) {$2$};
  \draw[attachment] (\n1) -- node[right=3pt] {$#1$} (\n0);
  \draw[attachment] (\n3) -- node[right=3pt] {$#1$} (\n2);
  \draw[attachment] (\n3) -- node[pos=.60,left=3pt,fill=white,inner sep=1pt] {$a_{#2}$} (\n1);
  \draw[attachment] (\n2) -- node[pos=.40,right=3pt,fill=white,inner sep=1pt] {$a_{#2}$} (\n0);
  \node[moore] (\n B) at (\x-0.30785,0.58977) {};
  \node[moore] (\n T) at (\x+0.54042,2.95429) {};
  \draw[rotate around={20:(\n B.center)}] (\n B.center) ellipse [x radius=.64cm,y radius=1.3cm];
  \draw[rotate around={20:(\n T.center)}] (\n T.center) ellipse [x radius=.64cm,y radius=1.3cm];
  \node at (\x-0.30785,-.95) {$#4$};
 }
 \draw[cellmap] (sT.east) -- node[above=3pt] {$0$} (tT.west);
 \draw[cellmap] (sB.east) -- node[above=3pt] {$#3$} (tB.west);
 \node[anchor=east] at (-1.8,1.62) {$#5:$};
\end{tikzpicture}}
\eulercells{p}{\lambda}{-[C_p/e]}{X_p}{f_p}

\vspace{3mm}
\eulercells{4}{2\sigma}{-2\cdot [C_p/e]}{X_2}{f_2}
\caption{A cell diagram for $X_p$ and $X_2$.}
\label{fig:euler-cells}
\end{figure}

We also note the following.
\begin{lemma}\label{lem:free-cell-power}
There is a cofiber sequence of free cells: 
\begin{equation}\label{eq:circle-triangle}
 \Sigma(C_p)_+\xrightarrow{1-g}\Sigma(C_p)_+
 \xrightarrow{j}\Cof(a_{\lambda})\xrightarrow{\partial}\Sigma^2(C_p)_+.
\end{equation}
Here $g$ acts by right translation. Moreover, there is a commutative diagram
of cofiber sequences:
\[
\begin{tikzcd}[column sep=large,row sep=large]
 \Sigma(C_p)_+ \arrow[r,"1-g"] \arrow[d,"-\Sigma N"'] &
 \Sigma(C_p)_+ \arrow[r,"j"] \arrow[d,"0"'] &
 \Cof(a_\lambda) \arrow[r,"\partial"] \arrow[d,"1-q_{p+1}"] &
 \Sigma^2(C_p)_+ \arrow[d,"-\Sigma^2N"] \\
 \Sigma(C_p)_+ \arrow[r,"1-g"'] &
 \Sigma(C_p)_+ \arrow[r,"j"'] &
 \Cof(a_\lambda) \arrow[r,"\partial"'] & \Sigma^2(C_p)_+.
\end{tikzcd}
\]
In particular,
\begin{equation}\label{eq:free-power-relations}
 (1-q_{p+1})j=0,\qquad
 \partial(1-q_{p+1})=-(\Sigma^2N)\partial.
\end{equation}
\end{lemma}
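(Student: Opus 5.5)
The plan is to work unstably with the standard free $C_p$-CW structure on the circle $S(\lambda)$, and then suspend and stabilize. Since $C_p$ acts freely on $S(\lambda)$, the circle has a cell structure with one free $0$-cell $C_p\times e^0$ and one free $1$-cell $C_p\times e^1$. We take the $0$-cells to be the $p$th roots of unity and the $1$-cells to be the arcs between consecutive roots. The $1$-cell attached along $\{h\}\times\partial e^1$ runs from $h$ to $hg$, so the cellular boundary is $1-g$ with $g$ acting by right translation. This gives the cofiber sequence $(C_p)_+\xrightarrow{1-g}(C_p)_+\to S(\lambda)_+/(\text{basepoint disjoint})$. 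More precisely, $S(\lambda)/(C_p)\simeq \Sigma (C_p)_+$-skeleton quotient, so stably
\[
 (C_p)_+\to S(\lambda)_+\to \Sigma(C_p)_+\xrightarrow{\Sigma(1-g)}\Sigma (C_p)_+ ,
\]
with connecting map identified with $1-g$ up to sign and orientation conventions. Suspending once and using $\Cof(a_\lambda)\simeq\Sigma S(\lambda)_+$ yields \eqref{eq:circle-triangle}. One could equally rotate to place the $\Sigma(C_p)_+$'s as stated, absorbing any sign into the choice of generator of the top cell.

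For the map of cofiber sequences, I would make $q_{p+1}$ cellular. The map $z\mapsto z^{p+1}$ sends each $p$th root of unity $\zeta$ to $\zeta^{p+1}=\zeta$, so it is the identity on the $0$-skeleton. This gives the component $\Sigma(C_p)_+\to\Sigma(C_p)_+$ on the bottom cells as $1$, and therefore as $0$ for $1-q_{p+1}$, which is the second vertical map. On the top cells, each arc from $\zeta$ to $\zeta e^{2\pi i/p}$ is mapped, relative to its endpoints, to the path from $\zeta$ winding through angle $2\pi(p+1)/p$. That path traverses the arc $[\zeta,\zeta g]$ twice and every other arc once. So the induced map on $1$-cells is $1+N$, and $1-q_{p+1}$ induces $-N$ on the top cells. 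Equivariance follows because $q_{p+1}$ commutes with rotation by $p$th roots of unity, which holds since $(\zeta z)^{p+1}=\zeta z^{p+1}$.

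The main obstacle is promoting this cellular chain computation to an honest map of stable cofiber sequences, that is, identifying the outer vertical components as $-\Sigma N$ and $-\Sigma^2N$ in $[\Sigma(C_p)_+,\Sigma(C_p)_+]^{C_p}\cong\Z[C_p]$. I would handle it as follows. A cellular map of free $C_p$-CW complexes induces maps on skeleta and on subquotients, giving a map of cofiber sequences; this is naturality of the skeletal filtration. Equivariant maps between wedges of free cells of the same dimension are determined by degrees, because $[\Sigma^k(C_p)_+,\Sigma^k(C_p)_+]^{C_p}\cong\pi_0^e(\Sigma^\infty_+C_p)=\Z[C_p]$. So the chain-level matrix entries $1+N$ and $1$ determine the maps on subquotients exactly.

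Subtracting this map from the identity map of the sequence gives the displayed diagram with components $-\Sigma N$, $0$, $1-q_{p+1}$, and $-\Sigma^2N$. The last entry is the suspension of the first, as forced by rotation. Reading off the middle and right squares then gives \eqref{eq:free-power-relations}: $(1-q_{p+1})j=j\cdot0=0$ and $\partial(1-q_{p+1})=-(\Sigma^2N)\partial$.

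Finally, I would double-check the sign and the direction of $g$ (left versus right translation) against $N$. Since $N$ is symmetric under $g\mapsto g^{-1}$, the answer $-N$ is insensitive to these choices. The only convention to fix is the orientation of the $1$-cells, which makes the boundary $1-g$ rather than $g-1$, and this can be absorbed by an automorphism of the cofiber.
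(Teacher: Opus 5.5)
Your proposal is correct and is essentially the paper's proof: you use the free cell structure with vertices at the $p$th roots of unity and attaching map $1-g$, observe that $q_{p+1}$ fixes the vertices and sends each oriented edge to itself followed by one full turn (giving $1+N$ on one-cells), and subtract the resulting map of cofiber sequences from the identity. The differences are that you build the cell structure by hand where the paper cites it, and you spell out a step the paper leaves implicit: stable equivariant self-maps of $\Sigma^k(C_p)_+$ form $\Z[C_p]$, so the cellular chain map determines them.
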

\begin{proof}
For the free-cell structure and attaching map $1-g$,
see~\cite[Section~2.4]{BHZ}. The map $q_{p+1}$ fixes the vertices,
so $q_{p+1}j=j$. It sends the chosen oriented edge to itself
followed by one full turn, giving $1+N$ on the relative one-cells.
The induced map of cofiber sequences therefore has components
$1+\Sigma N$, $1$, $q_{p+1}$, and $1+\Sigma^2N$. In particular,
$\partial q_{p+1}=(1+\Sigma^2N)\partial$.
Subtracting from the identity gives the diagram and both relations.
\end{proof}

Since
$\Phi^e(\Cof(a_\lambda))$ splits as $\Sigma S\vee\Sigma^2S$, we have
\begin{equation}\label{eq:underlying-power}
 \Phi^e(q_{p+1})=\begin{pmatrix}1&0\\0&p+1\end{pmatrix}.
\end{equation}

\subsection{Proof of Theorem~\ref{thm:main} for $C_p$}\label{subsec:prime-proof}

The proof relies on two propositions, whose proofs will be given in
Subsection~\ref{sec:prelim} and Subsection~\ref{sec:chains}, respectively.

The first proposition claims that $f_p$ induces trivial maps on the
$H$-fixed points of the mapping spectrum
$\operatorname{Map}(S^V,\Res_H^{C_p}X_p)$, and in particular induces trivial
maps on $\pi_V^H$, for every $H\leq C_p$ and $V\in\RO(H)$.

\begin{proposition}\label{prop:twisted-nullity}
The power maps on the spectra in Definition~\ref{eq:small-models} satisfy the
following statements.
\begin{enumerate}
\item For an odd prime $p$,
\[
 \bigl(S^{-V}\wedge\Res_H^{C_p}f_p\bigr)^H=0
 \qquad(H\leq C_p,\ V\in\RO(H)).
\]
\item At $p=2$,
\[
 \bigl(S^{-V}\wedge\Res_H^{C_2}f_2\bigr)^H=0
 \qquad(H\leq C_2,\ V\in\RO(H)).
\]
\end{enumerate}
\end{proposition}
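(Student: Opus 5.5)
The plan is to treat the two subgroups separately, and in each case reduce nullity of the whole map to the vanishing of a multiple of the identity on a Moore spectrum. For $p$ odd this is $p\cdot\Id_{S/p}=0$. For $p=2$ it is $4\cdot\Id_{S/4}=0$, and this is precisely why $f_2$ carries the extra factor $2$ and uses $S/4$: the identity of $S/2$ has order $4$, not $2$.

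\textbf{The case $H=e$.} Every $V\in\RO(e)$ is an integer, so it suffices to show that the underlying map $\Phi^e(f_p)$ is null. By \eqref{eq:underlying-power}, $\Phi^e(\Cof(a_\lambda))\simeq\Sigma S\vee\Sigma^2S$ and
\[
\Phi^e(1-q_{p+1})=\begin{pmatrix}0&0\\0&-p\end{pmatrix}.
\]
Smashing with $S/p$ gives $\operatorname{diag}(0,-p\cdot\Id_{\Sigma^2S/p})$, which is null for $p$ odd. For $p=2$, one works with $\Cof(a_{2\sigma})\simeq\Sigma S(2\sigma)_+$ and the analogue of \eqref{eq:underlying-power}, where the underlying degree of $q_3$ on the top cell is $3$. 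This gives $2\cdot\operatorname{diag}(0,-2)$ smashed with $S/4$, which is $-4\cdot\Id$ on the top summand and hence null.

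\textbf{The case $H=C_p$.} The aim is a factorization of $f_p$ through free cells. By \eqref{eq:free-power-relations} we have $(1-q_{p+1})j=0$, so the cofiber sequence \eqref{eq:circle-triangle} lets us write
\[
1-q_{p+1}=k\circ\partial
\qquad\text{for some } k:\Sigma^2(C_p)_+\to\Cof(a_\lambda).
\]
Smashing with $S^{-V}\wedge S/p$ and taking $C_p$-fixed points, we use
\[
(S^{-V}\wedge C_{p+}\wedge Z)^{C_p}\simeq S^{-|V|}\wedge Z.
\]
Under this identification, the fixed-point map $\bigl(S^{-V}\wedge f_p\bigr)^{C_p}$ factors through $\Sigma^{2-|V|}S/p$. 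By the induction adjunction, $k$ corresponds to a nonequivariant map $k':S^2\to\Phi^e\Cof(a_\lambda)$.

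I then determine $k'$ from two pieces of information. First, the underlying formula: $\Phi^e(f)$ restricted to the top cell is $-p$ times the inclusion. Second, the relation
\[
\partial k\partial=-(\Sigma^2N)\partial .
\]
Together these should force $k'$ to be $-1$ times the top-cell inclusion, up to indeterminacy coming from maps that factor through $j$. With this, the composite on fixed points becomes an explicit transfer/restriction composite. Using $N^2=pN$, equivalently the identity $\mathrm{res}\circ\mathrm{tr}=N$ on free cells, I expect it to equal a multiple of $p$ times a map of $\Sigma^{2-|V|}S/p$. That multiple is null after smashing with $S/p$.

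As a cross-check, the dual factorization $f=i\circ h$ from $\delta(1-q_{p+1})=0$ in Lemma~\ref{lem:cellular-power}, combined with $f i=-i[C_p/e]$ and $[C_p/e]=\mathrm{tr}\circ\mathrm{res}$, should give the same conclusion on geometric and categorical fixed points.

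\textbf{Main obstacle.} The hard part is controlling the indeterminacy in the choice of $k$, and making sure no exotic term survives. The danger is a term involving $\eta$ or a Moore-spectrum attaching map that is not a multiple of $p$, which could come from the non-split structure $X_p=\Cof(a_\lambda)\wedge S/p$. My first attempt is to choose $k$ explicitly at the space level: the power map $q_{p+1}$ differs from the identity by "one extra turn", which is visibly the composite of collapsing onto the free $2$-cell followed by the cell's characteristic map. With this choice, the resulting fixed-point map can be computed cell by cell, with no indeterminacy to manage. For $p=2$, the same space-level computation works with $\Cof(a_{2\sigma})$. There the possible $\eta$-contributions are annihilated because $2\cdot\Id_{S/4}$ factors as $S/4\to S\xrightarrow{\cdot 2}S\to S/4$ on the relevant cells, so the extra factor of $2$ in $f_2$ together with $4=0$ on $S/4$ kills the residual terms.
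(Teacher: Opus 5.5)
Your treatment of $H=e$ matches the paper's. Factoring $1-q_{p+1}=k\partial$ through the top free cell is also the information behind the paper's computation at $H=C_p$. But the proposal never supplies the computation that drives the $C_p$ case: how right translation by $g$ acts on $(S^{-V}\wedge (C_p)_+)^{C_p}\simeq S^{-|V|}$. Its degree is $1$ for odd $p$, and $(-1)^b$ for $p=2$ with $V=a+b\sigma$ (Lemma~\ref{lem:free-twist}).

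For odd $p$ this makes $1-g$ vanish on twisted fixed points, so $(S^{-V}\wedge\Cof(a_\lambda))^{C_p}\simeq S^{1-|V|}\vee S^{2-|V|}$, and $N$ acts by $p$. The two relations \eqref{eq:free-power-relations} then give the matrix $\begin{pmatrix}0&\beta\\0&-p\end{pmatrix}$ with $\beta\in\pi_1S=\Z/2$. The exotic term you worry about is just $\beta$, and it dies on $S/p$ for a trivial reason: $[\Sigma S/p,S/p]=0$, since $\beta$ is $2$-torsion and $p$ is odd. No space-level choice of $k$ is needed. Indeterminacy in $k$ is irrelevant anyway, since $k\partial=1-q_{p+1}$ for every choice of $k$.

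At $p=2$ there is a genuine gap. When $b$ is odd, $g$ acts by $-1$, so $1-g$ acts by $2$ and $N=1+g$ acts by $0$. The twisted fixed points of $\Cof(a_{2\sigma})$ are then $\Sigma^{1-|V|}S/2$, not a wedge of spheres, and there is no ``$-p$ on the top cell'' term to exploit. What remains is $h\circ\partial$ with $h\in\pi_1(S/2)=\Z/2$. This is not a priori killed by smashing with $S/4$: for instance $i\eta\partial=2\cdot\Id_{S/2}$ survives, since $2\cdot\Id$ is nonzero on $S/2\wedge S/4\simeq S/2\vee\Sigma S/2$. It is the extra factor $2$ in $f_2$ that kills this term, via $2h=0$, already before smashing with $S/4$.

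When $b$ is even, the matrix is $\begin{pmatrix}0&\beta\\0&-2\end{pmatrix}$. The factor $2$ kills $\beta$, and $4\cdot\Id_{S/4}=0$ kills the entry $-4$. Your proposal does not see this parity split.

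The mechanism you offer instead is false. Every map $S/4\to S$ is $0$ or $\eta\partial$, so any composite $S/4\to S\xrightarrow{2}S\to S/4$ is zero. But $2\cdot\Id_{S/4}\neq0$, since it induces multiplication by $2$ on $H_0(S/4;\Z)=\Z/4$.
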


{The second proposition detects $f_p$ on ordinary homology spectra
while retaining the homotopy coherent $C_p$-action. The resulting
map of chains is nonzero in the derived category of modules over
the group ring, so $f_p$ itself is nonzero.}

\Needspace{8\baselineskip}
\begin{proposition}\label{lem:chains}\label{lem:two-term}
\leavevmode
\begin{enumerate}
\item Let $p$ be odd and let
\[
 C=\bigl(0\longrightarrow\F_p[C_p]\xrightarrow{1-g}
                    \F_p[C_p]\longrightarrow0\bigr)
\]
be concentrated in degrees one and zero. Define $\tau:C\longrightarrow C$ by
\[
 \tau_1=-(1+g+\cdots+g^{p-1}),\qquad \tau_0=0.
\]
Then, $H\F_p\wedge\Phi^e(X_p)$ with a residual $C_p$-action is represented by
$\Sigma C\oplus\Sigma^2C$, and $H\F_p\wedge\Phi^e(f_p)$ is represented by
$\Sigma\tau\oplus\Sigma^2\tau$. This map is nonzero in
$\D(\F_p[C_p])$.
\item Let $p=2$ and let
\[
 C=\bigl(0\longrightarrow(\Z/4)[C_2]\xrightarrow{1-g}
                    (\Z/4)[C_2]\longrightarrow0\bigr)
\]
be concentrated in degrees one and zero. Define $\tau:C\longrightarrow C$ by
\[
 \tau_1=-2\cdot(1+g),\qquad \tau_0=0.
\]
Then, $H\Z/4\wedge\Phi^e(X_2)$ with a residual $C_2$-action is represented by
$\Sigma C\oplus\Sigma^2C$, and $H\Z/4\wedge\Phi^e(f_2)$ is represented by
$\Sigma\tau\oplus\Sigma^2\tau$. This map is nonzero in
$\D((\Z/4)[C_2])$.
\end{enumerate}
In both cases, $f_p$ remains nonzero in the Borel category, although
the induced map on ordinary homology groups is zero.
\end{proposition}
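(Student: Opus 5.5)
The plan is to use the free-cell cofiber sequence of Lemma~\ref{lem:free-cell-power} to compute the underlying spectrum with its coherent action. Then we compute the chain-level model of $1-q_{p+1}$, and finally check nonvanishing in the derived category by an explicit homotopy obstruction.

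\textbf{Step 1: model the object.} The sequence~\eqref{eq:circle-triangle} exhibits $\Cof(a_\lambda)$ as the cofiber of $1-g:\Sigma(C_p)_+\to\Sigma(C_p)_+$. After applying $\Phi^e$ and smashing with $H\F_p$, this is a cofiber of free $\F_p[C_p]$-modules shifted by one. Hence $H\F_p\wedge\Phi^e\Cof(a_\lambda)$ is represented by $\Sigma C$, with the two-term complex $C$ as stated. Smashing with $S/p$ multiplies by $H\F_p\wedge S/p\simeq H\F_p\oplus\Sigma H\F_p$ with trivial action. This gives $\Sigma C\oplus\Sigma^2C$, and $f_p$ acts diagonally since $f_p=(1-q_{p+1})\wedge\Id_{S/p}$. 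The same argument with $\Z/4$ and $S/4$ handles $p=2$; here $H\Z/4\wedge S/4\simeq H\Z/4\oplus\Sigma H\Z/4$ because $4=0$ in $\Z/4$. For $p=2$ one takes the sequence for $\lambda=2\sigma$, which is the real form of the sign character, and the factor $2$ multiplies $\tau$.

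\textbf{Step 2: identify $\tau$.} Lemma~\ref{lem:free-cell-power} gives the map of cofiber sequences with components $-\Sigma N$ on the source cell, $0$ on the target cell, and $1-q_{p+1}$ on the cofiber. Under the cellular-chain dictionary, the induced chain map on $C$ is $\tau_1=-N$, $\tau_0=0$. This is a chain map because $(1-g)N=0$. One should remark that a map of cofiber sequences determines the chain map on cofibers only up to the usual indeterminacy. However, the cellular filtration is preserved at space level by $q_{p+1}$, and $q_{p+1}$ is a cellular map of $S(\lambda)$. Therefore cellular chains functorially give exactly this chain map, and this pins down the representative. For $p=2$ we get $\tau_1=-2(1+g)$.

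\textbf{Step 3: nonvanishing.} This is the main check. Since $C$ is a bounded complex of projective $\F_p[C_p]$-modules, morphisms in $\D$ are chain maps modulo homotopy. A null-homotopy consists of a single component $h:C_0\to C_1$ satisfying
\begin{itemize}
\item $h(1-g)=\tau_1=-N$, and
\item $(1-g)h=\tau_0=0$.
\end{itemize}
An element $h\in\F_p[C_p]$ multiplication satisfies $h(1-g)=-N$ only if $N\in(1-g)\F_p[C_p]$. Writing $\F_p[C_p]=\F_p[x]/x^p$ with $x=g-1$, we have $N=x^{p-1}$, and the ideal $(1-g)$ is $(x)$. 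So the first condition can be met, e.g.\ by $h=x^{p-2}$ up to sign. But then $(1-g)h=\pm x^{p-1}\neq0$, so the second condition fails. More carefully, the first equation forces $h\equiv \mp x^{p-2}\pmod{x^{p-1}}$, and then $xh=\mp x^{p-1}\ne0$. Hence no homotopy exists, and $\Sigma\tau\oplus\Sigma^2\tau\neq0$.

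For $p=2$ over $R=\Z/4$, the first condition reads $h(1-g)=-2(1+g)$. Writing $h=a+bg$, this forces $a-b=-2$. The second condition $(1-g)h=0$ forces $a=b$. Together these require $0=-2$ in $\Z/4$, a contradiction. The case $p=2$ is exactly where working mod $4$ rather than mod $2$ matters: mod $2$ we would have $2=0$.

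\textbf{Conclusion.} Since $\D(\F_p[C_p])$ receives a functor from $\Fun(BC_p,\Sp)$ via $H\F_p\wedge-$ (and similarly for $\Z/4$), the Borel image of $f_p$ is nonzero. On homology groups the map vanishes because $\tau$ induces $0$ on $H_*(C)$. Indeed, $H_1(C)=\ker(1-g)=(N)$, and $\tau_1(N)=-N^2=-pN=0$; in the $p=2$ case, $-2(1+g)\cdot(1+g)=-4(1+g)=0$. The source of $\tau_0$ is zero in any case.
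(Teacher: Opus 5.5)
Your proposal is correct and takes essentially the same route as the paper: you model the chains via the free-cell sequence of Lemma~\ref{lem:free-cell-power}, tensor with the split Moore complex, and rule out a null-homotopy $h$ from the two equations $h(1-g)=\tau_1$ and $(1-g)h=0$. The paper shortcuts your computation in $\F_p[x]/x^p$ by observing that the group ring is commutative, so the two left-hand sides coincide and would force $\tau_1=0$; it also gets the vanishing on homology from the underlying nullity in Proposition~\ref{prop:twisted-nullity}, where you check $H_*(C)$ directly.
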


\begin{proof}[Proof of Theorem~\ref{thm:main} for $C_p$]
By Lemma~\ref{lem:free-cell-power}, $\Cof(a_\lambda)$ has two free
$C_p$-orbit cells. Smashing with the two-cell Moore spectrum $S/p$
for odd $p$, or $S/4$ for $p=2$, gives four free orbit cells in $X_p$.
Here $\lambda=2\sigma$ when $p=2$.
By Proposition~\ref{prop:twisted-nullity},
\[
 \bigl(S^{-V}\wedge\Res_H^{C_p}f_p\bigr)^H=0
 \qquad(H\leq C_p,\ V\in\RO(H)).
\]
Taking $\pi_0$ gives $\pi_V^H(f_p)=0$ for every $H$ and $V$.
Proposition~\ref{lem:chains} shows that $f_p$ is nonzero, even in
the Borel category. The maps are therefore counterexamples to all
three generating hypotheses.

The spectra are finite and already $p$-local,
so they also give the $p$-local counterexamples. For odd $p$,
Lemma~\ref{lem:moore}, smashed with $\Cof(a_\lambda)$, gives
$p\cdot\Id_{X_p}=0$, so the additive order of $f_p$ is exactly $p$.
At two, the same lemma gives $4\cdot\Id_{X_2}=0$. Together with
Definition~\ref{eq:small-models}, this gives $2\cdot f_2=4\cdot(1-q_3)=0$,
so the additive order of $f_2$ is exactly two.

\end{proof}

\subsection{A cell diagram interpretation}\label{subsec:euler-view}

We provide the intuition for Propositions~\ref{prop:twisted-nullity}
and~\ref{lem:chains} in this section.
{The displayed cofiber sequence and
Figures~\ref{fig:euler-divisibility-cells}--\ref{fig:toda-divisibility-cells}
use the odd-prime notation. At $p=2$, replace $S/p$ by $S/4$,
$\lambda$ by $2\sigma$, $b_\lambda$ by $\eta_{C_2}^{\,2}$,
and $-[C_p/e]$ by $-2[C_2/e]$.}

Consider the cofiber sequence~\eqref{eq:euler-triangle} smashed with $S/p$:
\[
 S/p\xrightarrow{a_\lambda}S^\lambda\wedge S/p
 \xrightarrow{i}X_p\xrightarrow{\delta}\Sigma S/p.
\]
For a map $\ell\in\pi_V^{C_p}X_p$, either its image $\delta\ell$
in the top Moore spectrum $\Sigma S/p$ is nonzero, or exactness
makes $\ell$ factor through the bottom Moore spectrum
$S^\lambda\wedge S/p$. First suppose that $\ell$ maps into the
bottom part, that is, $\delta\ell=0$.

\begin{figure}[!htbp]
\centering
\begin{tikzpicture}[x=1cm,y=1cm,font=\small,
 moore/.style={rectangle,minimum width=.65cm,minimum height=.9cm,inner sep=0pt},
 sphere/.style={circle,draw,minimum size=.65cm,inner sep=1pt}]
 \foreach \x/\n in {0/s,2.8/t,10.0/r} {
  \node[moore] (\n B) at (\x,0) {};
  \node[moore] (\n T) at (\x,1.8) {};
  \draw (\n B.center) ellipse [x radius=.325cm,y radius=.45cm];
  \draw (\n T.center) ellipse [x radius=.325cm,y radius=.45cm];
  \node[above=4pt] at (\n T.north) {$\Sigma S/p$};
  \node[below=4pt] at (\n B.south) {$S^\lambda\wedge S/p$};
  \draw[attachment] (\n T.south) -- node[right=3pt] {$a_\lambda$} (\n B.north);
 }
 \node[sphere] (vl) at (-1.55,0) {$S^V$};
 \draw[cellmap] (vl.east) -- node[above=3pt] {$w$} (sB.west);
 \draw[cellmap] (sB.east) -- node[above=3pt] {$-[C_p/e]$} (tB.west);
 \node at (5.1,1.15) {Lemma~\ref{lem:euler-divisibility}};
 \node at (5.1,.7) {$\Longrightarrow$};
 \node[sphere] (vr) at (6.8,0) {$S^V$};
 \draw[cellmap] (vr.east) -- node[above=3pt] {$-[C_p/e]w=a_\lambda b_\lambda w$} (rB.west);
\end{tikzpicture}
\caption{{Maps factor through the bottom Moore spectrum (odd $p$).}}
\label{fig:euler-divisibility-cells}
\end{figure}

\FloatBarrier

\begin{lemma}\label{lem:euler-divisibility}
\leavevmode
\begin{enumerate}
\item Let $p$ be odd. There is a unique
$b_\lambda\in\pi_\lambda^{C_p}S_{(p)}$ with
\[
 a_{\lambda}b_\lambda=p-[C_p/e].
\]
After smashing with $S/p$, this gives
$[C_p/e]=-a_{\lambda}b_\lambda$ as a map on $S^\lambda\wedge S/p$.
\item Let $p=2$. Then
\[
 a_{2\sigma}\eta_{C_2}^{\,2}=4-2\cdot [C_p/e].
\]
After smashing with $S/4$, this gives
$2\cdot [C_p/e]=-a_{2\sigma}\eta_{C_2}^{\,2}$ as a map on $S^{2\sigma}\wedge S/4$.
Here $\eta_{C_2}\in\pi_\sigma^{C_2}S$ has the convention
$a_\sigma\eta_{C_2}=2-[C_p/e]$.
\end{enumerate}
\end{lemma}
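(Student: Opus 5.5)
The plan is to use the cofiber sequence obtained by smashing $S^0\to S^\lambda$ with the stable homotopy of $S^\lambda$, and to compute with the tom Dieck splitting or the isotropy separation sequence. First, identify $\pi_\lambda^{C_p}S=[S^\lambda,S^\lambda]$ after a shift. The cofiber sequence \eqref{eq:euler-triangle} gives an exact sequence
\[
 [S^\lambda,S^0]^{C_p}\xrightarrow{a_\lambda\circ}[S^\lambda,S^\lambda]^{C_p}\xrightarrow{i\circ}[S^\lambda,\Cof(a_\lambda)]^{C_p}.
\]
The middle group is $A(C_p)$. So we need to show two things: that $p-[C_p/e]$ maps to zero under $i\circ$ (at least $p$-locally), and that $a_\lambda\circ$ is injective on $\pi_\lambda^{C_p}S_{(p)}$.

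For existence, compute $i\circ(x+y[C_p/e])$ using Lemma~\ref{lem:free-cell-power}. Since $\Cof(a_\lambda)$ is built from free cells, $[S^\lambda,\Cof(a_\lambda)]^{C_p}$ is computed by the underlying nonequivariant data, namely homotopy orbits of the free cells. Its relevant part is the bottom group $\pi_2$ of $\Sigma^2(C_p)_+$ modulo the image of $1-g$, which is $\Z/(\text{norm-coinvariants})\cong\Z$ after $\partial$.

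A cleaner route uses the marks. A class $u\in A(C_p)$ is divisible by $a_\lambda$ iff it restricts to zero along $i$, iff its underlying degree vanishes in the appropriate sense. On underlying spectra $a_\lambda$ is null, and $S^\lambda$ is $S^2$. Concretely, $\Phi^{C_p}$ sends $a_\lambda$ to the identity and restriction sends it to $0$. The class $p-[C_p/e]$ has underlying degree $p-p=0$ and fixed-point degree $p$. Underlying degree zero should be exactly the obstruction: the composite $S^\lambda\to\Cof(a_\lambda)\xrightarrow{}\Sigma^2(C_p)_+$ followed by the identification of $[S^\lambda,\Sigma^2 (C_p)_+]^{C_p}=\pi_0S=\Z$ is the underlying degree. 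One must also check that the map into $\Sigma(C_p)_+$-cell part vanishes, i.e.\ that $[S^\lambda,\Sigma (C_p)_+]^{C_p}\cong\pi_1 S=\Z/2$ contributes nothing $p$-locally for odd $p$. Hence $p-[C_p/e]$ lifts, giving $b_\lambda$.

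For uniqueness, injectivity of $a_\lambda\circ$ needs the preceding term $[S^\lambda,\Sigma^{-1}\Cof(a_\lambda)]^{C_p}$. By the free-cell description, this term is built from $\pi_1S$ and $\pi_2S$, which vanish $p$-locally for odd $p$. So uniqueness holds in $\pi_\lambda^{C_p}S_{(p)}$. The main obstacle is this careful bookkeeping of the free-cell long exact sequence. Using Lemma~\ref{lem:free-cell-power}, I would compute $[S^{\lambda+k},\Cof(a_\lambda)]^{C_p}$ as homology of $\pi_*S\otimes(\Z[C_p]\xrightarrow{1-g}\Z[C_p])$ with appropriate shifts, and check that only $\pi_0S$ enters.

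Smashing with $S/p$ then gives $a_\lambda b_\lambda=-[C_p/e]$, since $p=0$ on $S/p$.

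For $p=2$, the plan is a direct mark computation. $\eta_{C_2}^2\in\pi_{2\sigma}^{C_2}S$ and $a_{2\sigma}=a_\sigma^2$, so
\[
a_{2\sigma}\eta_{C_2}^2=(a_\sigma\eta_{C_2})^2=(2-[C_2/e])^2=4-4[C_2/e]+2[C_2/e]=4-2[C_2/e],
\]
using commutativity of the graded ring. Reducing mod $4$ gives the stated identity.
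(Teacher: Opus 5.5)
Your proposal is correct and is essentially the paper's argument. For odd $p$ you run the Euler-triangle exact sequence covariantly via $[S^\lambda,-]^{C_p}$, so the relation comes out directly as the composite $a_\lambda\circ b_\lambda$ on $S^\lambda$, whereas the paper uses $[-,S_{(p)}]^{C_p}$; the inputs are the same (the free-cell structure of $\Cof(a_\lambda)$, $\pi_1S_{(p)}=\pi_2S_{(p)}=0$, and the underlying degree as the only obstruction). For $p=2$ you square $a_\sigma\eta_{C_2}=2-[C_2/e]$ using $[C_2/e]^2=2[C_2/e]$ instead of quoting $a_\sigma^2\eta_{C_2}=2a_\sigma$; this is equivalent, and no commutativity sign arises because $a_\sigma\eta_{C_2}\in\pi_0^{C_2}S$ is central. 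Two small fixes: $\pi_\lambda^{C_p}S$ is $[S^\lambda,S^0]^{C_p}$, not $[S^\lambda,S^\lambda]^{C_p}$, and the mod-$4$ step needs $4\cdot\Id_{S/4}=0$ from Lemma~\ref{lem:moore}.
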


\begin{proof}
For odd $p$, apply
$[-,S_{(p)}]^{C_p}$ to~\eqref{eq:euler-triangle}.
By Lemma~\ref{lem:free-cell-power} and the fact that
$\pi_1S_{(p)}=\pi_2S_{(p)}=0$, we have the exact sequence
\[
 0\longrightarrow\pi_\lambda^{C_p}S_{(p)}
 \xrightarrow{a_\lambda}A(C_p)_{(p)}
 \xrightarrow{\operatorname{rank}}\Z_{(p)}.
\]
Since the kernel of the rank map is $\Z_{(p)}(p-[C_p/e])$, there exists
$b_\lambda$ satisfying $a_\lambda b_\lambda=p-[C_p/e]$. Injectivity of
$a_\lambda$ gives uniqueness. Smashing this identity with $S/p$ and
using $p\cdot\Id_{S/p}=0$ from Lemma~\ref{lem:moore} gives
$[C_p/e]=-a_\lambda b_\lambda$.

At $p=2$, the relation $a_\sigma^2\eta_{C_2}=2\cdot a_\sigma$
from~\cite[Section~2.2 and Lemma~4.2]{BXZ}, together with
$a_\sigma\eta_{C_2}=2-[C_p/e]$
(see also~\cite[Section~8]{BC} for this normalization
in the $2$-local setting), gives
\[
 a_{2\sigma}\eta_{C_2}^{\,2}
 =2\cdot a_\sigma\eta_{C_2}=4-2\cdot [C_p/e].
\]
Smashing with $S/4$ and using $4\cdot\Id_{S/4}=0$ from
Lemma~\ref{lem:moore} gives the claimed factorization of $2\cdot [C_p/e]$.
\end{proof}

As a result, if $\ell:S^V\longrightarrow X_p$ has $\delta\ell=0$, choose $w$ with $\ell=iw$.
For odd $p$, Lemma~\ref{lem:euler-divisibility}(1) gives
\[
 f_p\ell=-i[C_p/e]w=ia_\lambda b_\lambda w=0.
\]
For $p=2$, part~(2) gives
\[
 f_2\ell=-i(2\cdot [C_p/e])w=ia_{2\sigma}\eta_{C_2}^{\,2}w=0.
\]
{The induced map is zero, as illustrated in
Figure~\ref{fig:euler-divisibility-cells}.}

Next, suppose that $\ell$ has nonzero image in the top Moore
spectrum $\Sigma S/p$.

\begin{figure}[!htbp]
\centering
\begin{tikzpicture}[x=1cm,y=1cm,font=\small,
 moore/.style={rectangle,minimum width=.65cm,minimum height=.9cm,inner sep=0pt},
 sphere/.style={circle,draw,minimum size=.65cm,inner sep=1pt}]
 \foreach \x/\n in {0/s,2.8/t,10/r} {
  \node[moore] (\n B) at (\x,0) {};
  \node[moore] (\n T) at (\x,1.8) {};
  \draw (\n B.center) ellipse [x radius=.325cm,y radius=.45cm];
  \draw (\n T.center) ellipse [x radius=.325cm,y radius=.45cm];
  \node[above=4pt] at (\n T.north) {$\Sigma S/p$};
  \node[below=4pt] at (\n B.south) {$S^\lambda\wedge S/p$};
  \draw[attachment] (\n T.south) -- node[right=3pt] {$a_\lambda$} (\n B.north);
 }
 \node[sphere] (vl) at (-1.55,1.8) {$S^V$};
 \draw[cellmap] (vl.east) -- node[above=3pt] {$x$} (sT.west);
 \draw[cellmap] (sB.east) -- node[above=3pt] {$-[C_p/e]$} (tB.west);
 \node at (5.1,1.15) {Lemma~\ref{lem:secondary-divisibility}};
 \node at (5.1,.7) {$\Longrightarrow$};
 \node[sphere] (vr) at (7.5,1.8) {$S^V$};
 \draw[cellmap] (vr.south east) -- node[below left=2pt] {$a_\lambda z$} (rB.west);
\end{tikzpicture}
\caption{{Case of a nonzero image in the top Moore spectrum (odd $p$).}}
\label{fig:toda-divisibility-cells}
\end{figure}

\FloatBarrier

\begin{lemma}\label{lem:transfer-extension}
There is an extension
\[
 \bar t:\Cof(a_{\lambda})\longrightarrow S^\lambda,
 \qquad \bar t\,i=[C_p/e],
 \qquad 1-q_{p+1}=-i\bar t.
\]
\end{lemma}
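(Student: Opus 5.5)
Since $\delta(1-q_{p+1})=0$ by Lemma~\ref{lem:cellular-power}, the exact sequence
\[
 [\Cof(a_\lambda),S^0]^{C_p}\xrightarrow{a_\lambda}[\Cof(a_\lambda),S^\lambda]^{C_p}\xrightarrow{i_*}[\Cof(a_\lambda),\Cof(a_\lambda)]^{C_p}\xrightarrow{\delta_*}[\Cof(a_\lambda),S^1]^{C_p}
\]
obtained from \eqref{eq:euler-triangle} produces some $u:\Cof(a_\lambda)\to S^\lambda$ with $1-q_{p+1}=iu$. Put $\bar t=-u$, so $1-q_{p+1}=-i\bar t$. Precomposing with $i$ and using the first relation in \eqref{eq:power-relations} gives
\[
 -i\,\bar t\, i=(1-q_{p+1})i=-i[C_p/e],
\]
so $i(\bar t i-[C_p/e])=0$.

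It remains to arrange $\bar t i=[C_p/e]$ exactly. The difference $\bar t i-[C_p/e]\in[S^\lambda,S^\lambda]^{C_p}\cong A(C_p)$ lies in the kernel of $i_*$. That kernel is the image of $a_\lambda\circ-:[S^\lambda,S^0]^{C_p}\to[S^\lambda,S^\lambda]^{C_p}$. So there is $c\in[S^\lambda,S^0]^{C_p}=\pi_\lambda^{C_p}S$ with $\bar t i-[C_p/e]=a_\lambda c$.

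I will adjust $\bar t$ by replacing it with $\bar t'=\bar t-a_\lambda c\,\delta'$, where we need some $\delta'$ with $\delta' i$ controlled. A more efficient approach is to change $\bar t$ by an element $a_\lambda e$ with $e\in[\Cof(a_\lambda),S^0]^{C_p}$. This does not change $i\bar t$, because $ia_\lambda=0$, and it changes $\bar t i$ by $a_\lambda e i$. So I need $c$ to lie in the image of $i^*:[\Cof(a_\lambda),S^0]\to[S^\lambda,S^0]$. That image is the kernel of $a_\lambda^*:[S^\lambda,S^0]\to[S^0,S^0]$. Hence I must check that $c a_\lambda=0$ in $A(C_p)$. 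Equivalently, I must check that $a_\lambda c\in A(C_p)$ has vanishing image in $[S^0,S^\lambda]\cdots$, or rather use commutativity: $c\,a_\lambda=a_\lambda c$ as elements of $\pi_0^{C_p}S$.

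This is the main obstacle, so I would compute $a_\lambda c=\bar t i-[C_p/e]$ via marks instead.
\begin{itemize}
\item \emph{Fixed points.} On $C_p$-geometric fixed points, $i$ and $\Phi^{C_p}\Cof(a_\lambda)$ vanish, because $a_\lambda$ becomes an equivalence. So the fixed-point mark of $\bar t i$ is $0$, which is also the fixed-point mark of $[C_p/e]$.
\item \emph{Underlying level.} By \eqref{eq:underlying-power}, $\Phi^e(1-q_{p+1})$ is $-p$ on the top cell $S^2$ and $0$ on the bottom cell. Also $\Phi^e(i)$ is the inclusion of the top cell, and $\Phi^e(\bar t)$ is a map $\Sigma S\vee\Sigma^2S\to S^2$. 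Writing $\alpha$ for its degree on $S^2$, the relation $-i\bar t=1-q_{p+1}$ gives $-\alpha=-p$, hence $\alpha=p$. This is the underlying mark of $[C_p/e]$.
\end{itemize}
By injectivity of marks, $\bar t i=[C_p/e]$ already holds, so $c=0$ and no adjustment is needed.

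One point needs care: $\Phi^e(i)$ must be the inclusion of the top cell with degree $1$ and no $\eta$ term. This holds since the underlying $a_\lambda$ is null. Any $\pi_1$ ambiguity in $\Phi^e(\bar t)$ on $\Sigma S$ does not affect the composite with $i$.
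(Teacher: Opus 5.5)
Your proof is correct and is essentially the paper's. Both obtain $\bar t$ from exactness via $\delta(1-q_{p+1})=0$ and then verify $\bar t i=[C_p/e]$ by injectivity of the mark homomorphism; the only difference is that the paper computes the marks of the difference $\bar t i-[C_p/e]=a_\lambda c$ (underlying degree zero because $a_\lambda$ is underlying null, fixed-point degree zero from $(\bar t i-[C_p/e])a_\lambda=0$ and $\Phi^{C_p}(a_\lambda)=1$), while you compute the marks of $\bar t i$ directly from $\Phi^{C_p}\Cof(a_\lambda)\simeq 0$ and the underlying matrix of $1-q_{p+1}$.

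The adjustment digression is unnecessary, and two slips are harmless: the $\Sigma S$-component of $\Phi^e(\bar t)$ lies in $\pi_{-1}S=0$, so there is no ambiguity at all, and your argument gives $a_\lambda c=0$ rather than $c=0$.
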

\begin{proof}
By Lemma~\ref{lem:cellular-power},
$\delta(1-q_{p+1})=0$, so exactness gives
$\bar t:\Cof(a_\lambda)\to S^\lambda$ with $-i\bar t=1-q_{p+1}$.
The middle square gives $i(\bar t i-[C_p/e])=0$, so exactness
shows that $\bar t i-[C_p/e]$ factors through $a_\lambda$ and has underlying
degree zero. Also $(\bar t i-[C_p/e])a_\lambda=0$. Since
$\Phi^{C_p}(a_\lambda)=1$, its $C_p$-fixed-point degree is zero.
Injectivity of the mark homomorphism therefore gives $\bar t i=[C_p/e]$.
\end{proof}

Smash the extension in Lemma~\ref{lem:transfer-extension} with $S/p$
at odd primes and with $S/4$ at two. We fix the resulting maps
\[
 \bar t:X_p\longrightarrow S^\lambda\wedge S/p,
 \qquad
 \bar t_2:X_2\longrightarrow S^{2\sigma}\wedge S/4,
\]
which satisfy
\[
 \bar t i=[C_p/e],\qquad f_p=-i\bar t,
 \qquad
 \bar t_2i=[C_p/e],\qquad f_2=-i(2\cdot\bar t_2).
\]
For odd $p$ and $x\in\pi_{V-1}^{C_p}(S/p)$ with $a_\lambda x=0$,
define the restricted Toda bracket~\cite[Definition~3.7]{CF}:
\[
 \langle [C_p/e],a_\lambda,x\rangle_{\bar t}
 :=\left\{
 \bar t\circ\widetilde x
 \;\middle|\;
 \begin{aligned}
 &\widetilde x:S^V\longrightarrow X_p,\\
 &\delta\circ\widetilde x=\Sigma x
 \end{aligned}
 \right\}.
\]
Here the extension $\bar t$ is fixed, while the lift $\widetilde x$
varies. At $p=2$, for $x\in\pi_{V-1}^{C_2}(S/4)$ with $a_{2\sigma}x=0$,
we similarly define
\[
 \langle 2\cdot [C_p/e],a_{2\sigma},x\rangle_{2\cdot\bar t_2}
 :=\left\{
 (2\cdot\bar t_2)\circ\widetilde x
 \;\middle|\;
 \begin{aligned}
 &\widetilde x:S^V\longrightarrow X_2,\\
 &\delta\circ\widetilde x=\Sigma x
 \end{aligned}
 \right\}.
\]

{The following divisibility statement is a consequence of
Proposition~\ref{prop:twisted-nullity}, whose proof appears
in Section~\ref{sec:prime-order-proofs}.}

\Needspace{8\baselineskip}
\begin{lemma}\label{lem:secondary-divisibility}
\leavevmode
\begin{enumerate}
\item Let $p$ be odd. For every $V\in\RO(C_p)$ and
$x\in\pi_{V-1}^{C_p}(S/p)$ with $a_{\lambda}x=0$,
\[
 \langle [C_p/e],a_{\lambda},x\rangle_{\bar t}\subseteq a_{\lambda}\cdot\pi_V^{C_p}(S/p).
\]
\item Let $p=2$. For every $V\in\RO(C_2)$ and
$x\in\pi_{V-1}^{C_2}(S/4)$ with $a_{2\sigma}x=0$,
\[
 \langle 2\cdot [C_p/e],a_{2\sigma},x\rangle_{2\cdot\bar t_2}
 \subseteq a_{2\sigma}\cdot\pi_V^{C_2}(S/4).
\]
\end{enumerate}
\end{lemma}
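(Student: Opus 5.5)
The paper says this lemma is a consequence of Proposition~\ref{prop:twisted-nullity}, so I would deduce it directly from the nullity of $f_p$ on $\pi_V^{C_p}$. I treat odd $p$; the case $p=2$ is the same argument with $S/4$, $a_{2\sigma}$, $2\cdot\bar t_2$ and $f_2=-i(2\cdot\bar t_2)$.

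Fix $x$ with $a_\lambda x=0$ and a lift $\widetilde x:S^V\to X_p$ with $\delta\widetilde x=\Sigma x$. Such a lift exists by exactness of
\[
 S/p\xrightarrow{a_\lambda}S^\lambda\wedge S/p\xrightarrow{i}X_p\xrightarrow{\delta}\Sigma S/p,
\]
which is a cofiber sequence, and whose long exact sequence in $\pi_V^{C_p}$ reads, after the suspension shift,
\[
 \pi_{V-1}^{C_p}(S/p)\xrightarrow{a_\lambda}\pi_{V-1}^{C_p}(S^\lambda\wedge S/p)\to\cdots,
 \qquad
 \pi_V^{C_p}(S/p)\xrightarrow{a_\lambda}\pi_V^{C_p}(S^\lambda\wedge S/p)\xrightarrow{i_*}\pi_V^{C_p}(X_p)\xrightarrow{\delta_*}\pi_{V}^{C_p}(\Sigma S/p).
\]
The element to control is $y:=\bar t\widetilde x\in\pi_V^{C_p}(S^\lambda\wedge S/p)$, and the goal is $y\in a_\lambda\cdot\pi_V^{C_p}(S/p)$.

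By Proposition~\ref{prop:twisted-nullity}, applied with $H=C_p$ and taking $\pi_0$, we have $\pi_V^{C_p}(f_p)=0$, so $f_p\widetilde x=0$. Lemma~\ref{lem:transfer-extension} gives $f_p=-i\bar t$, so $i_*(y)=i\bar t\widetilde x=-f_p\widetilde x=0$. Exactness at $\pi_V^{C_p}(S^\lambda\wedge S/p)$ then says $y$ lies in the image of $a_\lambda:\pi_V^{C_p}(S/p)\to\pi_V^{C_p}(S^\lambda\wedge S/p)$, which is the claim. Since $\widetilde x$ was an arbitrary lift, every element of the restricted bracket lies in $a_\lambda\cdot\pi_V^{C_p}(S/p)$.

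For $p=2$, the element is $y=(2\cdot\bar t_2)\widetilde x$. We have $f_2\widetilde x=0$ by Proposition~\ref{prop:twisted-nullity}(2), and $f_2=-i(2\cdot\bar t_2)$, so again $i_*y=0$ and $y\in a_{2\sigma}\cdot\pi_V^{C_2}(S/4)$.

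The only point I expect to need care is bookkeeping. First, one must check that the indexing of $\pi_{V-1}$ versus $\Sigma x$ matches the orientation of the connecting map, so that the lift exists with $\delta\widetilde x=\Sigma x$. Second, the image of $a_\lambda$ must be identified correctly: $a_\lambda\cdot\pi_V^{C_p}(S/p)$ means postcomposition with $a_\lambda\wedge S/p$. There is no real obstacle beyond this, because all the difficulty lives in Proposition~\ref{prop:twisted-nullity}. Its proof must of course not use this lemma, which presents it as a consequence rather than an input.
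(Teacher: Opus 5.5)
Your proposal is correct and is essentially the paper's own proof. The paper also uses Proposition~\ref{prop:twisted-nullity} together with $f_p=-i\bar t$ to get $i\bar t\widetilde x=0$, and then applies exactness of $[S^V,-]^{C_p}$ on the Euler cofiber sequence smashed with $S/p$ (or with $S/4$ when $p=2$) to place $\bar t\widetilde x$ in the image of $a_\lambda$. The existence of a lift that you check is not needed, since the inclusion is trivial when the bracket is empty.
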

\begin{proof}
For any lift
$\widetilde x:S^V\longrightarrow X_p$, Proposition~\ref{prop:twisted-nullity}
and the chosen extensions give
\[
 i\bar t\widetilde x=-f_p\widetilde x=0\quad(p\text{ odd}),
 \qquad i(2\cdot\bar t_2)\widetilde x=-f_2\widetilde x=0\quad(p=2).
\]
For odd $p$, smash the Euler cofiber sequence~\eqref{eq:euler-triangle}
with $S/p$ and apply $[S^V,-]^{C_p}$. The resulting exact sequence contains
\[
 \pi_V^{C_p}(S/p)\xrightarrow{a_\lambda\cdot}
 [S^V,S^\lambda\wedge S/p]^{C_p}
 \xrightarrow{i_*}[S^V,X_p]^{C_p}.
\]
Since $i\bar t\widetilde x=0$, the class $\bar t\widetilde x$
lies in $\ker(i_*)=\operatorname{im}(a_\lambda\cdot)$.
We can write $\bar t\widetilde x=a_\lambda z$ for some
$z\in\pi_V^{C_p}(S/p)$. At $p=2$, the same argument using $S/4$
and $a_{2\sigma}$ gives $2\cdot\bar t_2\widetilde x=a_{2\sigma}z$ for some
$z\in\pi_V^{C_2}(S/4)$.
\end{proof}

{Now suppose $\ell:S^V\longrightarrow X_p$ with}
$x=\Sigma^{-1}\delta\ell\neq 0$. For odd $p$, Lemma~\ref{lem:secondary-divisibility}(1)
gives $\bar t\ell=a_\lambda z$ for some $z\in\pi_V^{C_p}(S/p)$,
so
\[
 f_p\ell=-i\bar t\ell=-ia_\lambda z=0.
\]
For $p=2$, part~(2) gives $2\cdot\bar t_2\ell=a_{2\sigma}z$ for some
$z\in\pi_V^{C_2}(S/4)$, and
\[
 f_2\ell=-i(2\cdot\bar t_2\ell)=-ia_{2\sigma}z=0.
\]
{These two cases express the vanishing of $f_p$ on $\pi_V^{C_p}$
in terms of Euler divisibility.
Figure~\ref{fig:toda-divisibility-cells} records the second case.}

The map $f_p$ itself is nontrivial. For odd $p$, smashing with
$H\F_p$ in the $C_p$-Borel category splits the Moore factor as
$H\F_p\wedge S/p\simeq H\F_p\vee\Sigma H\F_p$.
The resulting map on equivariant chains is nonzero in
$\D(\F_p[C_p])$,
even though its map on ordinary homology is zero.
For $p=2$, the analogous calculation uses $H\Z/4$.

\FloatBarrier

\section{Representation-twisted fixed points and nonvanishing}\label{sec:prime-order-proofs}

\subsection{Representation-twisted fixed points}\label{sec:prelim}

We prove Proposition~\ref{prop:twisted-nullity} in this subsection. We start by recalling the following classical results.

\begin{lemma}\label{lem:moore}
\leavevmode
\begin{enumerate}
\item For $p$ an odd prime, we have $\pi_1(S/p)=\pi_2(S/p)=0$, $[\Sigma S/p, S/p]=0$, and
\[
 p\cdot\Id_{S/p}=0.
\]
\item For $p=2$, we have \[4\cdot\Id_{S/4}=0.\]
\end{enumerate}
\end{lemma}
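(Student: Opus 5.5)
The plan is to read everything off the cofiber sequence $S\xrightarrow{r}S\xrightarrow{i}S/r\xrightarrow{\delta}S^1$ and its long exact sequences, using only $\pi_0S=\Z$, $\pi_1S=\Z/2\{\eta\}$ and $\pi_2S=\Z/2\{\eta^2\}$.

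\emph{Homotopy of $S/p$ for odd $p$.} The long exact sequence gives short exact sequences
\[
 0\to\pi_nS/p\,\pi_nS\to\pi_n(S/p)\to{}_p\pi_{n-1}S\to0 .
\]
For $n=1$ the left term is $\Z/2\otimes\Z/p=0$, and the right term is the $p$-torsion of $\Z$, which is $0$. For $n=2$ the left term is $\Z/2\otimes\Z/p=0$, and the right term is the $p$-torsion of $\Z/2$, which is $0$. Hence $\pi_1(S/p)=\pi_2(S/p)=0$.

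\emph{The group $[\Sigma S/p,S/p]$.} Apply $[-,S/p]$ to the suspended cofiber sequence $S^1\to S^1\to\Sigma S/p\to S^2$. This gives an exact segment
\[
 \pi_2(S/p)\to[\Sigma S/p,S/p]\to\pi_1(S/p).
\]
Both outer groups vanish, so $[\Sigma S/p,S/p]=0$.

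\emph{The relation $p\cdot\Id_{S/p}=0$.} Restricted to the bottom cell, $(p\cdot\Id)\circ i=i\circ p=0$, because $i\circ p$ is two consecutive maps in a cofiber sequence. By exactness of $[-,S/p]$ applied to the cofiber sequence, $p\cdot\Id=a\circ\delta$ for some $a\in[S^1,S/p]=\pi_1(S/p)$. This group is $0$, so $p\cdot\Id=0$.

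\emph{The case $p=2$.} The same argument gives $4\cdot\Id_{S/4}=a\circ\delta$ with $a\in\pi_1(S/4)$. The exact sequence computes $\pi_1(S/4)=\pi_1S/4\pi_1S\cong\Z/2\{i\eta\}$, since the $4$-torsion of $\pi_0S$ is $0$. So the only possible obstruction is $i\eta\delta$, and the work is to show that $a=0$, not $i\eta$. This is the step I expect to be the main obstacle, because it is where $S/2$ and $S/4$ genuinely differ: $2\cdot\Id_{S/2}=i\eta\delta\neq0$.

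To settle it, I would use the classical formula
\[
 n\cdot\Id_{S/n}=\binom n2\, i\eta\delta .
\]
I would justify it by computing the Hopf invariant (the $\eta$-component) of the degree-$n$ self-map on the top cell of the two-cell complex via the Hopf construction. Equivalently, compare with the coefficient of $\eta$ in $n\cdot\Id_{\Sigma^\infty\mathbb{RP}^2}$, where for $n=2$ one recovers the standard $2\cdot\Id_{S/2}=i\eta\delta$. For $n=4$ the coefficient is $\binom42=6$, which is even, and $i\eta\delta$ has order $2$; hence $4\cdot\Id_{S/4}=0$.

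If the Hopf-construction bookkeeping proves awkward, I would fall back on the octahedral cofiber sequence $S/2\to S/4\to S/2$ coming from $4=2\cdot2$. I would then use the fact that multiplication by $2$ on $S/4$ factors through this sequence, so that $4\cdot\Id$ is a composite containing $\eta$ twice or containing $2\eta=0$. Alternatively, this well-known exponent statement can simply be cited as classical.
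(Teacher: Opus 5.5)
Your computation of $\pi_1(S/p)$, $\pi_2(S/p)$ and $[\Sigma S/p,S/p]$ uses the same long-exact-sequence argument as the paper. You differ on the two exponent statements. The paper cites Oka for both $p\cdot\Id_{S/p}=0$ and $4\cdot\Id_{S/4}=0$. You instead prove the odd case directly: writing $p\cdot\Id_{S/p}=a\circ\delta$ with $a\in\pi_1(S/p)=0$ makes part~(1) self-contained from the groups you just computed. For $S/4$ you correctly reduce the question to the single class $i\eta\delta$.

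For that last step, your Hopf-construction justification is too vague to check, and the comparison with $\Sigma^\infty\mathbb{RP}^2$ only recovers $n=2$. Citing the classical formula puts you exactly where the paper is.

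Your octahedral fallback does work, but it needs to be stated precisely.
\begin{itemize}
\item Take the octahedral maps $a\colon S/2\to S/4$ and $b\colon S/4\to S/2$. They satisfy $ab\,i=2i$ and $\delta\,ab=2\delta$.
\item Hence $2\cdot\Id_{S/4}-ab$ vanishes on $i$, so it factors through $\delta$ via an element of $\pi_1(S/4)=\{0,i\eta\}$. This gives $2\cdot\Id_{S/4}=ab+\epsilon\,i\eta\delta$ with $\epsilon\in\{0,1\}$.
\item Squaring, $abab=0$ because $ba=0$.
\item Each cross term is a multiple of $2\,i\eta\delta=0$.
\item Finally $(i\eta\delta)^2=0$ because $\delta i=0$. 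It is not because of $\eta^2$, which is nonzero stably.
\end{itemize}

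A cleaner alternative: $S/4\wedge S/4$ is the cofiber of $4\cdot\Id_{S/4}$. The Cartan formula gives $Sq^2=Sq^1\otimes Sq^1=0$ from its bottom class to its top class. By contrast, the cofiber of $i\eta\delta$ has nonzero $Sq^2$ between those classes, so $4\cdot\Id_{S/4}\neq i\eta\delta$.

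Your route gives a proof independent of the external reference; the paper's citation is shorter.
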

\begin{proof}
In the exact sequence
\[
 \pi_2S\xrightarrow{p}\pi_2S\to\pi_2(S/p) \to \pi_1S\xrightarrow{p}\pi_1S\to\pi_1(S/p)
 \to\pi_0S\xrightarrow{p}\pi_0S,
\]
since $\pi_1S=\pi_2S=\Z/2$ and $p$ is odd, multiplication by $p$ is an isomorphism on $\pi_1$ and $\pi_2$. Since multiplication by $p$ is injective, we have $\pi_2(S/p)=\pi_1(S/p)=0$. The long exact sequence
\[\pi_2(S/p)\to \pi_2(S/p) \to [\Sigma S/p, S/p]\to \pi_1(S/p)\to \pi_1(S/p)\]
then implies $[\Sigma S/p, S/p]=0$.
For the order of the identity of Moore spectra, see \cite[p.~257]{Oka} for example.
\end{proof}

For $C_p=\langle g\rangle$ and $V\in\RO(C_p)$,
we first compute the induced map of $g$ {after twisting by a representation sphere}.

\begin{lemma}\label{lem:free-twist}
For $V\in\RO(C_p)$, there is an equivalence
\begin{equation}\label{eq:free-twist}
 \bigl(S^{-V}\wedge(C_p)_+\bigr)^{C_p}\simeq S^{-|V|}.
\end{equation}
Under this equivalence, right translation by $g$ on $(C_p)_+$
corresponds to a nonequivariant map of spheres
with the following degree:
\begin{enumerate}
\item For odd $p$, the degree is $1$.
\item For $p=2$, let $V=a+b\sigma$. Then the degree is $(-1)^b$.
\end{enumerate}
\end{lemma}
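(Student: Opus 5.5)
The plan is to use the Wirthmüller isomorphism (induction--restriction adjunction) to identify the free orbit, and then to track the action of $g$ through that identification. Write $S^{-V}\wedge (C_p)_+\simeq (C_p)_+\wedge S^{-\Res^{C_p}_e V}$ by untwisting. The untwisting map is the shearing homeomorphism $(h,v)\mapsto (h,h^{-1}v)$, i.e. $C_p\times_e \Res V\cong C_p\times V$. Taking $C_p$-fixed points of the induced spectrum gives
\[
\bigl((C_p)_+\wedge S^{-|V|}\bigr)^{C_p}\simeq S^{-|V|},
\]
because fixed points of $\Ind_e^{C_p}$ of a spectrum recover the underlying spectrum. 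This is the Wirthmüller isomorphism, and it is where $(C_p)_+$ being finite and free is used.

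The key step is to transport right translation $r_g$ through the shearing. On $C_p\times V$ with diagonal action, the map $r_g\times\Id$ becomes, after shearing $(h,v)\mapsto(h,h^{-1}v)$, the map
\[
(h,w)\mapsto (hg,\ g^{-1}w).
\]
On the induced spectrum $(C_p)_+\wedge S^{-|V|}$, the fixed points are computed by restricting to the summand at $e\in C_p$. The map above sends the summand at $h$ to the summand at $hg$ while acting by $g^{-1}$ on the sphere coordinate. Under the fixed-point identification, which is restriction to the $e$-summand with the equivariance structure transporting between summands, the self-map of $S^{-|V|}$ obtained is therefore the action of $g^{\pm1}$ on the underlying sphere $S^{-\Res V}$. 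So the degree is the degree of $g$ acting on $S^{V}$ nonequivariantly, which is the determinant sign of $g$ on the virtual representation $V$.

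It remains to compute these determinants. For odd $p$, every element of odd order acts on a real representation with determinant $+1$: $\det(g)^p=1$ and $\det(g)=\pm1$ force $\det(g)=1$. Equivalently, $\RO(C_p)$ is spanned by $1$ and the rotation representations $\lambda^k$, on which $g$ acts by rotations of degree $1$; this gives (1). For $p=2$ with $V=a+b\sigma$, the generator acts trivially on $a$ and by $-1$ on each copy of $\sigma$, giving degree $(-1)^b$. Virtual representations are handled by additivity of degree under smash products, together with the fact that the degree of $S^{-W}$ is the inverse of that of $S^{W}$, which is the same sign; this gives (2).

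I expect the main obstacle to be the bookkeeping in the second paragraph. One has to check that the Wirthmüller/fixed-point identification intertwines right translation with the underlying action of $g$ (or $g^{-1}$, which has the same degree) on the sphere coordinate, and that no extra sign arises from orientation conventions. To keep this under control, I would carry out the check at the space level for genuine representations $V$, using the explicit shearing homeomorphism. I would then extend to virtual $V$ by smashing with $S^{W}$ and desuspending, which is harmless since the degrees are just signs.
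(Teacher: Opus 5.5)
Your proof is correct, but it takes a somewhat different route from the paper's.

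\textbf{The identification.} The paper identifies $(S^{-V}\wedge(C_p)_+)^{C_p}$ with the orbit spectrum $(S^{-V}\wedge(C_p)_+)/C_p\simeq S^{-|V|}$ via the Adams isomorphism. You instead untwist and apply the Wirthm\"uller isomorphism.

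\textbf{Odd $p$.} The paper never identifies the induced map. Right translation by $g$ has $p$-th power equal to the identity, so its degree $d$ on $S^{-|V|}$ satisfies $d^p=1$. This forces $d=1$ for odd $p$ and $d=\pm1$ for $p=2$.

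\textbf{The case $p=2$.} Only here does the paper read off $(-1)^b$ from the antipodal action on $\sigma$. In doing so it tacitly assumes that right translation corresponds to the $g$-action on the underlying sphere. Your shearing computation $(h,w)\mapsto(hg,g^{-1}w)$ proves exactly this: right translation becomes $g^{-1}$ on $S^{-\Res V}$, uniformly in $p$. So you supply the step the paper leaves implicit at $p=2$, and the answer $\det(g|_V)$ then covers both cases at once.

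\textbf{Trade-off.} The price of your route is the bookkeeping you flagged. For odd $p$ the paper's order argument is shorter and does not depend on conventions. In the orbit model, your bookkeeping also collapses to one line. For an actual representation, $[v,h]\mapsto h^{-1}v$ identifies $(S^V\wedge(C_p)_+)/C_p$ with $S^V$, and right translation $[v,h]\mapsto[v,hg]$ becomes $v\mapsto g^{-1}v$.

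\textbf{Small slip.} Degrees are multiplicative, not additive, under smash products. What you need is that $V\mapsto\det(g|_V)$ is a homomorphism from the additive group $\RO(C_p)$ to $\{\pm1\}$.
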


\begin{proof}
{Since $S^{-V}\wedge (C_p)_+$ is a free $C_p$-spectrum,
\eqref{eq:free-twist} follows from the Adams isomorphism:}
\[
 (S^{-V}\wedge (C_p)_+)^{C_p}
 \simeq (S^{-V}\wedge (C_p)_+)/C_p
 \simeq S^{-|V|}.
\]

Since $g^p=e$, the degree $d$ of the action of $g$ satisfies $d^p=1$. In particular, $d$ must be $\pm 1$. If $p$ is odd, this forces $d=1$. For $p=2$ and $V=a+b\sigma$, the action on each trivial summand
has degree $1$, whereas the action on $\sigma$ is the antipodal action and has degree $-1$. Its degree on $S^{-V}$ is therefore $(-1)^b$.
\end{proof}

For every $C_p$-spectrum $Z$, there is a natural equivalence
\begin{equation}\label{eq:moore-fixed}
 (S/r\wedge Z)^{C_p}\simeq S/r\wedge Z^{C_p},
\end{equation}
since smash-product and taking $C_p$-fixed points both preserve cofiber sequences.

\Needspace{5\baselineskip}

\begin{proof}[Proof of Proposition~\ref{prop:twisted-nullity}]
For $H=e$, by \eqref{eq:underlying-power}, the underlying
map $1-q_{p+1}$ on $\Cof(a_\lambda)\simeq\Sigma S\vee\Sigma^2S$ can be identified with the matrix
\[
 (1-q_{p+1})^e=\begin{pmatrix}0&0\\0&-p\end{pmatrix}: \Sigma S\vee\Sigma^2S\to \Sigma S\vee\Sigma^2S.
\]
For odd $p$, smashing with $S/p$ therefore identifies $(f_p)^e$ with
\[
 \begin{pmatrix}0&0\\0&-p\cdot\Id_{\Sigma^2S/p}\end{pmatrix}.
\]
By~Lemma~\ref{lem:moore}(1), $p\cdot\Id_{S/p}=0$, so $f_p$ is null nonequivariantly. For $p=2$, the map is \[(f_2)^e=(2(1-q_3))^e=\begin{pmatrix}0&0\\0&-4\cdot \Id_{\Sigma^2 S/4}\end{pmatrix}\] 
which is zero by~Lemma~\ref{lem:moore}(2). Therefore, smashing with $S^{-|V|}$ is null as well.

For $H=C_p$, by~\eqref{eq:moore-fixed}, we may consider the map on
fixed points before smashing with the Moore spectrum.
\begin{enumerate}
\item Suppose $p$ is odd. Applying Lemma~\ref{lem:free-twist}(1)
to the cofiber sequence in Lemma~\ref{lem:free-cell-power}, we have $(S^{-V}\wedge \Cof(a_\lambda))^{C_p}\simeq S^{1-|V|}\vee S^{2-|V|}$. Since each right translation $g^i$ has degree one, the norm $N$ acts by $p$.
Lemma~\ref{lem:free-cell-power} now gives
\begin{equation}\label{eq:twist-matrix}
 \bigl(S^{-V}\wedge(1-q_{p+1})\bigr)^{C_p}
 =\begin{pmatrix}0&\beta\\0&-p\end{pmatrix},
\end{equation}
for some $\beta\in \pi_1 S=\mathbb Z/2$. After smashing with $S/p$, multiplication by $-p$ is zero by
Lemma~\ref{lem:moore}. As $[\Sigma S/p, S/p]=0$ in Lemma~\ref{lem:moore}, $\beta \wedge \Id_{S/p}$ is null as well.

\item Let $p=2$ and $V=a+b\sigma$. If $b$ is even,
Lemma~\ref{lem:free-twist}(2) and~Lemma~\ref{lem:free-cell-power}
give the same matrix form as~\eqref{eq:twist-matrix}. Then $2\beta=0$ plus $4\cdot\Id_{S/4}=0$ implies that $(S^{-V} \wedge f_2)^{C_2}$ is null.

If $b$ is odd, Lemma~\ref{lem:free-twist}(2) identifies\[ (S^{-V}\wedge \Cof(a_{2\sigma}))^{C_2} \simeq \Sigma^{1-|V|}S/2.\] By Lemma~\ref{lem:free-cell-power}, $1-q_3$ vanishes on the bottom cell, so it factors as
\[
 \Sigma^{1-|V|}S/2\xrightarrow{\partial}S^{2-|V|}
 \xrightarrow{h}\Sigma^{1-|V|}S/2.
\]
Since $h\in \pi_1(S/2)=\Z/2$, $2h=0$. Therefore $2(1-q_3)$ is null, even before smashing with $S/4$.\qedhere
\end{enumerate}
\end{proof}

\FloatBarrier

\subsection{Nonvanishing and chains with $C_p$-action}\label{sec:chains}\label{subsec:algebraic}

In this subsection, we prove Proposition~\ref{lem:chains}.

\Needspace{5\baselineskip}

\begin{proof}[Proof of Proposition~\ref{lem:chains}]
In each case, $C$ is a bounded complex of free modules, so $\tau$ is zero in the derived category exactly when it is chain null-homotopic.
\begin{enumerate}
\item Suppose $p$ is odd, and let $C$ and $\tau$ be as in Proposition~\ref{lem:chains}(1).
We use $\F_p$-chains with $C_p$-action to represent
$H\F_p\wedge X_p$ in $\D(\F_p[C_p])$
\cite[Theorem~5.1.6]{SS}. The free-cell sequence in Lemma~\ref{lem:free-cell-power}
identifies the chains of $\Cof(a_\lambda)$ with $\Sigma C$.
The two-cell complex of $S/p$ has zero differential over $\F_p$ and
trivial $C_p$-action. Tensoring therefore represents
$H\F_p\wedge X_p$ by $\Sigma C\oplus\Sigma^2C$.
Since $q_{p+1}$ acts only on the circle factor,
Lemma~\ref{lem:free-cell-power} represents $H\F_p\wedge f_p$ by
$\Sigma\tau\oplus\Sigma^2\tau$.

\[
 \begin{tikzcd}[column sep=large,row sep=large]
 0 \arrow[r] &
 \mathbb F_p[C_p] \arrow[r,"1-g"] \arrow[d,"\tau_1"'] &
 \mathbb F_p[C_p] \arrow[r] \arrow[d,"\tau_0=0"] \arrow[dl,dashed,"h" description] &
 0 \\
 0 \arrow[r] &
 \mathbb F_p[C_p] \arrow[r,"1-g"'] &
 \mathbb F_p[C_p] \arrow[r] & 0.
 \end{tikzcd}
\]

A chain nullhomotopy of $\tau$ would have a single component
$h:C_0\longrightarrow C_1$, given by multiplication by an element
$h\in\F_p[C_p]$ satisfying
\[
 (1-g)h=0,\qquad h(1-g)=-(1+g+\cdots+g^{p-1}).
\]
The left sides agree by commutativity, but
$1+g+\cdots+g^{p-1}\ne0$. Therefore $\tau$ is nonzero in
$\D(\F_p[C_p])$, and $f_p$ remains nonzero in the Borel category.

\item Let $p=2$, and let $C$ and $\tau$ be as in Proposition~\ref{lem:chains}(2).
We use $\Z/4$-chains with $C_2$-action to represent
$H\Z/4\wedge X_2$ in $\D(\Z/4[C_2])$.
The chains of $\Cof(a_{2\sigma})$ are $\Sigma C$.
The two-cell complex of $S/4$ has zero differential over $\Z/4$ and
trivial $C_2$-action, so tensoring represents $H\Z/4\wedge X_2$
by $\Sigma C\oplus\Sigma^2C$.
The map $2(1-q_3)$ acts only on the circle factor, so
Lemma~\ref{lem:free-cell-power} represents $H\Z/4\wedge f_2$ by
$\Sigma\tau\oplus\Sigma^2\tau$.

\[
 \begin{tikzcd}[column sep=large,row sep=large]
 0 \arrow[r] &
 \mathbb Z/4[C_2] \arrow[r,"1-g"] \arrow[d,"\tau_1"'] &
 \mathbb Z/4[C_2] \arrow[r] \arrow[d,"\tau_0=0"] \arrow[dl,dashed,"h" description] &
 0 \\
 0 \arrow[r] &
 \mathbb Z/4[C_2] \arrow[r,"1-g"'] &
 \mathbb Z/4[C_2] \arrow[r] & 0.
 \end{tikzcd}
\]

A chain nullhomotopy would require an element $h\in\Z/4[C_2]$
satisfying
\[
 (1-g)h=0,\qquad h(1-g)=-2(1+g).
\]
The left sides agree, but $-2(1+g)\ne0$ in $\Z/4[C_2]$.
Therefore $\tau$ is nonzero in $\D(\Z/4[C_2])$, and $f_2$
remains nonzero in the Borel category.
\end{enumerate}
In both cases, the underlying map is zero by
Proposition~\ref{prop:twisted-nullity} with $H=e$, so the induced map
on ordinary homology is zero.
\end{proof}

\FloatBarrier

\section{A four-cell ghost for all finite groups}\label{sec:all-groups}

In this section, we finish the proof of Theorem~\ref{thm:main}
for all nontrivial finite groups.

\begin{lemma}\label{lem:induction}
Let $G$ be a finite group, let $H\leq G$, and let $f:X\to Y$ be a map
of $H$-spectra. The induction functor $\Ind_H^G(-)$ has the following
properties.
\begin{enumerate}
\item If
\[
 \bigl(S^{-V}\wedge\Res_K^H f\bigr)^K=0
 \qquad(K\leq H,\ V\in\RO(K)),
\]
then $\Ind_H^G f$ has the same vanishing property at every subgroup
of $G$.
\item {Induction preserves ghosts for each of the three detector families
in Conjecture~\ref{conj:equivariant}.}
\item If $f\ne0$, then $\Ind_H^G f\ne0$.
\item If $X=Y$, then for every $n\geq1$,
\[
 (\Ind_H^G f)^n=0\quad\Longleftrightarrow\quad f^n=0.
\]
\item The maps $f$ and $\Ind_H^G f$ have the same additive order.
\end{enumerate}
\end{lemma}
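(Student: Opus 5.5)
The plan is to derive all five properties from three standard facts about $\Ind_H^G=G_+\wedge_H(-)$: the Wirthmüller isomorphism $\Ind_H^G\simeq\operatorname{Coind}_H^G$, which makes induction both left and right adjoint to restriction; the double coset formula
\[
\Res_K^G\Ind_H^G X\simeq\bigvee_{KgH}\Ind_{K\cap {}^gH}^K\, c_g^*\Res^{H}_{K^g\cap H}X;
\]
and the fact that $\Res_H^G\Ind_H^G X$ contains $X$ as a natural retract, namely the summand of the identity double coset.

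For (1), fix $K\leq G$ and $V\in\RO(K)$. Using the double coset formula together with $S^{-V}\wedge\Ind_L^K Z\simeq\Ind_L^K(S^{-\Res V}\wedge Z)$, I would rewrite $(S^{-V}\wedge\Res_K^G\Ind_H^G f)^K$ as a wedge of the maps $(\Ind_L^K(S^{-\Res_L V}\wedge f'))^K$, where $L=K\cap{}^gH$ and $f'$ is the conjugate $c_g^*\Res f$. For any $L$-spectrum $Z$ we have $(\Ind_L^K Z)^K\simeq Z^L$, naturally in $Z$; this follows from Wirthmüller and the fact that fixed points are the right adjoint of inflation. So each summand is $(S^{-W}\wedge\Res f)^{L'}$ for a subgroup $L'\leq H$ and some $W\in\RO(L')$, after transporting along conjugation by $g$, which carries representations and subgroups of ${}^gH$ to those of $H$. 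Each such summand is null by hypothesis, and a finite wedge of null maps is null, which gives (1).

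For (2), apply the induction/restriction adjunction in the form $[G_+\wedge_K S^W,\Ind_H^G X]^G\cong[S^W,\Res_K^G\Ind_H^G X]^K$. Decomposing with the double coset formula and the adjunction once more reduces the vanishing for $\Ind f$ to vanishing on maps $\Ind_{L}^{K}(\ldots)$ from detectors of the form $S^{W'}$ into $\Res f$ at subgroups of $H$. For the integer and $\RO$ families one must check that the detectors produced lie in the corresponding family for $H$. Restricting an $\RO(G)$-grading gives an $\RO(H)$-grading, and restricting an integer grading gives an integer grading, so this holds, though it needs some care. Alternatively, I would phrase (2) by noting that $\Res_H^G$ of each $G$-detector family lies in the thick closure of the $H$-detectors of the same type, and then use the left-adjoint form $[P,\Ind X]^G\cong[\Res P,X]^H$ coming from Wirthmüller. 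The second route is cleaner, and it is the one I would take.

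For (3)–(5), use the natural retraction $X\to\Res_H^G\Ind_H^G X\to X$. Since it is natural, it intertwines $f$ with $\Res\Ind f$. Hence $f=r\circ\Res\Ind f\circ s$, and it follows that $\Ind f=0$ implies $f=0$, which is (3). Likewise $(\Ind f)^n=\Ind(f^n)$ by functoriality, and $\Ind$ is additive; applying (3) to $f^n$ and to $k f$ gives both directions of (4) and (5), since the converse implications are trivial because $\Ind$ is an additive functor. The main obstacle is (1): keeping track of conjugations and making sure the natural equivalence $(\Ind_L^K Z)^K\simeq Z^L$ is compatible with the maps. I would handle this by working at the level of genuine fixed-point functors, using that $(-)^K\circ\operatorname{Coind}_L^K\simeq(-)^L$ holds formally as a composite of right adjoints.
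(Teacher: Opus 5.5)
Your arguments for (1) and (3)--(5) match the paper's. The paper also combines the double-coset decomposition, the projection formula and $\Ind\simeq\operatorname{Coind}$ to split $(S^{-W}\wedge\Res_L^G\Ind_H^G X)^L$, naturally in $X$, as a wedge over $L\backslash G/H$ of the twisted fixed points $(S^{-W_g}\wedge\Res_{K_g}^H X)^{K_g}$. It then gets (3)--(5) from the identity-double-coset retract of $\Res_H^G\Ind_H^G X$, plus additivity and functoriality of induction.

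Your first route to (2) is also the paper's. Take $\pi_0$ of that splitting, and either restrict $W$ to integer degrees or take $W=\Res_L^G V$ with $V\in\RO(G)$. In the second case use $W_g\cong\Res_{K_g}^H\Res_H^G V$, which holds because conjugating a $G$-representation by $g$ does not change it. That isomorphism is the ``care'' you mention.

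The route to (2) you say you would actually take has a wrong step. Knowing only that $\Res_H^G P$ lies in the \emph{thick} closure of the $H$-detectors proves nothing. Vanishing of $[Q,f]^H$ passes to finite wedges and retracts of $Q$, but not to cofibers. Each detector family thickly generates all finite $H$-spectra. So for a nonzero ghost $f:X\to Y$ with $X$ finite, which is exactly what this paper constructs, $X$ lies in the thick closure, yet $[X,f]^H$ sends $\Id_X$ to $f\neq0$.

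What you need, and what is true, is that $\Res_H^G P$ is a finite wedge of $H$-detectors of the same type. This follows from the double coset formula:
\[
\Res_H^G(G/K_+\wedge S^V)\simeq\bigvee_{HgK}H/(H\cap gKg^{-1})_+\wedge S^{\Res_H^G V},
\]
with $\Res_H^G V\in\RO(H)$, and $\Res_H^G V$ an integer if $V$ is. Likewise $\Res_H^G(G_+\wedge_K S^W)$ is a wedge of spectra $H_+\wedge_{H\cap gKg^{-1}}S^{W'}$ with $W'\in\RO(H\cap gKg^{-1})$.

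With ``thick closure'' replaced by ``finite wedge'', your Wirthm\"uller adjunction $[P,\Ind_H^G f]^G\cong[\Res_H^G P,f]^H$ proves (2) uniformly for all three families. It also avoids the conjugation bookkeeping in the $\RO(G)$-graded case, a small but real advantage over the paper's derivation of (2) from (1).
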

\begin{proof}
Let $L\leq G$ and $W\in\RO(L)$. For each representative $g$ of
$L\backslash G/H$, let $K_g=H\cap g^{-1}Lg$, and let $W_g$ denote
the restriction of $W$ along $K_g\to L$, $k\mapsto gkg^{-1}$.
The double-coset decomposition, the projection formula, and the
identification of induction with coinduction give an equivalence
\[
 \bigl(S^{-W}\wedge\Res_L^G\Ind_H^G X\bigr)^L
 \simeq
 \bigvee_{[g]\in L\backslash G/H}
 \bigl(S^{-W_g}\wedge\Res_{K_g}^H X\bigr)^{K_g},
\]
natural in $X$. Each summand of the map induced by $f$ is null by
hypothesis. This proves~\textup{(1)}.
{Taking $\pi_0$ proves~\textup{(2)} for the all-subgroup
representation-orbit family. Restricting $W$ to integer degrees
proves the integer-graded case. For the $\RO(G)$-graded case, take
$W=\Res_L^G V$ with $V\in\RO(G)$. Each $W_g$ is then isomorphic
to the restriction of $\Res_H^G V$, so the $\RO(H)$-graded ghost
condition on $f$ applies.}

Restricting back to $H$, the identity double coset gives $X$ as a
direct summand of $\Res_H^G\Ind_H^G X$ and $f$ as the corresponding
block of $\Res_H^G\Ind_H^G f$. Therefore induction preserves
nonzero maps. Since induction is additive and preserves composition,
applying this argument to integer multiples and composition powers
proves~\textup{(3)--(5)}. The same splitting holds in the Borel category.
\end{proof}

\begin{proof}[Proof of Theorem~\ref{thm:main}]
Let $p$ be a prime divisor of $|G|$, and choose a subgroup
$K\leq G$ of order $p$. Identify $K$ with $C_p$, and let
\[
 X=\Ind_K^G X_p,\qquad f=\Ind_K^G f_p.
\]
By Propositions~\ref{prop:twisted-nullity} and~\ref{lem:chains},
all representation-twisted fixed-point maps of $f_p$ are null at
every subgroup of $C_p$, and $f_p$ remains nonzero in the Borel
category. Lemma~\ref{lem:induction} gives the same properties for $f$.
The spectrum $X$ is finite and already $p$-local, since $X_p$ is.
Induction carries each free $K$-orbit cell to one free $G$-orbit
cell, so $X$ has four free orbit cells. The prime-order case in
Subsection~\ref{subsec:prime-proof} and Lemma~\ref{lem:induction}(5)
give $f$ exact additive order $p$.

Taking $\pi_0$ of the representation-twisted fixed-point maps of $f$
gives
\[
 [S^W,\Res_H^G f]^H=0
 \qquad(H\leq G,\ W\in\RO(H)).
\]
Since $f\ne0$ and $X$ is already $p$-local, this also proves
Theorem~\ref{thm:main-results}\textup{(\ref{item:main-failure})},
integrally and $p$-locally.
\end{proof}

\begin{proposition}\label{prop:small-refinements}
For every nontrivial finite group $G$ and every prime $p\mid |G|$,
the map $f:X\to X$ constructed in the proof of
Theorem~\ref{thm:main} satisfies
\[
 f\circ f=0.
\]
\end{proposition}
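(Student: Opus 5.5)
By Lemma~\ref{lem:induction}(4), $(\Ind_K^G f_p)^2=0$ if and only if $f_p^2=0$. It therefore suffices to treat $G=C_p$ and show $f_p\circ f_p=0$ on $X_p$, for $p$ odd and for $p=2$.

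The plan is to use the factorization from Lemma~\ref{lem:transfer-extension}. After smashing with the Moore spectrum we have $f_p=-i\bar t$ with $\bar t\, i=[C_p/e]$. Hence
\[
 f_p\circ f_p=i\bar t\, i\bar t=i\,[C_p/e]\,\bar t .
\]
By Lemma~\ref{lem:euler-divisibility}(1), on $S^\lambda\wedge S/p$ we have $[C_p/e]=-a_\lambda b_\lambda$. Since $i a_\lambda=0$ by exactness of~\eqref{eq:euler-triangle}, this gives
\[
 f_p^2=-i a_\lambda b_\lambda\bar t=0 .
\]
One point to check is the ordering. Here $b_\lambda$ is a self-map of degree $\lambda$ from the sphere, smashed into $S^\lambda\wedge S/p$. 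The Euler class $a_\lambda$ and $b_\lambda$ are elements of the graded-commutative ring $\pi_\star^{C_p}S$ acting by smashing. So $[C_p/e]=-a_\lambda b_\lambda$ can be written as a map that first applies $b_\lambda$ and then $a_\lambda$ into $S^\lambda\wedge S/p$. It then composes with $i$ to zero, because $i\circ(a_\lambda\wedge \Id)=0$ after any smash with an object. I would phrase this as: $i\circ(a_\lambda\wedge Z)=0$ for any $Z$, applied with $Z$ the source of $b_\lambda\bar t$ suitably desuspended.

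For $p=2$, the same argument applies with $f_2=-i(2\bar t_2)$:
\[
 f_2^2=i(2\bar t_2)i(2\bar t_2)=i\,(2\cdot 2[C_2/e])\,\bar t_2 .
\]
Since $4\cdot\Id_{S/4}=0$ by Lemma~\ref{lem:moore}(2), this is already zero. Alternatively, Lemma~\ref{lem:euler-divisibility}(2) gives $2[C_2/e]=-a_{2\sigma}\eta_{C_2}^2$, which is killed by $i$.

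The main obstacle is the bookkeeping in the first step: making sure $\bar t\, i=[C_p/e]$ still holds on the Moore-smashed objects, and that the commutation of Burnside-ring and Euler-class operations through $\bar t$ is legitimate. Both follow because all these maps are smash products of maps of spheres with $\Id_{S/p}$, and $[C_p/e]$ is central as an element of $\pi_0^{C_p}S$ acting on every spectrum.
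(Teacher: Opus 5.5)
Your argument is correct, but it uses the Euler-cell side of $X_p$, whereas the paper uses the free-cell side.

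\textbf{The paper's route.} It smashes the free-cell triangle \eqref{eq:circle-triangle} with the Moore spectrum and uses $f_pj_p=0$ to factor $f_p=b_p\partial_p$ through the free spectrum $F_p=(C_p)_+\wedge\Sigma^2L$. It then gets $f_pb_p=0$ from the induction--restriction adjunction, since $\Res_e^{C_p}f_p=0$. That argument is uniform in $p$. It needs only underlying nullity and vanishing on the bottom free cells, so it would apply to any such endomorphism.

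\textbf{Your route.} You factor $f_p=-i\bar t$ through the bottom Moore spectrum $S^\lambda\wedge S/p$ and prove the stronger fact $f_p i=-i[C_p/e]=0$. This shows $f_p$ kills every map factoring through $i$. It depends on the $p$-local class $b_\lambda$ of Lemma~\ref{lem:euler-divisibility}, hence on $\pi_1S_{(p)}=\pi_2S_{(p)}=0$. At $p=2$ your argument is even simpler than you wrote: $f_2^2=4(1-q_3)^2$ vanishes because $4\cdot\Id_{X_2}=0$.

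\textbf{The ordering point you flag.} The cleanest justification is by marks. The composite $a_\lambda\circ b_\lambda\colon S^\lambda\to S^0\to S^\lambda$ lies in $[S^\lambda,S^\lambda]^{C_p}\cong A(C_p)$. Its marks are $(0,p)$: $a_\lambda$ is underlying null, $\Phi^{C_p}(a_\lambda)=1$, and $\Phi^{C_p}(b_\lambda)=p$. Hence $a_\lambda\circ b_\lambda=p-[C_p/e]$. After smashing with $S/p$ this gives $i\circ[C_p/e]=-(i\circ a_\lambda)\circ b_\lambda=0$, with no appeal to graded commutativity needed.
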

\begin{proof}
We first prove the assertion for $f_p$.
Let $L$ denote $S/p$ for odd $p$ and $S/4$ for $p=2$.
Smashing~\eqref{eq:circle-triangle} with $L$ gives a cofiber sequence
\[
 (C_p)_+\wedge\Sigma L\xrightarrow{j_p}X_p
 \xrightarrow{\partial_p}(C_p)_+\wedge\Sigma^2L.
\]
We have $f_pj_p=0$ by~\eqref{eq:free-power-relations}, and
$\Res_e^{C_p}f_p=0$ by Proposition~\ref{prop:twisted-nullity}.
Let $F_p=(C_p)_+\wedge\Sigma^2L$.
Exactness gives a factorization $f_p=b_p\partial_p$ for some
$b_p:F_p\to X_p$.
By induction--restriction adjunction, $f_pb_p=0$, since
$\Res_e^{C_p}f_p=0$. Therefore $f_p^2=0$.

The assertion for $f=\Ind_K^G f_p$ now follows from
Lemma~\ref{lem:induction}(4).
\end{proof}

\section{Arbitrarily long ghosts}\label{sec:long-ghosts}\label{sec:projective}

In this section, we construct a natural ghost for $C_p$, whose powers are nontrivial on $\Sigma^\infty_+(BS^1)^p$. We then pass this construction to finite products of complex projective spaces, producing nontrivial arbitrarily long ghosts on finite $C_p$-spectra. By Lemma~\ref{lem:induction}, this construction can be promoted to every nontrivial finite group which proves Theorem~\ref{thm:long-main}.

Fix a prime \(p\) and a generator \(g\) of \(C_p\).

\subsection{A natural ghost}\label{subsec:natural-ghost}

Recall that $A(C_p)=\Z\{1, [C_p/e]\}$. The construction uses the canonical action of \(A(C_p)\) on \(C_p\)-spectra. Recall that restriction along \(i:e\hookrightarrow C_p\) and geometric fixed
points induce ring homomorphisms
\(i^*,\Phi:A(C_p)\longrightarrow\mathbb Z\), respectively.
They together give an injection
\begin{equation}\label{eq:burnside-detection}
 (i^*,\Phi):A(C_p)\hookrightarrow\mathbb Z^2,
 \qquad x+y[C_p/e]\longmapsto(x+py,x).
\end{equation}
which coincides with the marks homomorphism.
Multiplying the cyclic operation \(1-g\) by \(p-[C_p/e]\)
gives the following natural ghost.

\begin{proposition}\label{prop:natural-ghost}
For every \(C_p\)-spectrum \(X\), the natural endomorphism
\[
 \Psi_X=(p-[C_p/e])(1-g_X):X\longrightarrow X
\]
satisfies
\[
 \bigl(S^{-V}\wedge\operatorname{Res}_H^{C_p}\Psi_X\bigr)^H=0
 \qquad(H\leq C_p,\ V\in RO(H))
\]
as a map of classical spectra. In particular, \(\Psi_X\) is a ghost.
\end{proposition}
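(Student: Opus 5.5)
We treat the two subgroups $H=e$ and $H=C_p$ separately, and use naturality of both factors of $\Psi_X$ throughout. The Burnside ring acts on every $C_p$-spectrum through $\pi_0^{C_p}S=A(C_p)$, and $g_X$ is a natural automorphism. Consequently both $S^{-V}\wedge-$ and the functors $(-)^H$, $\Res_H^{C_p}$ carry $\Psi_X$ to the corresponding natural operation on $S^{-V}\wedge X$.

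\textbf{Case $H=e$.} Restriction to the trivial subgroup is a ring map on Burnside rings, and by \eqref{eq:burnside-detection} it sends $p-[C_p/e]$ to $p-p=0$. So $\Res_e^{C_p}\Psi_X=0\cdot(1-g)=0$ as a map of spectra. Smashing with $S^{-V}$, $V\in\RO(e)$, preserves nullity.

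\textbf{Case $H=C_p$.} Fix $V\in\RO(C_p)$ and put $Z=S^{-V}\wedge X$. Then $S^{-V}\wedge\Psi_X=\Psi'_Z$, where $\Psi'_Z=(p-[C_p/e])(1-g'_Z)$ and $g'$ is the diagonal action of $g$ on $S^{-V}\wedge X$. The problem is thus reduced to showing that $\Psi_Z^{C_p}$ is null for every $C_p$-spectrum $Z$, with $g$ any action of the form ``diagonal action''. We take the isotropy separation cofiber sequence
\[
 (EC_{p+}\wedge Z)^{C_p}\longrightarrow Z^{C_p}\longrightarrow(\widetilde{EC_p}\wedge Z)^{C_p}=\Phi^{C_p}Z .
\]
The key point is that on categorical fixed points, $g$ acts on $Z^{C_p}$ by the identity, because it is an element of the group whose fixed points we take: the map $g:Z\to Z$ is equivariant through the inner (trivial, since $C_p$ is abelian) conjugation, and conjugation by a group element acts trivially on genuine fixed points. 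Hence $(1-g_Z)^{C_p}$ is null, and therefore so is $\Psi_Z^{C_p}=(p-[C_p/e])^{C_p}\circ(1-g_Z)^{C_p}$.

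The step I expect to be the main obstacle is justifying that $g$ acts as the identity on $Z^{C_p}$ \emph{as a map of spectra}, not merely on homotopy, in the genuine category. I would argue this by writing $g_Z$ as the composite of the unit $Z\cong S\wedge Z$ with the self-map of $S^0=(C_p/C_p)_+$ induced by right multiplication. On the orbit $C_p/C_p$ that self-map is the identity. Equivalently, the action of the Weyl group $W_{C_p}C_p=1$ on $Z^{C_p}$ is trivial.

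If this naive argument fails for the diagonal twist in $S^{-V}\wedge X$, I would fall back on the factorization used in the $C_p$ case of Proposition~\ref{prop:twisted-nullity}. On geometric fixed points, $\Phi^{C_p}(1-g)=0$, so $(1-g)$ lands, up to the sequence above, in the free part $(EC_{p+}\wedge Z)^{C_p}$. On that free part, $[C_p/e]$ acts as $p$ via the transfer–restriction identity $\mathrm{tr}\circ\mathrm{res}$, so $p-[C_p/e]$ kills it. Assembling the two vanishings into a nullity of the whole composite, and not just of its two graded pieces, is where care is needed. I would handle it by showing that $p-[C_p/e]$ factors through $\widetilde{EC_p}\wedge-$ and $1-g$ factors through $EC_{p+}\wedge-$, so that their composite contains $\widetilde{EC_p}\wedge EC_{p+}\simeq *$.
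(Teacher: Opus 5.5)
Your $H=e$ case matches the paper. The gap is in the $H=C_p$ reduction, which you treat as automatic but which carries the content of the proposition. Naturality does not give $S^{-V}\wedge g_X=g'_Z$. The left side is $\Id_{S^{-V}}\wedge g_X$, while the diagonal action is $g'_Z=g_{S^{-V}}\wedge g_X$. Hence $\Id_{S^{-V}}\wedge g_X=c\cdot g'_Z$ with $c=g_{S^V}\in[S^V,S^V]^{C_p}\cong A(C_p)$, using $g_{S^{-V}}g_{S^V}=g_{S^0}=1$.

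This $c$ need not be $1$. For $p=2$, $g$ acts on $S^\sigma$ antipodally, with marks $(-1,1)$, so $c=1-[C_2/e]$. Take $X=S^\sigma$ and $V=\sigma$: the map $(S^{-\sigma}\wedge(1-g_X))^{C_2}$ is $[C_2/e]$ acting on $S^{C_2}$, which is nonzero on $\pi_0$, whereas $(1-g'_Z)^{C_2}=0$. Your identity $S^{-V}\wedge\Psi_X=\Psi'_Z$ is nevertheless true, but only because of the Burnside factor. Since $c$ has marks $(\pm1,1)$ and $p-[C_p/e]$ has marks $(0,p)$, one gets $(p-[C_p/e])c=p-[C_p/e]$. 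Establishing this needs $i^*(p-[C_p/e])=0$ together with $\Phi^{C_p}(g_{S^V})=1$ for virtual $V$. The latter uses compatibility of geometric fixed points with smash products and duals. This is precisely the computation the paper uses to show $\Psi_{S^V}=0$ in $A(C_p)$, and your proposal never makes it.

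Your claim $(g_Z)^{C_p}\simeq\Id$ is true, and your Weyl-group remark points to the right reason, but the argument you sketch is wrong. If $g_Z$ were the unit followed by a self-map of $S^0=(C_p/C_p)_+$ smashed with $Z$, then $g_Z$ itself would equal $\Id_Z$. That already fails for $Z=(C_p)_+$ or $Z=S^\sigma$. A correct proof is the paper's adjunction argument with $V=0$. The operation $g$ is natural and is the identity on inflated spectra, and $(-)^{C_p}$ is right adjoint to inflation. So $(g_Z)^{C_p}$ is adjoint to $g_Z\circ\varepsilon=\varepsilon\circ g_{\operatorname{infl}(Z^{C_p})}=\varepsilon$.

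With these two repairs your route is a valid reorganization of the paper's proof. You absorb the twist into the Burnside factor and then use untwisted fixed points; the paper instead kills $\Psi_{S^V}$ and applies the $S^V$-twisted adjunction directly.

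Discard the fallback, however: $p-[C_p/e]$ does not factor through $\widetilde{EC_p}\wedge-$. Already for $Z=S^0$, its composite with $EC_{p+}\to S^0$ is its image in $[EC_{p+},S^0]^{C_p}\cong A(C_p)^\wedge_I$. By the Segal conjecture this image is nonzero, so $[C_p/e]$ does not act as $p$ on the free part. Moreover, if both of your factorizations held, $\Psi_X$ would be zero for every $X$, since the composite $EC_{p+}\wedge X\to X\to\widetilde{EC_p}\wedge X$ is null. That contradicts Theorem~\ref{thm:nonvanishing}.
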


\begin{proof}
For \(H=e\), the assertion follows from
\(i^*(p-[C_p/e])=0\). Let \(H=C_p\) and \(V\in RO(C_p)\).
We first show that \[\Psi_{S^V}=(p-[C_p/e])(1-g):S^V\to S^V\]
is null. Under the identification \([S^V,S^V]^{C_p}\cong A(C_p)\),
we regard \(\Psi_{S^V}\) as an element of the Burnside ring.
By the case \(H=e\), it remains to check its image under \(\Phi\).
For an actual representation, \(g\) acts identically on its fixed
subspace, and this remains true for
virtual representations since geometric fixed points preserve
smash products and duals of representation spheres
\cite[V, Corollary 4.6 and Proposition 4.7]{MM}.
We obtain \(\Phi(g)=1\) and
\[
 \Phi(\Psi_{S^V})=p(1-\Phi(g))=0.
\]
Injectivity of \eqref{eq:burnside-detection} gives \(\Psi_{S^V}=0\).

Let \(Y\) be a classical spectrum with trivial \(C_p\)-action.
The diagonal action gives
\[
 \Psi_{S^V\wedge Y}=\Psi_{S^V}\wedge\mathrm{id}_Y=0.
\]
For every \(C_p\)-equivariant map \(a:S^V\wedge Y\to X\),
we have the following commutative diagram by naturality
\[
\begin{tikzcd}[column sep=large, row sep=large]
 S^V\wedge Y \arrow[r, "a"] \arrow[d, "\Psi_{S^V\wedge Y}=0"']
   & X \arrow[d, "\Psi_X"] \\
 S^V\wedge Y \arrow[r, "a"'] & X,
\end{tikzcd}
\]
so \(\Psi_X\circ a=0\). On the other hand, in the adjunction
\cite[V, Proposition 3.4]{MM},
\[
 [S^V\wedge Y,X]^{C_p}
 \cong [Y,(S^{-V}\wedge X)^{C_p}],
\]
\(\Psi_X\circ a\) is the adjoint of
\begin{equation}\label{eq:adjointY}
     Y\xrightarrow{\widetilde a}(S^{-V}\wedge X)^{C_p}
 \xrightarrow{(S^{-V}\wedge \Psi_X)^{C_p}}(S^{-V}\wedge X)^{C_p},
\end{equation}
for some \(\widetilde a\) adjoint to \(a\). The composite \eqref{eq:adjointY} is
therefore zero for every \(Y\) and \(\widetilde a\).
Taking \(Y=(S^{-V}\wedge X)^{C_p}\) and
\(\widetilde a=\mathrm{id}\) proves the assertion.
\end{proof}

\begin{remark}[Powers]\label{rem:powers}
For the endomorphism \(\Psi_X\) of Proposition~\ref{prop:natural-ghost}
on any \(C_p\)-spectrum \(X\), one has
\[
 \Psi_X^n=p^{n-1}(p-[C_p/e])(1-g)^n
 \in p^{\,n-1+\lfloor(n-1)/(p-1)\rfloor}[X,X]^{C_p}
 \qquad(n\ge1).
\]
For \(p=2\), this simplifies to \(\Psi_X^n=4^{n-1}\Psi_X\).
The powers tend to zero in the \(p\)-adic topology.
Theorem~\ref{thm:nonvanishing} will give an example for which no
positive power is annihilated by a power of \(p\).
\end{remark}

The induction argument of Section~\ref{sec:all-groups}
transports this construction to larger finite groups.

\begin{corollary}\label{cor:induced-ghost}
Let \(G\) be a finite group containing \(C_p\). For every
\(C_p\)-spectrum \(X\), let \(\Psi_X\) be the endomorphism of
Proposition~\ref{prop:natural-ghost}. Then the induced endomorphism
\(\Ind_{C_p}^G \Psi_X=G_+\wedge_{C_p}\Psi_X\) satisfies
\[
 \bigl(S^{-V}\wedge\operatorname{Res}_H^G\Ind_{C_p}^G \Psi_X\bigr)^H=0
 \qquad(H\leq G,\ V\in RO(H))
\]
as a map of classical spectra. In particular, \(\Ind_{C_p}^G \Psi_X\)
is a ghost.
\end{corollary}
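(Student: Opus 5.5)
This follows by combining Proposition~\ref{prop:natural-ghost} with the induction principle of Lemma~\ref{lem:induction}. Fix a $C_p$-spectrum $X$ and set $f=\Psi_X:X\to X$, regarded as a map of $C_p$-spectra with $H=C_p$ playing the role of the subgroup in Lemma~\ref{lem:induction}. Proposition~\ref{prop:natural-ghost} says that for every $K\leq C_p$ and every $V\in\RO(K)$ the map of classical spectra $\bigl(S^{-V}\wedge\Res_K^{C_p}\Psi_X\bigr)^K$ is null. This is exactly the hypothesis of Lemma~\ref{lem:induction}(1).

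Lemma~\ref{lem:induction}(1) then yields, for every $H\leq G$ and $V\in\RO(H)$, that $\bigl(S^{-V}\wedge\Res_H^G\Ind_{C_p}^G\Psi_X\bigr)^H$ is null. Concretely, the proof of that lemma uses the natural double-coset decomposition
\[
 \bigl(S^{-V}\wedge\Res_H^G\Ind_{C_p}^G X\bigr)^H\simeq\bigvee_{[g]\in H\backslash G/C_p}\bigl(S^{-V_g}\wedge\Res_{K_g}^{C_p}X\bigr)^{K_g},
\]
where $K_g=C_p\cap g^{-1}Hg$ and $V_g\in\RO(K_g)$ is the conjugated restriction of $V$. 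Under this decomposition, $\Ind_{C_p}^G\Psi_X$ becomes the wedge of the maps $\bigl(S^{-V_g}\wedge\Res_{K_g}^{C_p}\Psi_X\bigr)^{K_g}$, each of which is null by Proposition~\ref{prop:natural-ghost}. The only point to check is that this equivalence is natural in the $C_p$-spectrum, so that it intertwines the induced map with the summandwise maps. Naturality holds because each ingredient (double-coset formula, projection formula, and the identification of induction with coinduction) is natural.

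The ghost statement follows by taking $\pi_0$: $[S^V,\Res_H^G\Ind_{C_p}^G\Psi_X]^H=0$ for all $H\leq G$ and $V\in\RO(H)$, which is the all-subgroup representation-orbit ghost condition. By Lemma~\ref{lem:induction}(2), or by specializing $V$, it also gives the $\RO(G)$-graded and integer-graded ghost conditions. I expect no real obstacle. The only subtlety is the naturality of the decomposition just noted, and that is already part of Lemma~\ref{lem:induction}.
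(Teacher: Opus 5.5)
Your proposal is correct and follows the paper's own argument: it feeds the vanishing from Proposition~\ref{prop:natural-ghost}, for all $K\leq C_p$ and $V\in\RO(K)$, into Lemma~\ref{lem:induction}(1) and then takes $\pi_0$. Your unpacking of the natural double-coset decomposition is just the content of that lemma's proof.
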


\begin{proof}
Apply Lemma~\ref{lem:induction} to the vanishing in
Proposition~\ref{prop:natural-ghost}.
\end{proof}

\subsection{A nonnilpotent ghost}\label{subsec:nonvanishing}

Having constructed a natural ghost on every \(C_p\)-spectrum,
we now ask whether it can be nonzero. The following theorem shows
that on \(\Sigma^\infty_+(BS^1)^p\) it is not nilpotent. In fact,
no positive power is annihilated by a power of \(p\).

We equip \((S^1)^p\) with the \(C_p\)-action
\[
 g(x_0,\ldots,x_{p-1})=(x_1,\ldots,x_{p-1},x_0),
\]
which induces the same coordinate permutation on \((BS^1)^p\).

\begin{theorem}\label{thm:nonvanishing}
For \(X=\Sigma^\infty_+(BS^1)^p\), the ghost
\[
 \Psi_X=(p-[C_p/e])(1-g):X\longrightarrow X
\]
satisfies
\[
 p^m \Psi_X^n\ne0\quad\text{in}\quad\bigl[\Sigma^\infty_+(BS^1)^p,\Sigma^\infty_+(BS^1)^p\bigr]^{C_p}
 \qquad(m\ge0,\ n\ge1).
\]
\end{theorem}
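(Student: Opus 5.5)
The plan is to detect $p^m\Psi_X^n$ after restricting to finite skeleta, using Borel chains with $C_p$-action, as in Proposition~\ref{lem:chains}. First, two invariants that look natural cannot work. The map $\Psi_X$ is a ghost by Proposition~\ref{prop:natural-ghost}, so it is invisible on every $\pi_V^H$. Its underlying map is null because $i^*(p-[C_p/e])=0$, so it is also invisible on ordinary homology. It is invisible rationally as well, by Greenlees--May. Hence $p^m\Psi_X^n$ is rationally zero, and can fail to be $p$-power torsion only through a $\lim$-type phenomenon over the skeleta of $(BS^1)^p$.

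\textbf{Step 1: reduction to finite stages.} I would write $X=\operatorname{hocolim}_d X_d$ with $X_d=\Sigma^\infty_+(\CP^d)^p$. Each $X_d$ is a $C_p$-invariant finite subcomplex, and its cells are the monomials $x_0^{a_0}\cdots x_{p-1}^{a_{p-1}}$ with $a_k\le d$. These fall into free $C_p$-orbits together with the fixed diagonal monomials $(x_0\cdots x_{p-1})^a$. By naturality, $\Psi_X^n$ restricts to $\Psi_{X_d}^n$ composed with the inclusion $X_d\to X$. It therefore suffices to show the following: for fixed $m,n$ and all sufficiently large $d$, the composite $X_d\to X\xrightarrow{p^m\Psi_X^n}X$ is nonzero. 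Equivalently, the order of $\Psi_X^n|_{X_d}$ should tend to infinity $p$-adically as $d\to\infty$. This is compatible with Remark~\ref{rem:powers}, since $\Psi^n$ is highly $p$-divisible but the groups it lives in have growing exponent.

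\textbf{Step 2: algebraic model.} I would pass to $\Fun(BC_p,\Sp)$ and smash with $H\Z/p^r$ for $r$ large, reading off the image of $\Psi$ on $C_*((\CP^d)^p;\Z/p^r)$ in $\D(\Z/p^r[C_p])$, as in Proposition~\ref{lem:chains}. On the free summands $\Z/p^r[C_p]$ and on the trivial summands $\Z/p^r$, the endomorphism $\Psi$ acts by zero separately. The reason is that $1-g$ kills the trivial summands, while $p-[C_p/e]$ acts by $i^*(p-[C_p/e])=0$ on induced objects. So any nonzero contribution must come from maps between these pieces. These maps are the cross terms coming from the attaching data of the fixed diagonal cells onto free orbits, analogous to the norm term $\tau_1=-N$ in Proposition~\ref{lem:chains}, which arose from the power map $q_{p+1}$. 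I would make the Borel action of $p-[C_p/e]$ explicit through the transfer $S\to(C_p)_+$, as in Lemma~\ref{lem:free-cell-power}. The aim is to show that $(p-[C_p/e])(1-g)$ acts on the relevant two-step pieces (a diagonal cell and the free orbit just above it) by a chain map of the shape $-N$, up to a unit.

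\textbf{Step 3: composing powers, and the main obstacle.} The $n$-th power should then be represented by a zig-zag through $n$ such two-step pieces. This requires a large enough range of diagonal and free cells, which is why $d$ must grow with $n$. I would show that the resulting chain map is not nullhomotopic over $\Z/p^r[C_p]$ even after multiplying by $p^m$, provided $r>m+n-1+\lfloor (n-1)/(p-1)\rfloor$. The argument would mirror the nullhomotopy obstruction in Proposition~\ref{lem:chains}: a nullhomotopy would force $p^{m}\cdot(\text{unit})\cdot N^{\,n}$-type terms, which equal $p^{m+n-1}N$, to lie in the image of $1-g$, and this fails in $\Z/p^r[C_p]$.

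I expect the main obstacle to be Step 2. The difficulty is identifying precisely the cross terms of $\Psi$ between the diagonal and free pieces of the chains of $(\CP^d)^p$, and showing they are not accidentally nullhomotopic. What I would try first is to compute these terms via the equivariant diagonal $\CP^d\to(\CP^d)^p$ and the Steenrod--Kudo description of the chains of a cyclic power. This description makes the norm classes connecting the diagonal cells to the free orbits explicit.
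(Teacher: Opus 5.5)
There is a genuine gap, and it lies in the detection tool of Steps~2--3: after you smash with $H\Z/p^r$, the map $\Psi_X$ becomes identically zero.

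\textbf{Why the chain-level image vanishes.} Under $H\Z/p^r\wedge-$ with trivial action on the coefficients, every Burnside scalar acts through an integer. The functor $\Fun(BC_p,\Sp)\to\D(\Z/p^r[C_p])$ is symmetric monoidal. The unit of the target is $\Z/p^r$ with trivial action, and
\[
\operatorname{End}_{\D(\Z/p^r[C_p])}(\Z/p^r)=H^0(C_p;\Z/p^r)=\Z/p^r .
\]
So $[C_p/e]$ acts on every object by its underlying degree $i^*[C_p/e]=p$. You can see this concretely in the transfer description you propose. On chains, the transfer followed by the collapse is $m\mapsto N\otimes m\mapsto pm$.

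Hence $p-[C_p/e]$ acts by zero on all of $\D(\Z/p^r[C_p])$, and more generally on any $HR$-module with trivial action on $R$. It follows that $H\Z/p^r\wedge\Psi_X\simeq0$ for every $X$, finite or not. Consequently:
\begin{enumerate}
\item The cross terms ``of shape $-N$'' that you hope to extract between diagonal cells and free orbits in Step~2 do not exist.
\item The chain map whose non-nullhomotopy you plan to prove in Step~3 is already zero.
\end{enumerate}
The analogy with Proposition~\ref{lem:chains} breaks because $f_p=1-q_{p+1}$ contains no Burnside scalar. By contrast, the entire content of $\Psi$ lies in the difference between $[C_p/e]$ and $p$, and ordinary chains with $C_p$-action cannot see that difference. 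Since $\Psi_X$ is also null on underlying spectra and on all twisted categorical fixed points, you cannot fall back on any of those invariants either.

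\textbf{What is missing.} You need a detector that sees the Burnside ring faithfully, at least $p$-adically. This forces you to keep sphere coefficients. The paper stays in the Borel category with $S^\wedge_p$ and uses the Segal conjecture $A(C_p)^\wedge_p\cong\pi_0((S^\wedge_p)^{hC_p})$. There $\Phi$ extends $p$-adically and $\Phi(p-[C_p/e])=p\neq0$.

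To get coordinates on $(BS^1)^p$, the paper proceeds as follows.
\begin{enumerate}
\item It restricts along $B\varphi$ for the $C_p$-equivariant embedding $\varphi:C_{p^2}\to(S^1)^p$, $h\mapsto(\zeta,\zeta^{1+p},\ldots,\zeta^{1+p(p-1)})$, with $C_p$ acting on $C_{p^2}$ by $gh=h^{1+p}$.
\item It uses the May--Snaith--Zelewski splitting of $F(\Sigma^\infty_+BC_{p^2},\Sigma^\infty_+(BS^1)^p)^\wedge_p$ to isolate the $p$ summands indexed by the maps $g^j\varphi$.
\item From this it builds an $A(C_p)$-linear map $\Xi$ to $\Z_p[C_p]$ with $\Xi(g^j\varphi)=g^j$.
\end{enumerate}
This gives $\Xi(p^m\Psi_X^n\varphi)=p^{m+n}(1-g)^n\neq0$.

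Your Step~1 reduction to finite stages is logically fine, since nonvanishing on some $X_d$ implies nonvanishing on $X$. It is not needed for this statement, however. The paper works on $(BS^1)^p$ directly and passes to finite stages only afterwards (Theorem~\ref{thm:finite-stages}), via a Milnor sequence together with a torsion argument for the $\varprojlim^1$ term.
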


\begin{proof}
The May--Snaith--Zelewski splitting \cite[Theorem 2.3]{MSZ}
requires a finite source group. We therefore restrict along a \(C_p\)-equivariant embedding
\(\varphi:C_{p^2}\hookrightarrow(S^1)^p\) and prove that
\(p^m \Psi_X^n\varphi\ne0\).

Write \(C_{p^2}=\langle h\rangle\). Choose a primitive
\(p^2\)-th root of unity \(\zeta\in S^1\), and define
\[
 \varphi:C_{p^2}\longrightarrow(S^1)^p,
 \qquad
 \varphi(h)=\bigl(\zeta,\zeta^{1+p},\ldots,
                         \zeta^{1+p(p-1)}\bigr).
\]
The congruence \((1+p)^j\equiv1+pj\pmod{p^2}\) shows that
\(h\mapsto h^{1+p}\) has order \(p\). This defines a
\(C_p\)-action on \(C_{p^2}\) by \(gh=h^{1+p}\).
Since \((1+pi)(1+pj)\equiv1+p(i+j)\pmod{p^2}\), we have
\[
 \varphi(g^jh)=\varphi(h^{1+pj})
   =\bigl(\zeta^{1+pj},\zeta^{1+p(j+1)},\ldots,
                            \zeta^{1+p(j+p-1)}\bigr)
   =g^j\varphi(h).
\]
This proves that \(\varphi\) is \(C_p\)-equivariant.

We use a splitting of the mapping spectrum to separate the maps
\(g^j\varphi\). For \(L\le C_{p^2}\) and
\(\rho:L\to(S^1)^p\), define
\[
 W_\rho=\bigl(C_{p^2}\times(S^1)^p\bigr)/\operatorname{graph}(\rho).
\]
Since \(C_{p^2}\times(S^1)^p\) is abelian, the splitting of
May--Snaith--Zelewski \cite[Theorem 2.3]{MSZ} takes the form
\begin{equation}\label{eq:msz}
 \xi:
 \bigvee_{\substack{L\le C_{p^2}\\\rho:L\to(S^1)^p}}
 (\Sigma^\infty_+BW_\rho)^\wedge_p
 \xrightarrow{\simeq} F\!\left(\Sigma^\infty_+BC_{p^2},\Sigma^\infty_+(BS^1)^p\right)^\wedge_p.
\end{equation}
Here \(F\) is the classical mapping spectrum. The wedge is finite,
so \(p\)-completion may be taken summandwise.

The \(C_p\)-action on the source preserves each subgroup \(L\).
The map from the \(\rho\)-summand to the \(g\rho g^{-1}\)-summand
is induced by
\[
 g:W_\rho\longrightarrow W_{g\rho g^{-1}},
 \qquad [(u,x)]\longmapsto[(gu,gx)],
\]
where \(u\in C_{p^2}\) and \(x\in(S^1)^p\).
The target carries the conjugation action
\(\psi\mapsto g\psi g^{-1}\).
We check that \(\xi\) is \(C_p\)-equivariant by describing its
summand maps. Before completion, the map on the
\(\rho\)-summand is adjoint to the composite
\cite[Theorem 8 and Remarks 9]{LMM}
\begin{equation}\label{eq:correspondence}
 \Sigma^\infty_+B(C_{p^2}\times W_\rho)
 \longrightarrow
 \Sigma^\infty_+B(C_{p^2}\times(S^1)^p)
 \xrightarrow{B\operatorname{pr}_{(S^1)^p}}\Sigma^\infty_+(BS^1)^p.
\end{equation}
To construct the first map in \eqref{eq:correspondence}, consider the homomorphism
\[
 \kappa_\rho:C_{p^2}\times(S^1)^p\longrightarrow C_{p^2}\times W_\rho,
 \qquad (u,x)\longmapsto(u,[(u,x)]).
\]
Since \([(1,x)]=1\) forces \(x=1\), the map \(\kappa_\rho\)
is injective. The surjective homomorphism
\[
 C_{p^2}\times W_\rho\longrightarrow C_{p^2}/L,
 \qquad (u,[(u',x)])\longmapsto u'u^{-1}L
\]
has kernel \(\operatorname{im}(\kappa_\rho)\), so
\(B\kappa_\rho\) is modeled by a covering of degree
\([C_{p^2}:L]\). Then the first arrow in
\eqref{eq:correspondence} is the transfer along this covering.

The identities
\[
 \kappa_{g\rho g^{-1}}(gu,gx)=(g\times g)\kappa_\rho(u,x),
 \qquad \operatorname{pr}_{(S^1)^p}(gu,gx)=g\operatorname{pr}_{(S^1)^p}(u,x)
\]
ensure the compatibility of \(\kappa_\rho\) and \(\operatorname{pr}_{(S^1)^p}\)
with the \(C_p\)-actions. By the functoriality of the bar
construction and the transfer, the family \eqref{eq:correspondence} is
coherently \(C_p\)-equivariant. Adjunction and completion therefore
make \eqref{eq:msz} an equivalence in \(\Fun(BC_p,\Sp)\).

We now consider the summands with \(L=C_{p^2}\) and
\(\rho=g^j\varphi\), \(0\le j<p\).
The first coordinates of \(g^j\varphi(h)\) are the distinct roots
\(\zeta^{1+pj}\), {so the maps \(g^j\varphi\) correspond to distinct summands.} The \(C_p\)-equivariance of \(\varphi\)
gives \(g(g^j\varphi)g^{-1}=g^j\varphi\), so each summand is
\(C_p\)-invariant. To compute the action and the adjoint
\eqref{eq:correspondence} of the \(g^j\varphi\)-summand, consider the isomorphism
\begin{equation}\label{eq:weyl}
 W_{g^j\varphi}\xrightarrow{\cong}(S^1)^p,
 \qquad [(u,x)]\longmapsto\bigl(g^j\varphi(u)\bigr)^{-1}x
\end{equation}
whose inverse is \(x\mapsto[(1,x)]\). The identity
\[
 \bigl(g^j\varphi(gu)\bigr)^{-1}gx
   =g\!\left(\bigl(g^j\varphi(u)\bigr)^{-1}x\right)
\]
shows that \eqref{eq:weyl} is \(C_p\)-equivariant for the quotient
action on \(W_{g^j\varphi}\) and cyclic permutation on \((S^1)^p\).
Under this identification, \(\kappa_{g^j\varphi}\) is the
isomorphism
\[
 \kappa_{g^j\varphi}:C_{p^2}\times(S^1)^p
 \xrightarrow{\cong}C_{p^2}\times(S^1)^p,
 \qquad (u,x)\longmapsto
 \bigl(u,(g^j\varphi(u))^{-1}x\bigr).
\]
The transfer in \eqref{eq:correspondence} is therefore induced by
its inverse, so the adjoint of this summand is induced by
\begin{equation}\label{eq:adjoint}
 C_{p^2}\times(S^1)^p\longrightarrow(S^1)^p,
 \qquad (u,x)\longmapsto g^j\varphi(u)x.
\end{equation}

The projection \(\operatorname{pr}_j\) onto each of these invariant summands is
\(C_p\)-equivariant. To extract its bottom-sphere coordinate, define
\begin{equation}\label{eq:theta}
 \begin{aligned}
 \theta_j: F\!\left(\Sigma^\infty_+BC_{p^2},\Sigma^\infty_+(BS^1)^p\right)^\wedge_p
 &\xrightarrow{\xi^{-1}}
   \bigvee_{L,\rho}(\Sigma^\infty_+BW_\rho)^\wedge_p\\
 &\xrightarrow{\operatorname{pr}_j}
   (\Sigma^\infty_+BW_{g^j\varphi})^\wedge_p
   \xrightarrow{\mathrm{collapse}}S^\wedge_p,
 \end{aligned}
\end{equation}
where \(\xi^{-1}\) is the inverse in \(\Fun(BC_p,\Sp)\) and the
last arrow is induced by \(BW_{g^j\varphi}\to *\), with trivial
\(C_p\)-action on \(S^\wedge_p\).

For \(\alpha\in\bigl[\Sigma^\infty_+BC_{p^2},\Sigma^\infty_+(BS^1)^p\bigr]^{C_p}\), the completed adjoint of
\(\alpha\) is a morphism in \(\Fun(BC_p,\Sp)\),
\begin{equation}\label{eq:completed-adjoint}
 S^\wedge_p\longrightarrow F\!\left(\Sigma^\infty_+BC_{p^2},\Sigma^\infty_+(BS^1)^p\right)^\wedge_p.
\end{equation}
Write \([\alpha]\) for its class in
\(\pi_0(( F\!\left(\Sigma^\infty_+BC_{p^2},\Sigma^\infty_+(BS^1)^p\right)^\wedge_p)^{hC_p})\).
Restricting \eqref{eq:adjoint} along \(u\mapsto(u,1)\)
gives \(g^j\varphi\). For \(\alpha=g^j\varphi\), composing
\eqref{eq:completed-adjoint} with \(\xi^{-1}\) gives the
bottom-sphere inclusion into the \(g^j\varphi\)-summand of
\eqref{eq:msz}, induced by its fixed bar vertex.
Its projection to the \(i\)-th summand is zero for \(i\ne j\).
For \(i=j\), the remaining composite in \eqref{eq:theta} is
induced by \(*\to BW_{g^j\varphi}\to *\), so it is the identity.
These identities hold in \(\Fun(BC_p,\Sp)\), so the induced
homotopy-fixed-point classes satisfy
\begin{equation}\label{eq:delta}
 (\theta_i^{hC_p})_*[g^j\varphi]=\delta_{ij}\cdot1,
 \qquad 0\le i,j<p.
\end{equation}

The Segal
conjecture for \(C_p\) \cite{Carlsson}
\begin{equation}\label{eq:segal}
 A(C_p)^\wedge_p\xrightarrow{\cong}\pi_0((S^\wedge_p)^{hC_p}).
\end{equation}
enables us to track the Burnside factor in \(\Psi_X\). Naturality of the sphere action and adjunction identify scalar multiplications of elements in \(A(C_p)\) with precomposition on \(S^\wedge_p\),
which commutes with postcomposition by \(\theta_j\).
Under \eqref{eq:segal}, the coordinate maps
\(\alpha\mapsto(\theta_j^{hC_p})_*[\alpha]\)
are \(A(C_p)\)-linear.

The homomorphism \(\Phi\) in \eqref{eq:burnside-detection}
extends \(p\)-adically to
\[
 \Phi:A(C_p)^\wedge_p=\mathbb Z_p\{1,[C_p/e]\}
 \longrightarrow\mathbb Z_p,
 \qquad x+y[C_p/e]\longmapsto x.
\]
Applying \(\Phi\) to these coordinates gives the additive homomorphism
\begin{equation}\label{eq:xi}
 \begin{aligned}
 \Xi:\bigl[\Sigma^\infty_+BC_{p^2},\Sigma^\infty_+(BS^1)^p\bigr]^{C_p}&\longrightarrow
              \mathbb Z_p[C_p]=\mathbb Z_p[g]/(g^p-1),\\
 \alpha&\longmapsto\sum_{j=0}^{p-1}
       \Phi\!\left((\theta_j^{hC_p})_*[\alpha]\right)g^j.
 \end{aligned}
\end{equation}
The map \(\Xi\) is \(A(C_p)\)-linear for the action on its target
induced by \(\Phi\), and \eqref{eq:delta} gives
\(\Xi(g^i\varphi)=g^i\) for \(0\le i<p\).
Using \(g^p=1\), \(\Phi(p-[C_p/e])=p\), and
Remark~\ref{rem:powers}, we obtain
\begin{equation}\label{eq:final}
 \Xi(p^m \Psi_X^n\varphi)
 =p^{m+n}\Xi((1-g)^n\varphi)
 =p^{m+n}(1-g)^n.
\end{equation}
Since the right-hand side is nonzero, \(p^m \Psi_X^n\varphi\ne0\), so
\(p^m \Psi_X^n\ne0\).
\end{proof}

\subsection{Finite stages}\label{subsec:finite-stages}

We now pass from $\Sigma^\infty_+ BS^1$ to finite spectra.
Using the standard model \(BS^1=\mathbb CP^\infty\), the coordinate
inclusions give
\[
 \Sigma^\infty_+(BS^1)^p\simeq\operatorname*{hocolim}_{k\ge0}\Sigma^\infty_+(\mathbb CP^k)^p.
\]
These inclusions are \(C_p\)-equivariant and commute with \(\Psi\).
We show that restriction along them detects every nonzero
\(p^m \Psi^n\) supplied by Theorem~\ref{thm:nonvanishing}.

\begin{theorem}\label{thm:finite-stages}
For every \(m\ge0\) and \(n\ge1\), the ghost \(\Psi\) on
\(\Sigma^\infty_+(\mathbb CP^k)^p\) satisfies \(p^m \Psi^n\ne0\) for all sufficiently
large \(k\).
\end{theorem}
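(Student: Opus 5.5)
The plan is to reduce the finite-stage statement to Theorem~\ref{thm:nonvanishing} through the witness map $\varphi$ used there. Write $X_k=\Sigma^\infty_+(\CP^k)^p$ and $X=\Sigma^\infty_+(BS^1)^p$, and let $\iota_k:X_k\to X$ be the $C_p$-equivariant coordinate inclusions. Naturality of $\Psi$ gives $\iota_k\Psi_{X_k}=\Psi_X\iota_k$, and hence $\iota_k\,p^m\Psi_{X_k}^n=p^m\Psi_X^n\iota_k$. It therefore suffices to show that $p^m\Psi_X^n\iota_k\ne0$ for large $k$, since this forces $p^m\Psi_{X_k}^n\ne0$.

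To arrange this, I will factor the witness through a finite stage. The proof of Theorem~\ref{thm:nonvanishing} shows that $p^m\Psi_X^n\varphi\ne0$, where $\varphi:\Sigma^\infty_+BC_{p^2}\to X$ is induced by the equivariant embedding $C_{p^2}\hookrightarrow(S^1)^p$. The source $\Sigma^\infty_+BC_{p^2}$ is not compact, so $\varphi$ does not factor through any $X_k$ directly. I will instead restrict to finite skeleta. Let $B_r$ be a finite $C_p$-equivariant skeleton of $BC_{p^2}$, for example the $C_p$-invariant skeleta of the bar construction, on which $C_p$ acts through the automorphism $h\mapsto h^{1+p}$. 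The restriction $\varphi|_{B_r}$ is a $C_p$-map from a finite $C_p$-CW complex into $(BS^1)^p=\operatorname{colim}(\CP^k)^p$, so by compactness it lands in $(\CP^k)^p$ for some $k$, even at the space level and equivariantly. The next step is to show that for large $r$ the composite $p^m\Psi_X^n\varphi|_{\Sigma^\infty_+B_r}$ is still nonzero. Once that holds, this composite equals $p^m\Psi_X^n\iota_k\varphi_r$, which gives the claim.

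The main obstacle is the passage from $BC_{p^2}$ to its finite skeleta, that is, ruling out that the nonzero class $p^m\Psi_X^n\varphi$ is a phantom. I would handle it with a Milnor $\lim^1$ sequence in $\Fun(BC_p,\Sp)$, or in the genuine category, for $[\operatorname{hocolim}_r\Sigma^\infty_+B_r,X]^{C_p}$. A more hands-on alternative is to rerun the detection map $\Xi$ of \eqref{eq:xi} on finite skeleta. The coordinates $\theta_j$ only use the bottom-sphere classes of the MSZ summands indexed by $\rho=g^j\varphi$. The dual statement that matters is that $\varphi$ restricted to the bottom cell and low skeleta already records the distinct homomorphisms $g^j\varphi$: on $H^*$ or on $K$-theory, the classes $\varphi^*(x_i)$ in $H^2(BC_{p^2})$ are distinct and are detected on the $2$-skeleton. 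Concretely, I would compose with the map to $p$-complete $K$-theory, or to $H\Z_p$-cohomology with the $C_p$-action recorded via $\Z_p[C_p]$-valued Euler classes. The aim is a homomorphism that factors through $[\Sigma^\infty_+B_r,X]^{C_p}$ for some fixed finite $r$ and sends $p^m\Psi^n\varphi$ to $p^{m+n}(1-g)^n\ne0$, in the same way as \eqref{eq:final}.

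If that cohomological detection works, the argument is complete. The Burnside factor still has to be handled, and I would do this by noting that any additive invariant factoring through the geometric-fixed-point map $\Phi$ sends $p-[C_p/e]$ to $p$. If it proves awkward, the fallback is the $\lim^1$ route. Here the groups $[\Sigma^\infty_+B_r,X]^{C_p}$, after $p$-completion, are finitely generated $\Z_p$-modules because the Segal/MSZ description applies skeleton-wise. Finitely generated $\Z_p$-modules are compact, so the Mittag-Leffler condition holds and $\lim^1$ vanishes. Consequently a nonzero map from $\Sigma^\infty_+BC_{p^2}$ has a nonzero restriction to some $\Sigma^\infty_+B_r$, and the factorization through $X_k$ above finishes the proof.
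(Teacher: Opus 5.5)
Your reduction is sound as far as it goes. Naturality gives $\iota_k\,p^m\Psi_{X_k}^n=p^m\Psi_X^n\iota_k$, and a map out of a finite skeleton $\Sigma^\infty_+B_r$ does factor through some $X_k$. The gap is the step you call the main obstacle: showing that $p^m\Psi_X^n\varphi$ stays nonzero on some $\Sigma^\infty_+B_r$. Neither of your routes establishes it.

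The first route cannot work with the invariants you name, for three reasons. First, the underlying map of $\Psi$ is null, so the classes $\varphi^*(x_i)\in H^2(BC_{p^2})$ cannot see it. Second, an invariant factoring through $\Phi^{C_p}$ does send $p-[C_p/e]$ to $p$, but it also sends $1-g$ to $0$, because $g$ acts trivially on geometric fixed points. Third, after smashing with $H\Z_p$ in $\Fun(BC_p,\Sp)$, the class $[C_p/e]$ acts through its image $\mathrm{tr}\circ\mathrm{res}(1)=p$ in $H^0(C_p;\Z_p)$, so $H\Z_p\wedge\Psi=0$ for every $X$.

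The detector $\Xi$ in the proof of Theorem~\ref{thm:nonvanishing} works only because of two inputs. The $C_p$-equivariant May--Snaith--Zelewski splitting of the whole mapping spectrum $F(\Sigma^\infty_+BC_{p^2},X)^\wedge_p$ separates the summands indexed by $g^j\varphi$. And $\Phi$ is applied only to the coefficient ring $A(C_p)^\wedge_p\cong\pi_0((S^\wedge_p)^{hC_p})$ supplied by the Segal conjecture. Neither input exists for a finite skeleton $B_r$.

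Your fallback is the right kind of argument, but it is justified incorrectly.
\begin{itemize}
\item Finite generation of $[\Sigma^\infty_+B_r,X]^{C_p}$ does not follow ``because the Segal/MSZ description applies skeleton-wise.'' Those results concern $BG$, not its skeleta. Finite generation is true for a different reason: $B_r$ is a finite $C_p$-CW complex, and $X$ has finitely generated equivariant homotopy groups by the tom Dieck splitting.
\item Finite generation alone does not kill $\lim^1$ integrally; consider $\cdots\xrightarrow{p}\Z\xrightarrow{p}\Z$.
\item If you want to use compactness, you must map into $X^\wedge_p$. You then need to check that $p^m\Psi^n\varphi$ stays nonzero there; it does, since $\Xi$ depends only on the completed adjoint. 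Compactness gives $\lim^1=0$, but Mittag--Leffler itself can fail.
\item If you stay integral, you need the groups $[\Sigma\Sigma^\infty_+B_r,X]^{C_p}$ to be torsion. This can fail for odd $r$, because $B_r$ carries top-dimensional rational homology.
\end{itemize}

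The detour through $\varphi$ is unnecessary in any case. Theorem~\ref{thm:nonvanishing} already gives $p^m\Psi_X^n\ne0$ in $[X,X]^{C_p}$, so the paper applies the Milnor sequence directly to $X=\operatorname{hocolim}_kX_k$ as the source. The terms $[\Sigma X_k,X]^{C_p}$ are finitely generated, by tom Dieck splitting and cell induction. They are torsion by the Greenlees--May rational splitting: all underlying spaces and fixed-point diagonals have even rational homology, so the extra suspension forces the rational groups to vanish. Hence these terms are finite and $\lim^1=0$. Therefore $[X,X]^{C_p}\to\varprojlim_k[X_k,X]^{C_p}$ is injective, so $p^m\Psi_X^n\iota_{k_0}\ne0$ for some $k_0$, and your naturality step finishes the proof.
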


\begin{proof}
The Milnor exact sequence for the displayed homotopy colimit is
\[
\begin{aligned}
0\longrightarrow {}&
 \varprojlim\nolimits_k^{1}
 \bigl[\Sigma\Sigma^\infty_+(\mathbb CP^k)^p,
       \Sigma^\infty_+(BS^1)^p\bigr]^{C_p}\\
\overset{j}{\longrightarrow} {}&
 \bigl[\Sigma^\infty_+(BS^1)^p,
       \Sigma^\infty_+(BS^1)^p\bigr]^{C_p}\\
\overset{i}{\longrightarrow} {}&
 \varprojlim_k
 \bigl[\Sigma^\infty_+(\mathbb CP^k)^p,
       \Sigma^\infty_+(BS^1)^p\bigr]^{C_p}
 \longrightarrow0.
\end{aligned}
\]
{To show that $i$ is injective, it suffices to show that the groups
\(\bigl[\Sigma\Sigma^\infty_+(\mathbb CP^k)^p,
       \Sigma^\infty_+(BS^1)^p\bigr]^{C_p}\)
are finitely generated and torsion for all $k$. An inverse system
of finite groups satisfies the Mittag--Leffler condition, so its
\(\varprojlim\nolimits^{1}\) vanishes.}

Using the tom Dieck splitting \cite[Theorem 4.1]{MSZ}, $\pi_n^{C_p}(\Sigma^\infty_+(BS^1)^p)$ and $\pi_n^e(\Sigma^\infty_+(BS^1)^p)$
are finite direct sums of some classical stable homotopy groups of spaces of finite
type, so these homotopy groups are finitely generated.
Since \(\Sigma\Sigma^\infty_+(\mathbb CP^k)^p\) is a finite \(C_p\)-spectrum, induction over
its orbit cells shows that the mapping groups are finitely
generated as well.

It remains to prove torsion. Since each \(\Sigma\Sigma^\infty_+(\mathbb CP^k)^p\) is finite, consider tensoring the mapping group with $\mathbb Q$.
The underlying spaces \((\mathbb CP^k)^p\), \((BS^1)^p\) and
their fixed-point diagonals \(\mathbb CP^k\), \(BS^1\) all have
{rational homology concentrated in even degrees.
By the rational equivariant splitting~\cite[Appendix~A, Theorem~A.4]{GM},
the extra suspension on \(\Sigma^\infty_+(\mathbb CP^k)^p\)
makes both classical rational mapping groups zero.} It follows that the mapping groups \(\bigl[\Sigma\Sigma^\infty_+(\mathbb CP^k)^p,
       \Sigma^\infty_+(BS^1)^p\bigr]^{C_p}\) are torsion.

This proves that $i$ is injective. By Theorem~\ref{thm:nonvanishing}, for each
fixed \(m\ge0\) and \(n\ge1\), there is an integer \(k_0\ge0\)
such that the composite
\[
 \Sigma^\infty_+(\mathbb CP^{k_0})^p
 \xrightarrow{\mathrm{inc}}\Sigma^\infty_+(BS^1)^p
 \xrightarrow{p^m \Psi^n}\Sigma^\infty_+(BS^1)^p
\]
is nonzero. For every \(k\ge k_0\), this composite factors through
the self-map \(p^m \Psi^n\) of \(\Sigma^\infty_+(\mathbb CP^k)^p\),
since the inclusions commute with \(\Psi\). This self-map is
nonzero for every \(k\ge k_0\).
\end{proof}

Applying Lemma~\ref{lem:induction}, these finite examples can be extended to arbitrary finite groups while preserving their nonzero powers.

\begin{corollary}\label{cor:finite-groups}\label{cor:long-refinements}
Let \(G\) be a finite group containing \(C_p\). On
\(\Ind_{C_p}^G\Sigma^\infty_+(\mathbb CP^k)^p\), the ghost \(\Ind_{C_p}^G \Psi\)
{has zero underlying map. For every fixed \(m\ge0\) and \(n\ge1\),
we have}
\[
 p^m\bigl(\Ind_{C_p}^G \Psi\bigr)^n\ne0
\]
for all sufficiently large \(k\).
Consequently every nontrivial finite group admits ghosts with
arbitrarily long nonzero powers on finite spectra.
\end{corollary}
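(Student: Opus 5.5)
The plan is to deduce everything from Theorem~\ref{thm:finite-stages} together with Lemma~\ref{lem:induction}, applied to the subgroup of order $p$.

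\textbf{Underlying map.} Identify a subgroup $K\leq G$ of order $p$ with $C_p$, which exists by Cauchy's theorem. By Proposition~\ref{prop:natural-ghost}, taking $H=e$ and $V=0$, the map $\Res_e^{C_p}\Psi$ is null. I then apply Lemma~\ref{lem:induction}(1), which turns the full family of vanishing statements for $\Psi$ into the same vanishing for $\Ind_{C_p}^G\Psi$ at every subgroup of $G$. In particular, at the subgroup $e$ with $W=0$ this says the underlying map of $\Ind_{C_p}^G\Psi$ is null. One can also see this directly: $\Res_e^G\Ind_{C_p}^G$ is a finite wedge of copies of the underlying map of $\Psi$, which is zero. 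The ghost property of $\Ind_{C_p}^G\Psi$ is exactly Corollary~\ref{cor:induced-ghost}.

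\textbf{Nonvanishing of powers.} Induction is an additive functor, so
\[
 p^m\bigl(\Ind_{C_p}^G\Psi\bigr)^n=\Ind_{C_p}^G\bigl(p^m\Psi^n\bigr).
\]
Fix $m\geq0$ and $n\geq1$. Theorem~\ref{thm:finite-stages} gives $p^m\Psi^n\neq0$ on $\Sigma^\infty_+(\CP^k)^p$ for all sufficiently large $k$. Lemma~\ref{lem:induction}(3) says induction preserves nonzero maps, so the induced map is nonzero for the same $k$. The only point needing care is that Lemma~\ref{lem:induction}(3) applies to an arbitrary map, not only to ghosts. Its proof, which exhibits $f$ as the identity double-coset block of $\Res\Ind f$, uses no hypothesis on $f$, so this causes no difficulty.

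\textbf{Final sentence.} Every nontrivial finite $G$ has a prime $p$ dividing $|G|$, and hence, by Cauchy's theorem, a subgroup isomorphic to $C_p$. The spectrum $\Ind_{C_p}^G\Sigma^\infty_+(\CP^k)^p$ is a finite $G$-spectrum, since induction carries $C_p$-orbit cells to $G$-orbit cells. Choosing $k$ large for a given $n$ therefore gives a ghost on a finite $G$-spectrum whose $n$-th power, even after multiplying by $p^m$, is nonzero.

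I expect no real obstacle here. The substantive input, the detection of powers at finite stages, is already Theorem~\ref{thm:finite-stages}.
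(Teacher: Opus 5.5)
Your proposal is correct and follows the paper's proof: the ghost and underlying-vanishing assertions come from Proposition~\ref{prop:natural-ghost} with Lemma~\ref{lem:induction}(1), which is exactly Corollary~\ref{cor:induced-ghost}. The nonvanishing comes from writing $p^m(\Ind_{C_p}^G\Psi)^n=\Ind_{C_p}^G(p^m\Psi^n)$ and combining Theorem~\ref{thm:finite-stages} with Lemma~\ref{lem:induction}(3), and the final assertion uses the existence of a subgroup of prime order.
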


\begin{proof}
Corollary~\ref{cor:induced-ghost} gives the ghost and underlying
vanishing assertions. Induction preserves finite spectra and
detects nonzero maps by Lemma~\ref{lem:induction}. Since
\[
 \Ind_{C_p}^G(p^m \Psi^n)
 =p^m\bigl(\Ind_{C_p}^G \Psi\bigr)^n,
\]
Theorem~\ref{thm:finite-stages} gives the finite-stage nonvanishing.
Every nontrivial finite group contains a subgroup of prime order,
which proves the final assertion.
\end{proof}

Corollary~\ref{cor:finite-groups} proves the integral nonvanishing
required in Theorem~\ref{thm:long-main}. We complete that theorem
by proving finite \(p\)-power additive order and nonvanishing after
\(p\)-localization.

\begin{proof}[Proof of Theorem~\ref{thm:long-main}]
Fix \(m\ge0\) and \(n\ge1\).
Choose \(C_p\le G\) and a sufficiently large \(k\) as in
Corollary~\ref{cor:finite-groups} for these \(m\) and \(n\).
{The ghost lemma~\cite[Theorem~3.5]{Christensen}
implies} that \(\Psi\) is nilpotent in the $C_p$-equivariant homotopy category. Using \eqref{eq:burnside-ring} and \(g^p=1\),
we obtain
\[
 p^p \Psi=\sum_{j=2}^p(-1)^j\binom pj p^{p-j}\Psi^j.
\]
Iteration gives \(p\)-power torsion, since the right side is
divisible by \(\Psi^2\). Moreover, induction preserves additive order by
Lemma~\ref{lem:induction}. 

{Mapping groups between finite $G$-spectra localize by tensoring
with $\Z_{(p)}$. The nonzero $p$-primary class
$p^m(\Ind_{C_p}^G\Psi)^n$ therefore remains nonzero after
$p$-localization. The ghost condition is also preserved by localization.}
\end{proof}

\begingroup
\section{Nonfullness and failure of classification}\label{sec:module-detection}

In this section, we use the constructions and results of Section~\ref{sec:long-ghosts}
to prove Theorem~\ref*{thm:main-results}\textup{(\ref*{item:main-nonfullness})}
and Theorem~\ref*{cor:same-modules}.

In the integral or $p$-local $G$-stable homotopy category, let
\begin{equation}\label{eq:coefficient-categories}
\begin{aligned}
 \mathcal C_{\Z}&=\operatorname{add}
   \{G/H_+\wedge S^n:H\leq G,\ n\in\Z\},\\
 \mathcal C_{\RO}&=\operatorname{add}
   \{G/H_+\wedge S^V:H\leq G,\ V\in\RO(G)\},\\
 \mathcal C_{\mathrm{all}}&=\operatorname{add}
   \{G_+\wedge_H S^W:H\leq G,\ W\in\RO(H)\}.
\end{aligned}
\end{equation}
Here $\operatorname{add}$ denotes the full subcategory of finite direct
sums of the displayed spectra, with a small skeleton understood.
In the $p$-local case, the detectors and their morphisms are $p$-localized.
For any of these categories, set
\begin{equation}\label{eq:homotopy-module}
 \operatorname{Mod}\mathcal C
   =\operatorname{Add}(\mathcal C^{\mathrm{op}},\mathrm{Ab}),
 \qquad h_{\mathcal C}(X)=[-,X]^G|_{\mathcal C}.
\end{equation}
Here $\operatorname{Add}(\mathcal C^{\mathrm{op}},\mathrm{Ab})$
denotes the category of additive functors from $\mathcal C^{\mathrm{op}}$
to the category $\mathrm{Ab}$ of abelian groups. The restriction to
$\mathcal C$ means that $h_{\mathcal C}(X)(P)=[P,X]^G$ for each
detector $P\in\mathcal C$, with maps in $\mathcal C$ acting by
precomposition. Module maps commute with these operations.
This is the presheaf formulation of
Bohmann--May~\cite[Section~1]{BohmannMay}. For the first two categories, it
agrees with modules over the corresponding graded sphere Green
functor~\cite[Sections~3--5]{LM}. When $\mathcal C$ is fixed, write
$h=h_{\mathcal C}$.

Exactness in $\operatorname{Mod}\mathcal C$ is computed at each detector.
A map $f$ satisfies $h(f)=0$ precisely when it is a $\mathcal C$-ghost.
Each $\mathcal C$ is closed under integer suspension and desuspension,
so every integer suspension of a ghost is again a ghost. Moreover,
each $\mathcal C$ contains the integer-suspended orbits, so $h$ reflects
isomorphisms by the equivariant Whitehead theorem
\cite[Definition~III.3.2 and Theorem~III.4.2]{MM}.
Fullness on finite spectra means that the natural map
\begin{equation}\label{eq:module-comparison}
 [X,Y]^G\longrightarrow
 \Hom_{\operatorname{Mod}\mathcal C}(h(X),h(Y))
\end{equation}
is surjective for every pair of finite spectra $X,Y$.

For any $\mathcal C$-ghost $w:X\to Z$, a distinguished triangle
\[
 X\xrightarrow{w}Z\xrightarrow{i}E\xrightarrow{\delta}\Sigma X
\]
gives a short exact sequence
\begin{equation}\label{eq:double-ghost-module-sequence}
 0\longrightarrow h(Z)\xrightarrow{h(i)}h(E)
 \xrightarrow{h(\delta)}h(\Sigma X)\longrightarrow0
\end{equation}
because $h(w)=h(\Sigma w)=0$. The following lemma makes the
splitting in Pirashvili--Redondo~\cite[Theorem~1]{PR} explicit
when $w$ is a composite of two ghosts.

\begin{lemma}\label{lem:double-ghost-splitting}
Let $u:X\to Y$ and $v:Y\to Z$ be $\mathcal C$-ghosts, and put
$w=vu$. Then the short exact sequence
\eqref{eq:double-ghost-module-sequence} associated to a distinguished
triangle completing $w$ splits in $\operatorname{Mod}\mathcal C$.
\end{lemma}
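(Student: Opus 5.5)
To split the sequence it suffices to produce a module map $s:h(E)\to h(Z)$ with $s\circ h(i)=\Id_{h(Z)}$. The plan is to build $s$ from the octahedral axiom applied to the factorization $w=vu$.

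Complete $u$, $v$ and $w$ to distinguished triangles:
\[
 X\xrightarrow{u}Y\xrightarrow{i_u}C_u\xrightarrow{\delta_u}\Sigma X,\qquad
 Y\xrightarrow{v}Z\xrightarrow{i_v}C_v\xrightarrow{\delta_v}\Sigma Y,\qquad
 X\xrightarrow{w}Z\xrightarrow{i}E\xrightarrow{\delta}\Sigma X.
\]
The octahedral axiom gives maps $\alpha:E\to C_v$ and $\beta:C_u\to E$ such that:
\begin{itemize}
\item $\alpha i=i_v$;
\item $\delta_v\alpha=(\Sigma i_u)\delta$;
\item $\beta$ fits into a distinguished triangle $C_u\xrightarrow{\beta}E\xrightarrow{\alpha}C_v\to\Sigma C_u$;
\item $i\,v=\beta\, i_u$ and $\delta\beta=\delta_u$.
\end{itemize}
Since $u$ and $v$ are ghosts, the triangles for $u$ and $v$ give short exact sequences of the form \eqref{eq:double-ghost-module-sequence}:
\[
 0\to h(Y)\xrightarrow{h(i_u)} h(C_u)\xrightarrow{h(\delta_u)} h(\Sigma X)\to 0,
 \qquad
 0\to h(Z)\xrightarrow{h(i_v)} h(C_v)\xrightarrow{h(\delta_v)} h(\Sigma Y)\to 0.
\]

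\textbf{Key step: $h(\alpha)$ factors through $h(i_v)$.} First note that $h(\delta_v\alpha)=h(\Sigma i_u)\,h(\delta)$. I would then show this composite vanishes.

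The image of $h(\delta):h(E)\to h(\Sigma X)$ is all of $h(\Sigma X)$. Precomposing with $h(\delta_u)$ gives a further factorization, because $h(\delta)h(\beta)=h(\delta_u)$ is surjective. So for any element $e$ of $h(E)$ there is $c$ in $h(C_u)$ with $h(\delta)(e)=h(\delta_u)(c)$. Then
\[
 h(\Sigma i_u)h(\delta)(e)=h(\Sigma i_u)h(\delta_u)(c)=h\bigl((\Sigma i_u)\delta_u\bigr)(c)=0,
\]
since consecutive maps in a triangle compose to zero. Hence $h(\delta_v)h(\alpha)=0$. By exactness of the $v$-sequence, $h(\alpha)$ lands in the image of the monomorphism $h(i_v)$, so $h(\alpha)=h(i_v)\circ s$ for a unique module map $s:h(E)\to h(Z)$. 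Naturality of $s$ in the detector variable is automatic, because $h(i_v)$ is a monomorphism of functors.

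\textbf{Checking the retraction.} We have
\[
 h(i_v)\,s\,h(i)=h(\alpha i)=h(i_v).
\]
Injectivity of $h(i_v)$ then gives $s\,h(i)=\Id_{h(Z)}$, so $h(i)$ is split and the sequence splits.

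\textbf{Main obstacle.} The only delicate point is pinning down the octahedral identities with correct signs, in particular $\delta_v\alpha=(\Sigma i_u)\delta$ and $\delta\beta=\delta_u$. Signs are harmless here, since only vanishing and surjectivity are used. Note also that surjectivity of $h(\delta_u)$ already gives what is needed: every element of $h(\Sigma X)$ is $h(\delta_u)(c)$ for some $c$, and $(\Sigma i_u)\delta_u=0$ kills it, so the intermediate use of $\beta$ can even be bypassed.
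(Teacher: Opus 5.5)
\textbf{The key step needs repair.} Your strategy is sound: factor $h(\alpha)$ through the monomorphism $h(i_v)$ to obtain a retraction of $h(i)$. However, the key step rests on a misstated octahedral identity, and as written it is not well-typed. The composite $(\Sigma i_u)\delta$ is undefined, because $\delta$ lands in $\Sigma X$ while $\Sigma i_u$ has source $\Sigma Y$. The same problem affects $(\Sigma i_u)\delta_u$, both in your vanishing computation and in your closing remark. This is a wrong map, not a sign, so your remark that signs are harmless does not cover it.

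The correct compatibility is $\delta_v\alpha=(\Sigma u)\delta$. It is the last square of the morphism of triangles $(u,\Id_Z,\alpha)$ from the $w$-triangle to the $v$-triangle. The map $\Sigma i_u$ belongs instead to the third map $(\Sigma i_u)\delta_v\colon C_v\to\Sigma C_u$ of the octahedral triangle.

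With the correct identity, the key step takes one line: $h(\delta_v)h(\alpha)=h(\Sigma u)h(\delta)=0$, because $\Sigma u$ is again a ghost. Your detour through $c\in h(C_u)$ becomes valid once $\Sigma i_u$ is replaced by $\Sigma u$, since $(\Sigma u)\delta_u=0$, but it is unnecessary. The rest of your argument is correct: $h(i_v)$ is injective because $h(v)=0$, $h(\alpha)$ factors uniquely as $h(i_v)s$, and injectivity then gives $s\,h(i)=\Id$.

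\textbf{Comparison with the paper.} Once repaired, the octahedral axiom and $\beta$ are superfluous. TR3 applied to $v\circ u=\Id_Z\circ w$ already gives $\alpha$ with $\alpha i=i_v$ and $\delta_v\alpha=(\Sigma u)\delta$. Your argument is then the formal dual of the paper's.

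The paper applies TR3 to $(\Id_X,v)$, mapping the $u$-triangle to the $w$-triangle, and obtains $t\colon D\to E$. Since $h(v)=0$, $h(t)$ kills the image of $h(Y)$. It therefore descends along the epimorphism $h(q)$, which is epi because $\Sigma u$ is a ghost, to a section of $h(\delta)$.

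You instead map the $w$-triangle to the $v$-triangle and obtain a retraction of $h(i)$. You use $\Sigma u$ to land in $\ker h(\delta_v)$ and $v$ for injectivity of $h(i_v)$. Both proofs are equally short and use each ghost hypothesis once. The paper's form directly supplies the section of $h(\delta)$ used in the nonfullness proof. Yours yields that section after the standard splitting lemma.
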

\begin{proof}
Complete $u$ to a distinguished triangle. Since $w=vu$, the identity of
$X$ and the map $v$ extend to a morphism of triangles
\[
\begin{tikzcd}[column sep=large, row sep=large]
 X \arrow[r,"u"] \arrow[d,equal]
   & Y \arrow[r,"j"] \arrow[d,"v"]
   & D \arrow[r,"q"] \arrow[d,"t"]
   & \Sigma X \arrow[d,equal] \\
 X \arrow[r,"w"']
   & Z \arrow[r,"i"']
   & E \arrow[r,"\delta"']
   & \Sigma X .
\end{tikzcd}
\]
Applying $h$ gives the solid arrows in the following commutative
diagram with exact rows.
\[
\begin{tikzcd}[column sep=large, row sep=huge]
 0 \arrow[r]
   & h(Y) \arrow[r,"h(j)"] \arrow[d,"h(v)=0"']
   & h(D) \arrow[r,"h(q)"] \arrow[d,"h(t)"]
   & h(\Sigma X) \arrow[r] \arrow[d,equal]
       \arrow[dl,dashed,"s"']
   & 0 \\
 0 \arrow[r]
   & h(Z) \arrow[r,"h(i)"']
   & h(E) \arrow[r,"h(\delta)"']
   & h(\Sigma X) \arrow[r]
   & 0 .
\end{tikzcd}
\]
Exactness and $h(t)h(j)=h(i)h(v)=0$ give the dashed module map
$s$ with $s\,h(q)=h(t)$. Since $\delta t=q$, we have
$h(\delta)s\,h(q)=h(q)$. Surjectivity of $h(q)$ then gives
$h(\delta)s=\Id$.
\end{proof}

\begin{proof}[Proof of Theorem~\ref*{thm:main-results}\textup{(\ref*{item:main-nonfullness})}]
Corollary~\ref{cor:finite-groups} gives a finite integral $G$-spectrum
$Y$ with a representation-orbit ghost $\Psi_Y:Y\to Y$ such that
$\Psi_Y^2\ne0$. Theorem~\ref{thm:long-main} gives such a pair in the
$p$-local category as well. In either category, choose a distinguished triangle
\[
 Y\xrightarrow{\Psi_Y^2}Y\xrightarrow{i}E\xrightarrow{\delta}\Sigma Y.
\]
Fix any of the three coefficient categories $\mathcal C$.
Lemma~\ref{lem:double-ghost-splitting} gives a section
$s:h(\Sigma Y)\to h(E)$ of $h(\delta)$.
To prove nonfullness, it suffices to show that $s$ is not induced
by a stable map.
If $s$ were induced by a stable map $r:\Sigma Y\to E$, then
$h(\delta r)=\Id$, so $\delta r$ would be invertible because $h$
reflects isomorphisms. But $(\Sigma \Psi_Y^2)\delta r=0$, forcing
$\Psi_Y^2=0$, a contradiction.
\end{proof}

In the notation of Lemma~\ref{lem:double-ghost-splitting}, the
splitting already gives $h(E)\cong h(Z\vee\Sigma X)$.
For Theorem~\ref{cor:same-modules}, we must also arrange that
$E\not\simeq Z\vee\Sigma X$. We will do so by constructing a
nonzero double ghost on a finite spectrum annihilated by a power
of $p$ and comparing the orders of the resulting finite mapping groups.

\begin{proof}[Proof of Theorem~\ref*{cor:same-modules}]
Work integrally and choose a subgroup $C_p\leq G$.
Theorem~\ref{thm:finite-stages} and the proof of
Theorem~\ref{thm:long-main} give a finite integral $C_p$-spectrum $Y$
on which the natural ghost of Proposition~\ref{prop:natural-ghost}
satisfies
\[
 \Psi_Y^3\ne0,\qquad p^r \Psi_Y=0
\]
for some $r\geq1$. Set $Z=Y\wedge S/p^r$, which fits into the
distinguished triangle
\[
 Y\xrightarrow{p^r}Y\xrightarrow{j}Z
   \xrightarrow{\partial}\Sigma Y.
\]
We claim that $j\Psi_Y^2\ne0$. Otherwise, exactness of $[Y,-]^{C_p}$
would give $\Psi_Y^2=p^r v$ for some $v:Y\to Y$, which implies
\[
 \Psi_Y^3=(p^r v)\Psi_Y=v(p^r \Psi_Y)=0,
\]
a contradiction. Naturality gives
\[
 \Psi_Z^2j=j\Psi_Y^2\ne0,
\]
so the ghost $\Psi_Z$ on $Z$ also has nonzero square.

Since $(p^r\Id_Z)j=jp^r=0$, exactness shows that $p^r\Id_Z$
factors through $\partial$. Multiplying this factorization by $p^r$
and using $(\Sigma p^r)\partial=0$ gives $p^{2r}\Id_Z=0$.

Induce the pair $(Z,\Psi_Z)$ to $G$. By an abuse of notation, we also
denote the induced pair by $(Z,\Psi_Z)$. By
Lemma~\ref{lem:induction}, $Z$ is finite, $\Psi_Z$ is a
representation-orbit ghost, and $\Psi_Z^2\ne0$. Induction also preserves
$p^{2r}\Id_Z=0$. Every integer prime to $p$ therefore acts
invertibly on $Z$, so $Z$ is already $p$-local.

Complete $\Psi_Z^2$ to a distinguished triangle
\begin{equation}\label{eq:classification-cofiber}
 Z\xrightarrow{\Psi_Z^2}Z\xrightarrow{i}A\xrightarrow{\delta}\Sigma Z
\end{equation}
and set $B=Z\vee\Sigma Z$. Both $A$ and $B$ are finite and
$p$-local, since cofibers, suspensions, and finite wedges preserve
these properties.

Take $h=h_{\mathcal C_{\mathrm{all}}}$.
Lemma~\ref{lem:double-ghost-splitting} gives
\[
 h(A)\cong h(Z)\oplus h(\Sigma Z)\cong h(B).
\]
Evaluating this module isomorphism at
$G_+\wedge_L S^{\Res_L^H W}$ for $L\leq H$ gives isomorphisms
\[
 \underline{\pi}_W^H(A)\cong\underline{\pi}_W^H(B)
 \qquad(H\leq G,\ W\in\RO(H))
\]
compatible with all stable operations.

We now show that $A\not\simeq B$ by comparing the orders of $[Z,A]^G$
and $[Z,B]^G$. The tom Dieck splitting~\cite[Theorem~V.11.1]{LMS}
expresses the integer-graded homotopy groups of orbit spectra, at
every subgroup, as finite sums of classical stable homotopy groups
of spaces of finite type. These groups are finitely generated.
Induction over orbit cells in the source and target, followed by
passage to retracts, therefore shows that mapping groups between
finite integral $G$-spectra are finitely generated. Since $p^{2r}\Id_Z=0$, all
mapping groups below are also annihilated by $p^{2r}$, so they are finite.

Applying $[Z,-]^G$ to \eqref{eq:classification-cofiber} gives
\[
 [Z,Z]^G\xrightarrow{(\Psi_Z^2)_*}[Z,Z]^G\longrightarrow[Z,A]^G
 \longrightarrow[Z,\Sigma Z]^G
 \xrightarrow{(\Sigma \Psi_Z^2)_*}[Z,\Sigma Z]^G.
\]
Consequently,
\[
 \begin{aligned}
 |[Z,A]^G|
   &=|\operatorname{coker}(\Psi_Z^2)_*|\,|\ker(\Sigma \Psi_Z^2)_*|\\
   &<|[Z,Z]^G|\,|[Z,\Sigma Z]^G|=|[Z,B]^G|.
 \end{aligned}
\]
The strict inequality holds because $(\Psi_Z^2)_*(\Id_Z)=\Psi_Z^2\ne0$.
This proves that $A\not\simeq B$.
\end{proof}

\par
\endgroup

\begingroup
\section{Homological consequences}\label{sec:homological}

We prove Theorem~\ref*{cor:homological} by applying homological dimension
bounds to the ghost powers of Theorem~\ref*{thm:long-main}.
Throughout this section we work $p$-locally. Let $\mathcal C$ be one
of the coefficient categories in \eqref{eq:coefficient-categories},
and write $h=h_{\mathcal C}$.

Recall that a right $\mathcal C$-module $M$ is flat if
$M\otimes_{\mathcal C}-$ is exact on left $\mathcal C$-modules,
that is, additive functors $\mathcal C\to\mathrm{Ab}$.
The projective dimension $\operatorname{pd}_{\mathcal C}M$ and flat
dimension $\operatorname{fd}_{\mathcal C}M$ are the least lengths of
projective and flat resolutions of $M$, respectively. Dually, the
injective dimension $\operatorname{id}_{\mathcal C}M$ is the least
length of an injective coresolution. A dimension is infinite if
no finite resolution of the indicated kind exists. The weak global
and global dimensions are
\[
 \begin{aligned}
 \operatorname{w.gl.dim}(\operatorname{Mod}\mathcal C)
   &=\sup_M\operatorname{fd}_{\mathcal C}M,\\
 \operatorname{gl.dim}(\operatorname{Mod}\mathcal C)
   &=\sup_M\operatorname{pd}_{\mathcal C}M
    =\sup_M\operatorname{id}_{\mathcal C}M,
 \end{aligned}
\]
where $M$ ranges over all right $\mathcal C$-modules
\cite{Weibel}.

\begin{proof}[Proof of Theorem~\ref*{cor:homological}]
{We use a coefficient-category version of the bounds of
Christensen~\cite{Christensen} and
Hovey--Lockridge~\cite{HL-ghost,HL-dimensions}, recalled for graded
rings in Ma--Xu~\cite[Proposition~6.2]{MX}.
If a composite of $n\geq1$ $\mathcal C$-ghosts from a finite
$G$-spectrum $X$ to a $G$-spectrum $Y$ is nonzero, then
\[
 \operatorname{fd}_{\mathcal C}h(X),\qquad
 \operatorname{pd}_{\mathcal C}h(X),\qquad
 \operatorname{id}_{\mathcal C}h(Y)\geq n.
\]
Each $\mathcal C$ is small, additive, and closed under integer suspensions,
and its objects are compact and generate the ambient category.
If $h(F)$ is flat, co-Yoneda and thick induction show that every
map from a finite spectrum to $F$ factors through a finite sum of
objects of $\mathcal C$. Every ghost out of $F$ is therefore phantom.
If $\operatorname{fd}_{\mathcal C}h(X)<n$, the $(n-1)$st stage of a
universal ghost tower has flat homotopy module, up to suspension.
Every composite of $n$ ghosts out of $X$ factors through a ghost
out of that stage and is zero when $X$ is finite. This proves the flat
bound, and $\operatorname{fd}\leq\operatorname{pd}$ gives the
projective bound. The injective bound follows by dual dimension
shifting, using Brown representability to realize injective
$\mathcal C$-modules.}

For every $n\geq1$, Theorem~\ref*{thm:long-main} with $m=0$ gives a
finite $p$-local $G$-spectrum $Y_n$ and an endomorphism $f$ that is a
ghost for all three families, with $f^n\ne0$.
Applying the bound with $X=Y=Y_n$ to each coefficient category proves
that all three homotopy modules have flat, projective, and injective
dimension at least $n$. Taking suprema over $n$ shows that all three
module categories have infinite weak global dimension and infinite
global dimension.
\end{proof}

\begin{remark}
The four-cell examples of Theorem~\ref*{thm:main} already give
non-injective homotopy modules. Theorem~\ref*{cor:homological} further
shows that projective and injective dimensions are unbounded among the
homotopy modules of finite spectra. It does not assert that a single
finite spectrum has a homotopy module of infinite injective dimension.
\end{remark}

\begin{remark}
{Infinite ghost dimension of the ordinary $p$-local sphere is
already known \cite[Proposition~2.1 and Corollary~2.2]{HL-dimensions}.
For every $n$, this gives a nonzero composite of $n$ ghosts
with finite source, although the intermediate spectra and the
target need not be finite. Induction from the trivial subgroup
preserves nonvanishing and produces ghosts for all three detector
families. The bounds above already give unbounded flat and
projective dimensions among the homotopy modules of finite
$G$-spectra, and infinite weak global and global dimensions for
all three coefficient categories. Our construction additionally
gives arbitrarily long nonzero powers of a ghost endomorphism
of a finite spectrum. These powers provide simultaneous lower
bounds on the flat, projective, and injective dimensions of all
three homotopy modules of the same finite spectrum.}
\end{remark}

\par
\endgroup

\raggedbottom

\bibliographystyle{alpha}
\bibliography{ref}
\end{document}